\newtheorem{theorem}{\bf Theorem}[section]
\newtheorem{claim}[theorem]{\bf Claim}
\newtheorem{lemma}[theorem]{\bf Lemma}
\newtheorem{conjecture}[theorem]{\bf Conjecture}
\newtheorem{prop}[theorem]{\bf Proposition}
\newtheorem{corollary}[theorem]{\bf Corollary}
\newtheorem{cor}[theorem]{\bf Corollary}
\newtheorem{question}[theorem]{\bf Question}
\newcommand{\secref}[1]{\hyperref[#1]{Section~\ref*{#1}}}
\newcommand{\thmref}[1]{\hyperref[#1]{Theorem~\ref*{#1}}}
\newcommand{\lemref}[1]{\hyperref[#1]{Lemma~\ref*{#1}}}
\newcommand{\propref}[1]{\hyperref[#1]{Proposition~\ref*{#1}}}
\newcommand{\corref}[1]{\hyperref[#1]{Corollary~\ref*{#1}}}
\newcommand{\defref}[1]{\hyperref[#1]{Definition~\ref*{#1}}}
\newcommand{\obsref}[1]{\hyperref[#1]{Observation~\ref*{#1}}}
\newcommand{\conjref}[1]{\hyperref[#1]{Conjecture~\ref*{#1}}}
\newcommand{\claref}[1]{\hyperref[#1]{Claim~\ref*{#1}}}
\newcommand{\figref}[1]{\hyperref[#1]{Figure~\ref*{#1}}}
\let\eps=\varepsilon
\newcommand{\cF}{\mathcal{F}}
\newcommand{\Real}{\mathbb{R}}
\newcommand{\Nat}{\mathbb{N}}
\newcommand{\Z}{\mathbb{Z}}
\DeclareMathOperator{\Stab}{Stab}
\let\Olfloor\lfloor
\let\Orfloor\rfloor
\let\Olceil\lceil
\let\Orceil\rceil
\renewcommand{\lfloor}{\left\Olfloor}
\renewcommand{\rfloor}{\right\Orfloor}
\renewcommand{\lceil}{\left\Olceil}
\renewcommand{\rceil}{\right\Orceil}
\begin{document}

\title{Degree conditions forcing directed cycles\thanks{This work was partially supported by the National Science Centre grant 2016/21/D/ST1/00998 and the ERC Consolidator Grant LaDIST 648509.}}
\author{
Andrzej Grzesik\thanks{Faculty of Mathematics and Computer Science, Jagiellonian University, ul.~Prof.~St.~\L ojasiewicza 6, 30-348 Krak\'{o}w, Poland. E-mail: {\tt Andrzej.Grzesik@uj.edu.pl}.}\and
Jan Volec\thanks{Department of Mathematics, Faculty of Nuclear Sciences and Physical Engineering, Czech Technical University in Prague, Trojanova 13, 120 00 Prague, Czech Republic. E-mail: {\tt jan@ucw.cz}.}
}

\date{}

\maketitle

\begin{abstract}
Caccetta--H\"aggkvist conjecture is a longstanding open problem on degree conditions that force an oriented graph to contain a directed cycle of a bounded length.
Motivated by this conjecture, Kelly, K\"uhn, and Osthus initiated a study of degree conditions forcing the containment of a directed cycle of a given length.
In particular, they found the optimal minimum semidegree, that is, the smaller of the minimum indegree and the minimum outdegree, which forces a large oriented graph to contain a directed cycle of a given length not divisible by $3$, and conjectured the optimal minimum semidegree for all the other cycles except the directed triangle. 

In this paper, we establish the best possible minimum semidegree that forces a large oriented graph to contain a directed cycle of a given length 
divisible by $3$ yet not equal to~$3$, 
hence fully resolve the conjecture by Kelly, K\"uhn, and Osthus.
We also find an asymptotically optimal semidegree threshold of any cycle with a given orientation of its edges with the sole exception of a directed triangle.
\end{abstract}

\maketitle

\section{Introduction}\label{sec:intro}

One of the most famous open problems in graph theory is the conjecture of Caccetta and H\"aggkvist from 1978.
\begin{conjecture}[Caccetta--H\"aggkvist~\cite{CacH78}]\label{conj:CH}
Every oriented graph $G$ on $n$ vertices with minimum outdegree $\delta^+(G) \ge \frac{n}{\ell}$ contains a directed cycle of length at most $\ell$.
\end{conjecture}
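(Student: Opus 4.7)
This is the long-standing Caccetta--H\"aggkvist conjecture, so the ``plan'' below should be read as a description of the natural angles of attack rather than a route known to succeed. The standard setup is by contradiction: suppose $\delta^+(G)\ge n/\ell$ but every directed cycle has length $>\ell$. The first move is to track the iterated out-neighbourhoods $N_1(v):=N^+(v)$ and $N_{i+1}(v):=N^+(N_i(v))\setminus\bigl(\{v\}\cup N_1(v)\cup\cdots\cup N_i(v)\bigr)$. The hypothesis that no short cycle exists forces, for each $v$, that $v\notin N_i(v)$ for $i\le \ell-1$, while the minimum outdegree pushes $\sum_i |N_i(v)|$ upward. Summing over $v$ and applying a convexity or double-counting step should yield a contradiction with $|V(G)|=n$, provided the growth of the $N_i(v)$ is fast enough.

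For the most important case $\ell=3$, the sharpest partial bound ($\delta^+(G)<0.3465n$, by Hlad\'ky--Kr\'al'--Norin) comes from a flag algebra computation on densities of small oriented subgraphs. A natural strategy to close the remaining gap is twofold: first, augment the flag algebra by additional Cauchy--Schwarz certificates coming from longer-range neighbourhood counts; second, pair this with a stability result identifying the near-extremal blow-ups of $\vec{C}_3$ and handling those by a direct combinatorial argument. For general $\ell$ the analogue would take blow-ups of $\vec{C}_\ell$ as the extremal templates and try to show that any graph far from such a blow-up admits strictly more out-neighbourhood expansion than the uniform bound $n/\ell$.

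The main obstacle is well understood: every known averaging, spectral, or flag-algebra argument loses a multiplicative constant, and closing that gap appears to require a genuinely new inequality---most likely one that exploits the global structure of the graph and not merely local density counts. In particular, I expect the step of ``ruling out graphs close to a blow-up of $\vec{C}_\ell$'' to be the crux: since the sharp bound $n/\ell$ is attained only by such blow-ups, any proof must pass through a fine stability argument for them, and no such argument is presently available in full generality. Consequently, rather than attempting a head-on proof, the realistic short-term plan would be to target the weaker statement $\delta^+(G)\ge (\tfrac{1}{\ell}+\eps)n$ for an explicit small $\eps>0$, and to treat the conjecture itself as the ideal limit.
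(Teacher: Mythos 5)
The statement you were asked about is the Caccetta--H\"aggkvist conjecture itself, which this paper states only as motivation: it supplies no proof, and none exists in the literature. Your submission correctly recognizes this and is, by your own framing, a survey of possible lines of attack rather than a proof. There is therefore nothing in the paper to compare it against, and as a proof it is of course incomplete --- every step you describe (neighbourhood iteration, flag-algebra augmentation, stability) is explicitly conditional on inequalities that are not established.

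One substantive correction to your discussion: for $\ell=3$ the bound $n/3$ is \emph{not} attained only by blow-ups of the directed triangle. The conjectured extremal family is considerably richer (see Razborov's description referenced in \secref{sec:outro} of this paper), which is one of the reasons a clean ``stability around the blow-up'' strategy has not succeeded; any argument of the type you sketch would have to accommodate all of these configurations simultaneously. Your concluding suggestion --- to aim for $\delta^+(G)\ge(\tfrac{1}{\ell}+\eps)n$ with explicit small $\eps$ --- is exactly the direction the cited partial results \cites{Bon97,She98,HamHK07,HlaKN17} take for $\ell=3$, and it is the regime this paper actually exploits (via \thmref{thm:CH-C3} and \lemref{lem:girth}) to prove its own, different, theorems about cycles of a fixed length.
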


By an oriented graph, we understand a directed graph without loops and multiple edges.
Since the time the conjecture was stated, it attracted a great amount of researchers.
In particular, it was the topic of a 2006 workshop at the American Institute of Mathematics. 
Partial results on \conjref{conj:CH} include its proofs for large values of $\ell$ \cites{CacH78, Ham87, HoaR87, She00}, results with an additive error term in the bound for the cycle length \cites{ChvS83, Nis88, She02} and a multiplicative error term in the minimum outdegree assumption \cites{Bon97, CacH78, HamHK07, HlaKN17, LiaX12, She98}, as well as solutions with an additional assumption on forbidden subgraphs \cites{Grz17,Raz13}. 
For more results and problems related to the Caccetta--H\"aggkvist conjecture, see a summary of Sullivan~\cite{Sul06}.

A weaker conjecture, where $\frac{n}{\ell}$ lower bounds the minimum semidegree, that is, the smaller of the minimum outdegree and the minimum indegree, is also widely open and is very closely related to a conjecture of Behzad, Chartrand, and Wall~\cite{BehCW70} from 1970 on directed cages, that is, directed graphs that are regular and have a prescribed girth.

Motivated by these open problems, Kelly, K\"uhn, and Osthus \cite{KelKO10} conjectured the minimum semidegree guaranteeing existence of a directed cycle of a given length.

\begin{conjecture}[Kelly--K\"uhn--Osthus \cite{KelKO10}]\label{conj:main}
For every $\ell \ge 4$ there exists $n_0:=n_0(\ell)$ such that every oriented graph $G$ on $n\ge n_0$ vertices with $\delta^\pm(G) \ge \frac{n}{k} + \frac{1}{k}$ contains a directed cycle of length exactly~$\ell$, 
where $k$ is the smallest integer greater than $2$ that does not divide~$\ell$.
\end{conjecture}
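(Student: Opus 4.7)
The plan is to reduce to the case where $3 \mid \ell$ and $\ell \ge 6$, since the complementary case $\ell \not\equiv 0 \pmod 3$ was already settled by Kelly, K\"uhn and Osthus. Throughout, let $k\ge 4$ denote the smallest integer greater than $2$ not dividing $\ell$. The canonical extremal example in this regime is a balanced blow-up of the directed cycle $\vec{C}_k$: it has minimum semidegree $\lfloor n/k \rfloor$ and contains only directed cycles whose length is a multiple of $k$, hence none of length~$\ell$.

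I would first apply a directed Szemer\'edi regularity lemma to $G$, obtaining a reduced oriented graph $R$ whose edges encode $\eps$-regular pairs of density at least some fixed $d>0$, and whose minimum semidegree is asymptotically at least $|V(R)|/k$. A standard embedding lemma then guarantees that any closed directed walk of length exactly $\ell$ in $R$ that visits enough distinct vertices can be blown up to a genuine directed cycle of length $\ell$ in $G$, so the task reduces to locating such a walk in $R$.

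The core of the argument is a stability dichotomy. Either $R$ contains two short directed cycles sharing a vertex whose lengths have a small enough $\gcd$ to generate, by concatenation, closed walks of every sufficiently large length in the correct congruence class -- in particular one of length $\ell$ visiting many vertices -- or else $R$ is close to a balanced blow-up of $\vec{C}_k$, meaning that almost all edges respect a cyclic partition $V(R)=V_1\cup\cdots\cup V_k$ with edges running from $V_i$ to $V_{i+1 \bmod k}$. In the stability case the slack $+\,1/k$ in the semidegree hypothesis becomes decisive: a perfectly balanced $\vec{C}_k$-blow-up achieves only $\lfloor n/k \rfloor$, so the bound $\delta^\pm(G)\ge n/k+1/k$ forces a linear number of \emph{chord} edges, i.e., edges from some $V_i$ to some $V_j$ with $j-i \not\equiv 1 \pmod k$. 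Concatenating a chord with long paths inside the blow-up produces closed walks whose lengths, taken modulo $k$, depend on the residue shift of that chord; combining several chords with different shifts then allows one to realise every residue modulo $k$, and in particular to assemble a walk of length exactly $\ell$ that visits many distinct vertices.

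The main obstacle will be this last step: quantitatively pinning down how the slack $1/k$ interacts with the cyclic partition so as to guarantee chords realising not just \emph{some} but \emph{all} residues modulo $k$. This requires ruling out configurations in which every chord carries the same residue shift, by deriving a contradiction to the semidegree bound on an appropriate subgraph induced by a union of two consecutive classes; it is precisely at this step that the sharp constant $1/k$ in the Kelly-K\"uhn-Osthus conjecture is used in full strength, and I expect this to be the most delicate part of the argument.
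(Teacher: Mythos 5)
The statement you are trying to prove is, in the regime you reduce to, actually \emph{false}, and the paper's main contribution is precisely to disprove it: Theorem~\ref{thm:main} shows that for $3 \mid \ell$ the conjectured bound $\frac{n}{k}+\frac{1}{k}$ is correct only when $\ell \equiv 3 \bmod 12$, and that otherwise the true threshold is $\frac{n}{k}+\frac{k-1}{2k}$, which strictly exceeds $\frac{n}{k}+\frac{1}{k}$ for every $k\ge4$. The counterexamples are built in \secref{sec:constructions}: start from a balanced blow-up of $C_k$ on $n+\frac{k-3}{2}$ vertices (for odd $k$) and apply $\frac{k-3}{2}$ ``maneuvers'' that delete vertices while inserting chord edges whose residue shifts modulo $k$ are confined to $\{\frac{k-1}{2},\frac{k+1}{2}\}$, chosen so that no combination of the available shifts equals $\ell \bmod k$. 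The result is a $C_\ell$-free graph with $\delta^\pm = \frac{n}{k}+\frac{k-3}{2k} \ge \frac{n}{k}+\frac{1}{k}$ once $k\ge5$ (and an analogous construction with excess $\frac{k-2}{2k}$ handles $k$ a power of two).

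The gap in your argument is exactly the step you flagged as delicate. Your overall architecture (reduce to a structure close to a blow-up of $C_k$, then use chords to shift the cycle length modulo $k$) is essentially the paper's, which proves stability in \thmref{thm:kbig_walk} and then harvests ``sidewalks'' in \lemref{lem:kbig_sidewalks}. But the slack $+\frac{1}{k}$ does \emph{not} force chords realising all residues, nor even enough chords of restricted residues: summing semidegrees over a transversal of the $k$ blobs, an excess of $\frac{1}{k}$ per vertex yields only about $2$ extra adjacencies, whereas one needs $k-1$ usable sidewalks of nonzero value in order to invoke Kneser's theorem (\corref{cor:additive}) and guarantee a subset of shifts summing to $\ell \bmod k$ --- this is where the constant $\frac{k-1}{2k}$ comes from, since $2k\cdot\frac{k-1}{2k}=k-1$. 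There is no ``contradiction to the semidegree bound'' to be derived from all chords carrying the same (or two complementary) shifts at the $+\frac{1}{k}$ level; the maneuvers exhibit such configurations explicitly. So the final step of your plan cannot be repaired, and the theorem you should be aiming at is \thmref{thm:main} with the larger additive constant.
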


Observe that the conjectured semidegree threshold, if true, would be the best possible.
Indeed, consider a so-called balanced blow-up of $C_k$, that is, an oriented graph on $n = b\cdot k$ vertices split into $k$ cyclically ordered parts each of size~$b$, where the edges go from all the vertices of a given part to all the vertices of the next part.
The considered graph has no $\ell$-cycles and both the indegree and the outdegree of all its vertices equals to $b=\frac{n}{k}$.

In their paper, Kelly, K\"uhn and Osthus proved \conjref{conj:main} for $k=3$, that is, when the cycle length is not divisible by~$3$.
Moreover, they showed that the corresponding bound $(n+1)/3$ on the minimum semidegree force any cycle of a fixed length $\ell \ge 4$.
\begin{theorem}[\cite{KelKO10}]\label{thm:KelKO}
For every $\ell \ge 4$ there exists $n_0:=n_0(\ell)$ such that every oriented graph $G$ on $n\ge n_0$ vertices with $\delta^\pm(G) \ge \frac{n}{3} + \frac{1}{3}$ contains a directed cycle of length exactly~$\ell$.
\end{theorem}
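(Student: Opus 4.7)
My plan is a stability-type argument, where the balanced blow-up of $C_3$ plays the role of the extremal structure. Fix a small $\varepsilon > 0$ and assume $n$ is sufficiently large. I would begin by establishing a rich collection of base cycles: since $\delta^+(G), \delta^-(G) > n/3$, for most ordered pairs $(u,v)$ the joint neighbourhood $N^+(u) \cap N^-(v)$ is non-empty by double counting, and a refinement shows $G$ contains $\Omega(n^3)$ directed triangles. These triangles, together with the many directed $P_3$'s guaranteed by the semidegree hypothesis, are the building blocks from which longer cycles are constructed.

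Next I would invoke a stability dichotomy. Either (a) there exist pairwise disjoint sets $V_1,V_2,V_3 \subseteq V(G)$, each of size $(1/3 \pm \varepsilon)n$, covering all but $\varepsilon n$ vertices, such that at most $\varepsilon n^2$ edges of $G$ violate the cyclic pattern $V_1 \to V_2 \to V_3 \to V_1$; or (b) no such approximate tripartition exists. In case (b), I would apply the directed regularity lemma and use the ``mixing'' of $G$ to locate, between suitable pairs of vertices, directed paths whose lengths cover all residues modulo $3$; splicing such paths with the triangles produced above then yields a directed $\ell$-cycle for every $\ell \ge 4$.

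The heart of the proof is case (a), the quasi-blow-up case. For $\ell \equiv 0 \pmod 3$ with $\ell \ge 6$, the $\ell$-cycle can be built greedily inside the quasi-blow-up, using that each bipartite pair $(V_i, V_{i+1})$ is extremely dense and admits long directed paths. For $\ell \not\equiv 0 \pmod 3$, a pure blow-up of $C_3$ contains no such cycle, so the argument must exploit the strict inequality $\delta^\pm(G) > n/3$: a pure blow-up has semidegree exactly $n/3$, so a pigeonhole computation forces the existence of at least one non-conforming edge (either a backward edge from $V_{i+1}$ to $V_i$, or an edge inside some $V_i$). Such an edge alters the length residue modulo $3$ of any cycle through it by a fixed non-zero amount, so by combining it with a carefully chosen path of tunable length inside the quasi-blow-up one closes up an $\ell$-cycle for the prescribed $\ell$.

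The main obstacle is precisely the stability case (a). The surplus above $n/3$ is only $1/3$, which is essentially ``one extra edge per vertex,'' and one must exhibit the appropriate non-conforming edge at every step where a cycle must be closed, for every residue class of $\ell$ modulo $3$, while controlling both the $\varepsilon n$ ``bad'' vertices outside $V_1 \cup V_2 \cup V_3$ and the $\varepsilon n^2$ non-conforming edges globally. Ensuring that the length arithmetic works out exactly to $\ell$ uniformly in $\ell$, and that the small error terms from the stability approximation do not obstruct the construction, is the technical core of the argument.
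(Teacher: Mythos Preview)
This theorem is not proved in the present paper at all: it is quoted as a prior result of Kelly, K\"uhn and Osthus, and the paper's own arguments (Sections~\ref{sec:kbig}--\ref{sec:l6}) treat only the cycle-lengths with $k\ge4$, explicitly relying on~\cite{KelKO10} for the case $k=3$. So there is no ``paper's own proof'' to compare against.

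That said, your plan has a genuine gap at its very first step. You assert that $\delta^\pm(G)>n/3$ forces $\Omega(n^3)$ directed triangles, but this is false (or at least wide open): the balanced blow-up of $C_3$ has semidegree exactly $n/3$ and contains \emph{no} directed triangle, so a surplus of $1/3$ above $n/3$ cannot possibly produce a cubic number of them. Indeed, whether $\delta^\pm(G)\ge n/3$ forces even a single $C_3$ is exactly \conjref{conj:BCW:trg}, which the paper lists as open and regards as essentially as hard as the triangle case of Caccetta--H\"aggkvist. Since you use these triangles as the basic building blocks in both branches of your dichotomy, the whole architecture collapses.

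The dichotomy itself is also underspecified. In branch~(b) you appeal to ``mixing'' to obtain paths of all residues modulo~$3$, but being far from a blow-up of $C_3$ in edit distance does not by itself deliver this; you would need a concrete structural statement (of the kind the paper proves for $k\ge4$ via diameter bounds and forbidden shortcuts, Claims~\ref{cla:diameter} and~\ref{cla:shortcuts}) to make that step go through. In branch~(a), note that a single non-conforming edge shifts the residue by a fixed value in $\{\pm1\}$, so one such edge only reaches one of the two nonzero residues modulo~$3$; you must argue that either type of shift is available or that two compatible non-conforming edges exist, and the $+\tfrac13$ surplus gives very little room for this.
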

Note that if \conjref{conj:CH} is true for $\ell = 3$, then the bound $\frac{n+1}3$ on the minimum semidegree of a sufficiently large $n$-vertex oriented graph guarantees existence of any directed cycle of a fixed length.
In 1978, Thomassen~\cite{Tho81} actually conjectured that $n/3$ should also be the semidegree threshold for containing a Hamilton cycle, that is, a cycle going through all the vertices.
However, this was disproved about 15 years later by H\"aggkivst \cite{Hag93} who presented non-Hamiltonian $n$-vertex graphs with the minimum semidegree $\lceil \frac{3n-4}8 \rceil - 1$.
Building on an asymptotic solution of Kelly, K\"uhn, and Osthus~\cite{KelKO08}, Keevash, K\"uhn, and Osthus~\cite{KeeKO09} proved that any sufficiently large $n$-vertex graph with the minimum semidegree $\lceil \frac{3n-4}8 \rceil$ contains a Hamilton cycle.
In fact, the results in~\cites{KeeKO09, KelKO10} yield that $\lceil \frac{3n-4}8 \rceil$ is the semidegree threshold for an oriented graph on $n$ vertices to be \emph{pancyclic}, that is, contain directed cycles of every length between $3$ and $n$.

One of the natural ways to tackle \conjref{conj:main} is to consider its asymptotic relaxation, that is, adding an extra $o(n)$ term to the minimum semidegree assumption.
In~\cite{KelKO10}, Kelly, K\"uhn, and Osthus proved \conjref{conj:main} asymptotically for $k \in \{4,5\}$ with an additional assumption on the cycle lengths. Specifically, they assumed $\ell \ge 42$ and $\ell\ge2556$ when $k=4$ and $k=5$, respectively.
This was later extended by K\"uhn, Osthus, and Piguet~\cite{KuhOP13} to an asymptotic version of \conjref{conj:main} for any $k \ge 7$ and $\ell \ge 10^7k^6$.

\subsection{Our results}

In this paper, we determine the exact value of the semidegree threshold of every directed cycle of length $\ell \ge 4$ in large enough oriented graphs.
\begin{theorem}\label{thm:main}
Fix an integer $\ell \ge 4$ and let $k$ be the smallest integer greater than $2$ that does not divide $\ell$.
There exists $n_0:=n_0(\ell)$ such that the following is true for every oriented graph $G$ on $n\ge n_0$ vertices: 
\begin{itemize}
\item if $\ell \not\equiv 3$ mod $12$ and $\delta^\pm(G) \ge \frac{n}{k}+\frac{k-1}{2k}$, then $G$ contains $C_\ell$, and 
\item if $\ell \equiv 3$ mod $12$ and $\delta^\pm(G) \ge \frac{n}{4}+\frac{1}{4}$, then $G$ contains $C_\ell$.
\end{itemize}
Moreover, for every $\ell\ge4$ the stated bound is the best possible.
\end{theorem}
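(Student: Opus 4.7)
The plan is to combine an asymptotic semidegree threshold with a stability-type analysis on the positive side, and to exhibit explicit extremal constructions for tightness. Since the case $k=3$ is already covered by \thmref{thm:KelKO}, attention focuses on $\ell$ divisible by $3$, where $k \ge 4$.

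For the positive direction, I would first establish (or invoke, when available) the asymptotic version: for every $\eps > 0$, the condition $\delta^\pm(G) \ge n/k + \eps n$ forces $C_\ell$ in every sufficiently large oriented graph $G$. For $k \ge 7$ and $\ell$ at least polynomial in $k$ this is the theorem of K\"uhn, Osthus and Piguet; the remaining asymptotic cases (most notably $k \in \{4,5,6\}$ and small $\ell$ for larger $k$) would need to be settled separately via similar regularity-based machinery. With such an asymptotic result in hand, the proof proceeds by stability: if $G$ satisfies $\delta^\pm(G) \ge n/k + (k-1)/(2k)$ (respectively $n/4+1/4$ when $\ell \equiv 3 \bmod 12$) yet avoids $C_\ell$, then $V(G)$ must admit a partition into $k$ classes $V_0, \dots, V_{k-1}$ of size close to $n/k$ with all but $o(n^2)$ edges directed from $V_i$ to $V_{i+1}$. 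The surplus in the semidegree hypothesis forces many vertices to have non-forward neighbours; concatenating such an exceptional edge with a suitable number of forward paths along the $C_k$-backbone should yield a closed directed walk whose length can be tuned, by a residue argument modulo $k$, to equal $\ell$.

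For tightness, I would construct, for each pair $(k,\ell)$, a $C_\ell$-free oriented graph of semidegree equal to the claimed threshold minus one. The starting point is the balanced blow-up of $C_k$ with parts of size $\lfloor n/k \rfloor$ or $\lceil n/k \rceil$. To the blow-up one attaches a bounded backward gadget (for instance a short backward matching, or a blow-up of a carefully chosen small digraph) inside some of the classes, raising the minimum semidegree by the required additive term while certifying, via a direct inspection of cycle lengths in the resulting digraph, that no $C_\ell$ appears. The anomaly $\ell \equiv 3 \bmod 12$ reflects the fact that for such $\ell$ any construction attempting to reach $n/4+3/8$ would be forced to contain $C_\ell$; only the weaker bound $n/4+1/4$ is attainable, which I would realise by an explicit blow-up of a small digraph whose cycle spectrum can be controlled.

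The main obstacle is expected to be the stability step: converting a tiny excess over $n/k$ into a cycle of exactly length $\ell$ requires sharp residue-tracking of path lengths modulo $k$ inside a very rigid almost-blow-up structure. The combinatorial bookkeeping depends on the residue of $\ell$ modulo $\operatorname{lcm}(3,k)$, and in particular a single uniform argument covering all $\ell \ge 4$ seems unrealistic---a case analysis based on $\ell \bmod k$ and on which lengths of non-forward detours are available (producing the $\ell \equiv 3 \bmod 12$ dichotomy at $k=4$) will very likely be unavoidable.
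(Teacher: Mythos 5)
Your high-level architecture (asymptotic/stability statement, then exact extraction of $C_\ell$ from the rigid structure, then extremal constructions) matches the paper's, but the proposal leaves unproved precisely the parts that constitute the bulk of the work, and one of your structural assumptions is false. The asymptotic threshold you propose to ``invoke'' is only available in the literature for $k\ge7$ and $\ell\ge 10^7k^6$; the genuinely new exact cases are $k=4$ and $k=5$ (and $k\ge7$ with $\ell$ as small as $\mathrm{lcm}(1,\dots,k-1)$, e.g.\ $\ell=60$ for $k=7$), and for these no regularity-based asymptotic result exists to cite. The paper has to prove its own stability theorems (\thmref{thm:kbig_walk} and \thmref{thm:k4walk}) from scratch, and does so not by regularity but by combining Caccetta--H\"aggkvist-type girth bounds (\lemref{lem:girth}, plus two computer-assisted flag-algebra lemmas for $k\in\{4,5\}$), diameter bounds, and the exclusion of short ``shortcuts''; saying the remaining cases ``would need to be settled separately via similar machinery'' is naming the problem, not solving it.

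More seriously, your claimed stability conclusion --- a partition into $k$ classes of size close to $n/k$ with almost all edges forward --- is simply wrong when $k=4$ and $\ell$ is an odd multiple of $3$: any Eulerian orientation of $K_{n/2,n/2}$ has semidegree $n/4$ and no closed walk of odd length, so the extremal structures are bipartite rather than $C_4$-blow-ups, and an extra argument (using the specific value of $\ell$) is needed to recover a $C_4$-blow-up subgraph inside the bipartite structure. The case $\ell=6$ is worse still (a blow-up of $C_4$ with one blob replaced by a one-way oriented $K_{n/8,n/8}$ is $C_6$-free and far from bipartite) and requires an entirely separate direct argument, which your proposal does not mention. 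Finally, the exact extraction step is not a routine ``residue argument'': the paper needs Kneser's theorem to combine the values of the exceptional detours modulo $k$, and even then $\ell=12$ and $\ell=24$ demand ad hoc analyses because too few detours fit around the cycle. Your construction sketch for tightness is in the right spirit (blow-up of $C_k$ plus bounded gadgets), but without the explicit residue bookkeeping of which detour values each gadget contributes, it does not certify $C_\ell$-freeness.
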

In particular, the threshold conjectured in~\cite{KelKO10} is correct only when either $\ell$ is not divisible by $3$, or $\ell \equiv 3$ mod $12$.

When $k\ge5$, that is, when $\ell$ is divisible by $12$, we also prove a stability-type result.
\begin{theorem}\label{thm:kbig_walk}
Fix an integer $\ell$ that is divisible by $12$ and let $k$ be the smallest integer greater than~$2$ that does not divide $\ell$.
There exists $n_0:=n_0(\ell)$ such that every oriented graph $H$ on $n \ge n_0$ vertices with $\delta^\pm(H) \ge \frac{n}{k}\left(1-\frac{1}{30k}\right)$ that contains no closed walk of length~$\ell$ is homomorphic to $C_{k}$.
\end{theorem}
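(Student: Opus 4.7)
The plan is to show that $H$ must be strongly connected with period exactly $k$; the homomorphism $H\to C_k$ then follows at once.

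\textbf{Analysis of periods.} For a strongly connected subgraph $S\subseteq H$, let $p=p(S)$ denote its period, i.e., the gcd of lengths of all closed walks in $S$. Then $S$ admits a canonical homomorphism to $C_p$ partitioning $V(S)=V_0\sqcup\cdots\sqcup V_{p-1}$ with edges only from $V_i$ to $V_{i+1\bmod p}$. Suppose further that, uniformly for every $v\in S$, either $\delta^-_S(v)=\delta^-_H(v)$ or $\delta^+_S(v)=\delta^+_H(v)$---i.e., all in-neighborhoods lie in $S$, or all out-neighborhoods lie in $S$. In either case $|V_i|\ge\tfrac{n}{k}(1-\tfrac{1}{30k})$ for every $i$, whence
\begin{equation*}
|S|=\sum_{i=0}^{p-1}|V_i|\ge p\cdot \tfrac{n}{k}\bigl(1-\tfrac{1}{30k}\bigr), \qquad\text{so}\qquad p\le\tfrac{k}{1-1/(30k)}<k+1
\end{equation*}
for $k\ge 5$; hence $p\le k$. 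Conversely, if $p<k$ then $p\mid\ell$ by the definition of $k$ (since every integer in $\{1,\dots,k-1\}$ divides $\ell$). Using that $S$ is a near-blow-up of $C_p$ with each bipartite layer dense, one produces a closed walk of length $p$ through a typical starting vertex, and concatenating $\ell/p$ copies yields a closed walk of length $\ell$ in $S\subseteq H$, contradicting the hypothesis. Hence $p=k$, and consequently $|S|\ge n-\tfrac{n}{30k}$.

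\textbf{Strong connectivity of $H$.} Apply the above to the source SCC $S_1$ of $H$ (for which $\delta^-_{S_1}=\delta^-_H$) and to the sink SCC $S_r$ (for which $\delta^+_{S_r}=\delta^+_H$). Both have size $\ge n-\tfrac{n}{30k}$, so since distinct SCCs are disjoint, the source and the sink must be unique and must coincide. Any further SCC would again host its own source and sink of the same forbidden size, which is impossible. Hence $H$ is strongly connected, and applying the period analysis to $S=H$ yields $p(H)=k$, i.e.\ $H\to C_k$.

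\textbf{Main obstacle.} The most technical step is the closed walk realization inside $S_1$ (or $S_r$) when only the internal in-degrees (respectively out-degrees) are controlled by the hypothesis. The layer-size inequality of the first paragraph already gives $|S_1|\ge p(S_1)\cdot\tfrac{n}{k}(1-o(1))$, leaving only $\le n\bigl(1-p(S_1)/k\bigr)(1+o(1))$ vertices outside $S_1$. In the hardest case $p(S_1)<k$ one still needs a length-$\ell$ closed walk inside $S_1$ despite the one-sided degree bound; this can be built using strong connectivity of $S_1$ together with the fact that each bipartite layer of its period partition has both sides of size $\ge\tfrac{n}{k}(1-\tfrac{1}{30k})$, which provides enough combinatorial room to start and close a walk from any chosen vertex. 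Once this is established, the rest of the argument is routine bookkeeping.
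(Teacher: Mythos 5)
There is a fatal gap at the heart of your argument: the step ``if $p<k$ then $p\mid\ell$, produce a closed walk of length $p$ and concatenate $\ell/p$ copies.'' First, $H$ contains \emph{no} closed walk of length less than $k$ at all: lengths $1$ and $2$ are impossible in an oriented graph, and a directed cycle of length $j\in\{3,\dots,k-1\}$ traversed $\ell/j$ times would already be a closed $\ell$-walk (every such $j$ divides $\ell$ by the choice of $k$), so $H$ has girth at least $k$. Hence the closed $p$-walk you want to concatenate cannot exist for any $p<k$. Second, the weaker true consequence of ``period $p$'' --- that all closed-walk lengths are $\equiv 0 \pmod p$ --- yields no contradiction: the set of closed-walk lengths through a vertex is a numerical semigroup generated by cycle lengths that are all at least $k$ and may be as large as $n$, and since $\ell$ is \emph{fixed} while $n\to\infty$, there is no Frobenius-type guarantee that $\ell$ itself is represented even when $p\mid\ell$. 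Concretely, for $k=5$, $\ell=12$, a strongly connected graph whose cycle lengths all lie in $\{5,8\}$ has period $1<k$ but no closed walk of length $12$, since $12\notin\{5a+8b\}$. Your appeal to density does not rescue this: in the source SCC only in-degrees are controlled, a period class $V_{i-1}$ can have size up to $n-(p-1)\frac{n}{k}(1-\frac1{30k})$, so for small $p$ the relative in-degree $d/|V_{i-1}|$ is far below $1/2$ (and for $p=1$ the ``near-blow-up of $C_p$'' picture is vacuous). Since the conclusion $|S_1|\ge n-\frac{n}{30k}$, and hence your strong-connectivity argument, both rest on first ruling out $p<k$, the whole proof collapses at this point.

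What is missing is precisely the content of the paper's two main claims: a diameter bound (any two vertices are joined by a directed path of length at most $10\lfloor\frac{k-1}{3}\rfloor$, proved via Caccetta--H\"aggkvist-type girth results applied to iterated out-neighborhoods) and the exclusion of $s$-shortcuts for $s\le\lfloor k/2\rfloor+1$ (proved by closing a shortcut into two closed walks of lengths $|P|+1$ and $|P|+s$ and invoking Sylvester's bound for the Frobenius problem). These are exactly the tools that control \emph{which} closed-walk lengths occur and force every vertex onto a $C_k$, which is the step your period analysis cannot replace.
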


Observe now that \thmref{thm:kbig_walk} cannot hold in this form when $\ell=3a$ for odd integers~$a$.
Indeed, any balanced orientation of $K_{n/2,n/2}$ yields an oriented graph $G$ with semidegree $\delta^\pm(G) = \frac{n}{4}$ yet every closed walk in $G$ has an even length.
Nevertheless, when $k=4$ and $\ell\ge9$, we prove the following stability-type result.

\begin{theorem}\label{thm:k4walk}
Fix an integer $\ell \ge 9$ that is divisible by $3$ and not divisible by $4$.
Every oriented graph $H$ on $n$ vertices with $\delta^\pm(H) \ge \frac{n}{4}-o(n)$ that does not contain a closed walk of length~$\ell$ can be made bipartite by removing $o(n)$ vertices.
\end{theorem}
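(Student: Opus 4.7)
The plan is to proceed by contradiction, combining a directed regularity reduction with an analysis of the index of imprimitivity of the reduced digraph. Suppose toward contradiction that $H$ satisfies $\delta^\pm(H) \ge n/4 - o(n)$, contains no closed walk of length $\ell$, yet cannot be made bipartite by removing $o(n)$ vertices. We apply the directed Szemer\'edi regularity lemma to $H$ with a sufficiently small parameter $\varepsilon = \varepsilon(\ell)$ to obtain a reduced digraph $R$ on $M = M(\varepsilon)$ vertices, with $\delta^\pm(R) \ge M/4 - o(M)$. Since closed walks of $R$ live within single strongly connected components (SCCs), the no-$\ell$-walk hypothesis restricts each SCC individually.

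The first key step is to show that $R$ has essentially a single SCC $R'$ of size $|R'| \ge M - o(M)$ with $\delta^\pm(R') \ge M/4 - o(M)$ in its induced subdigraph. For any source SCC $C$ (all of whose in-edges must lie within $C$), the min-in-degree condition forces $|C| \ge M/4 - o(M)$, and a double-counting argument on directed pairs inside $C$ further constrains its structure: if $C$ has index $2$, then both bipartition classes must each have size $\ge M/2 - o(M)$, forcing $|C| \ge M - o(M)$; the same holds for index $4$; and if $C$ has index $1$ or $3$, the density of $C$ together with the Frobenius-type argument below already yields an $\ell$-closed walk inside $C$, a contradiction. Symmetric reasoning applies to sink SCCs, and the remaining intermediate vertices can be absorbed into the $o(M)$ exceptional set.

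For $R'$, let $d$ be its index of imprimitivity. Since $R'$ is homomorphic to $C_d$ with partition classes each of size $\ge M/4 - o(M)$, summing part sizes forces $d \le 4$. The heart of the proof is ruling out $d \in \{1,3\}$. The min-semidegree condition forces the girth of $R'$ to be at most some absolute constant via Caccetta--H\"aggkvist-type bounds in this density regime, and combined with the strong connectivity of $R'$ and the fact that the $\gcd$ of cycle lengths in $R'$ equals $d$, a Frobenius-type argument on the numerical semigroup of closed-walk lengths shows that $R'$ contains a closed walk of every length that is a multiple of $d$ and at least some absolute constant $c_0$. Since $3 \mid \ell$ and $d \in \{1,3\}$ gives $d \mid \ell$, and $\ell \ge 9 \ge c_0$ by hypothesis, $R'$ contains a closed walk of length exactly $\ell$; a standard regularity-based embedding along this walk produces a closed walk of length $\ell$ in $H$, contradicting our assumption.

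Hence $d \in \{2,4\}$, and in both cases $R'$ is bipartite (as $C_2$ and $C_4$ are). Lifting this bipartition through the regularity partition and absorbing the exceptional clusters together with the $\varepsilon n$ unregulated vertices yields a bipartition of $V(H) \setminus X$ for some $|X| = o(n)$, proving that $H$ can be made bipartite by removing $o(n)$ vertices. The main obstacles are (i) the double-counting argument that forces $R$ to have a dominant SCC, which is essential to avoid complications from inter-SCC edges that could otherwise destroy global bipartiteness; and (ii) the constant management in ruling out $d \in \{1,3\}$, where the hypothesis $\ell \ge 9$ is crucial to ensure $\ell \ge c_0$ (the exclusion of $\ell = 6$ corresponds to a regime where additional extremal configurations may exist).
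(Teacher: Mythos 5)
Your overall architecture has a decisive gap at the very last step: a regularity-lemma reduction can only certify that $H$ is close to bipartite in \emph{edit distance} (removing $o(n^2)$ edges), not that it can be made bipartite by removing $o(n)$ \emph{vertices}, which is what the theorem asserts. The edges of $H$ that are invisible to the reduced digraph $R$ --- edges inside clusters, edges in sparse or irregular pairs --- amount to $o(n^2)$ edges, but $o(n^2)$ edges can easily require $\Omega(n)$ vertices to cover (a perfect matching inside one cluster is the canonical example, and it is exactly this kind of configuration that makes $\ell=6$ exceptional, as the one-way-oriented $K_{n/8,n/8}$ inside a blob of a $C_4$-blow-up shows). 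To close this gap you would have to argue directly in $H$ that any linear-sized collection of disjoint ``bipartiteness-violating'' edges creates a closed walk of length $\ell$; your proposal never touches these edges, and the phrase ``lifting this bipartition through the regularity partition'' silently assumes they do not exist. Two further steps are also unjustified as written: (i) the Frobenius constant $c_0$ you get from girth and diameter bounds in the density regime $\delta^\pm\ge M/4-o(M)$ is nowhere near $9$ (girth $\le 6$ and diameter around $5$--$6$ already push the naive Frobenius bound well past $\ell=9,15,21$, which is precisely why small $\ell$ requires separate case analysis); and (ii) for a source SCC of index $2$, the in-degree condition only forces each imprimitivity class to have size $\ge M/4-o(M)$, hence $|C|\ge M/2-o(M)$, not the $M-o(M)$ you claim, so the dominant-SCC reduction does not go through for index $2$.

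For comparison, the paper's proof of this theorem deliberately avoids regularity and works directly with neighborhoods in $H$: it first observes $H$ is $C_3$-free, so if $H$ has no transitive triangle the underlying graph is triangle-free with minimum degree $\frac{n}{2}-o(n)$ and the Andr\'asfai--Erd\H{o}s--S\'os theorem gives bipartiteness outright; otherwise it bounds the diameter (every ordered pair is joined by a path of length between $2$ and $5$), deduces that no two vertices admit paths of four consecutive lengths, and uses a transitive triangle to produce four pairwise disjoint sets $A,B,C,D$, each of size $\frac{n}{4}-o(n)$, whose internal and cross edges are then pinned down one by one until, after deleting $o(n)$ vertices, the graph is bipartite. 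That direct approach is what delivers the vertex-deletion (rather than edge-deletion) conclusion, and it is also where the hypothesis $\ell\ge 9$ and the small cases $\ell=9$ are actually handled.
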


The case of $C_6$ is even more exceptional.
Consider a balanced blow-up of $C_4$ and place inside one of the blobs an arbitrary \emph{one-way oriented} bipartite graph $B$, that is, a bipartite graph with all its edges going from one part to the other one.
Clearly, the obtained graph is $C_6$-free.
However, if $B$ is a one-way oriented $K_{\frac{n}{8},\frac{n}{8}}$ then one must remove $\frac{n}{8}$ vertices to make the resulting graph bipartite.

This paper is organized as follows.
\secref{sec:prelim} introduces the notation and tools we use.
In \secref{sec:constructions}, we construct oriented graphs with large minimum semidegree showing that the threshold in \thmref{thm:main} is the best possible.
Most of the paper is devoted to the proof of \thmref{thm:main}.
Based on the discussion above, our presentation is divided into three more or less independent parts:
In \secref{sec:kbig}, we focus on the cycle lengths~$\ell$ with $k \ge 5$,
\secref{sec:k4} deals with the case $\ell \ge 9$ and $k = 4$,
and we devote \secref{sec:l6} to the semidegree threshold of $C_6$.
Finally, \secref{sec:otherorientations} is concerned with asymptotics of the semidegree thresholds for cycles with a fixed non-cyclic orientation, and \secref{sec:outro} concludes the paper by discussing possible future directions.

\section{Notation and Preliminaries}\label{sec:prelim}

Unless explicitly stated, all the graphs considered in this paper are \emph{oriented graphs}, that is, directed graphs without loops and multiple edges (regardless of the direction).
Given a graph $G$, we write $V(G)$ and $E(G)$ to denote its vertex-set and its edge-set, respectively.
For two vertices $u \in V(G)$ and $w \in V(G)$, we write $uw$ to denote the edge oriented from $u$ to $w$.
Note that if $uw \in E(G)$ then $wu \notin E(G)$, and vice versa.

For a graph $G$ and a vertex $v \in V(G)$, we denote by $N_G^+(v)$ and $N_G^-(v)$ the \emph{outneighborhood} and the \emph{inneighborhood} of $v$, respectively.
If the graph $G$ is known from the context, we omit the subscript and simply write $N^+(v)$ and $N^-(v)$.
More generally, for $S \subseteq V(G)$, we define $N^+(S):=\bigcup_{v \in S} N^+(v)$ and $N^-(S):=\bigcup_{v \in S} N^-(v)$.

The sizes of the outneighborhood and the inneighborhood of a vertex $v$ are referred as the \emph{outdegree} and the \emph{indegree} of $v$,
and we write $\delta^+(G)$ and $\delta^-(G)$ to denote the minimum outdegree and the minimum indegree over all the vertices of~$G$, respectively.
By the \emph{minimum semidegree} $\delta^\pm(G)$ we mean the minimum of $\delta^+(G)$ and $\delta^-(G)$.

Whenever we write a \emph{path} or a \emph{walk} of length $\ell$, we always mean a directed path or a directed walk with $\ell$~edges.
For brevity, we write $C_\ell$ or $\ell$-cycle to denote a directed cycle of length $\ell$, and $P_\ell$ to denote a directed path of length $\ell$.
Given an integer $s \ge2$, the \hbox{\emph{$s$-shortcut}} is a path of length $s$ with an additional edge from the first vertex to the last one, that is, the graph obtained from $C_{s+1}$ after changing the orientation of one edge, see \figref{fig:shortcuts}.
We call its unique vertex of outdegree~$2$ the~\emph{source}, and the unique vertex of indegree~$2$ the~\emph{sink}.
\begin{figure}[ht]
{\hfill
\includegraphics{figures/figure-12}\hfill
\includegraphics{figures/figure-13}\hfill
\includegraphics{figures/figure-14}\hfill
\includegraphics{figures/figure-15}
\hfill}
\caption{The $2$-shortcut, the $3$-shortcut, the $4$-shortcut and a general $s$-shortcut.}\label{fig:shortcuts}
\end{figure}

Throughout the paper, we use the standard Bachmann--Landau notation: for two functions $f$ and $g$ from $\Nat$ to $\Real$, we write $f(n)=o(g(n))$ if $\lim \frac{f(n)}{g(n)} = 0$. 
With a slight abuse of notation, we also write $f(n) \geq o(g(n))$ to denote that $\liminf \frac{f(n)}{g(n)} \geq 0$.

We say that a graph $F$ is \emph{homomorphic} to a graph $H$ if there exists a map $f: V(F) \to V(H)$ that preserves the adjacencies, that is, if $uv \in E(F)$ then $f(u)f(v) \in E(H)$.
A \emph{blow-up} of a graph $H$ is a graph obtained from $H$ by placing an independent set $I_v$ for every $v \in V(H)$, and, for every $uv \in E(H)$, connecting $x$ to $y$ for all $x\in I_u$ and $y \in I_v$.
In case $|I_v|=b$ for every $v \in V(G)$, we call the obtained graph a \emph{$b$-blow-up} of $G$, or a \emph{balanced blow-up} of $G$.
Note that a graph $F$ is homomorphic to $H$ if and only if $F$ is a subgraph of some blow-up of $H$.

For a fixed graph $F$, we say that a graph $G$ is $F$-free if $G$ has no (not necessarily induced) subgraph isomorphic to $F$.
More generally, if $\cF$ is a fixed family of graphs, we say that $G$ is $\cF$-free if $G$ is $F$-free for every $F \in \cF$.

One of the most important and powerful results in graph theory is the \emph{Regularity Lemma} introduced in~1976 by Szemer\'edi~\cite{Sem78}; see also~\cites{KSSS02,RoS10} for excellent surveys on the topic.
A well known application of the regularity method is the so-called \emph{Removal Lemma}~\cite{EFR86}.
Here we state its degree-form for oriented graphs, which can be derived, for example, using the regularity lemma for directed graphs of Alon and Shapira~\cite{AloS04}.
\begin{prop}\label{prop:removallemma}
Fix an $\ell$-vertex oriented graph $F$.
If an $n$-vertex oriented graph $G$ contains $o(n^\ell)$ copies of~$F$, then $G$ contains an $F$-free subgraph $H$ with $|E(G)| - o(n^2)$ edges.
Moreover, one can remove $o(n)$ vertices from~$H$ to get an $F$-free $H' \subseteq G$ with $\delta^\pm(H') = \delta^\pm(G) - o(n)$.
\end{prop}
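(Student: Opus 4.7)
The plan is to derive this statement from the directed-graph regularity lemma of Alon and Shapira in the standard way, following the classical proof of the removal lemma, and then upgrade the edge-version to a semidegree-version by iteratively pruning low-degree vertices.

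First I would apply the regularity lemma to $G$ with parameters $\varepsilon, d$ (to be chosen depending on a counting lemma and on $F$), obtaining a vertex partition $V_0 \cup V_1 \cup \cdots \cup V_M$ with $|V_0| \le \varepsilon n$ and $|V_i| = m$ for $i \ge 1$, such that all but at most $\varepsilon M^2$ of the ordered pairs $(V_i,V_j)$ induce an $\varepsilon$-regular bipartite subgraph in each direction. I would then form the cleaned subgraph $H$ by deleting: edges inside the parts $V_i$, edges incident to $V_0$, edges in irregular pairs, and edges in pairs of density below $d$. A standard calculation shows at most $(2\varepsilon + d) n^2$ edges are removed, which is $o(n^2)$ once $\varepsilon$ and $d$ are taken small.

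Next, I would argue that $H$ is $F$-free provided $\varepsilon, d$ are chosen correctly relative to $F$. Suppose $H$ contains a copy of $F$ on vertices $x_1,\dots,x_\ell$, landing in parts $V_{i_1},\dots,V_{i_\ell}$ (possibly with repetition). Since every edge of this copy sits in a regular pair of density at least $d$, the directed counting lemma gives at least $\Omega(m^\ell) = \Omega(n^\ell)$ copies of $F$ in $G$ that respect the same part assignment, contradicting the assumption that $G$ contains only $o(n^\ell)$ copies of $F$. Hence $H$ is $F$-free with $|E(H)| \ge |E(G)| - o(n^2)$, which gives the first conclusion.

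For the moreover part, set $\eta(n) n^2 := |E(G)| - |E(H)|$, so $\eta(n) = o(1)$, and take a threshold $f(n) := \sqrt{\eta(n)}\, n = o(n)$. I would iteratively delete any vertex whose current in- or out-degree in the working subgraph has dropped by more than $f(n)$ from its value in $G$, obtaining a sequence $v_1,\dots,v_t$ of removed vertices and a final graph $H'$. To bound $t$, I would double-count pairs $(v_i, u)$ with $u$ an out-neighbor of $v_i$ in $G$ that is absent from the subgraph when $v_i$ is removed: each $v_i$ contributes at least $f(n)$ such pairs, while on the other side each pair is accounted for either by an edge removed in the initial cleanup (at most $\eta(n) n^2$ overall) or by an edge of $G$ inside $\{v_1,\dots,v_t\}$ (at most $\binom{t}{2}$ overall). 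This yields $t\, f(n) \le \eta(n) n^2 + t^2/2$, and a straightforward bootstrapping shows $t = O(\sqrt{\eta(n)}\, n) = o(n)$, while by construction $\delta^\pm(H') \ge \delta^\pm(G) - f(n) = \delta^\pm(G) - o(n)$. Since $H' \subseteq H$ remains $F$-free, this completes the proof.

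The only genuinely delicate point is the termination of the pruning step; the rest is a standard application of the regularity and counting machinery, for which the quoted Alon--Shapira lemma is precisely the right tool in the oriented setting.
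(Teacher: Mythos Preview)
The paper does not actually prove this proposition; it is stated without proof as a well-known consequence of the Alon--Shapira directed regularity lemma, and your outline is exactly the standard derivation the authors have in mind. The first part (apply regularity, clean up irregular/sparse pairs and intra-cluster edges, then invoke a counting lemma to see that a surviving copy of $F$ would force $\Omega(n^\ell)$ copies) is correct as written.

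One genuine quibble in the ``moreover'' part: with your choice $f(n)=\sqrt{\eta(n)}\,n$, the inequality $t\,f(n)\le \eta(n)n^2+t^2/2$ is equivalent to $t^2-2\sqrt{\eta(n)}\,n\cdot t+2\eta(n)n^2\ge0$, whose discriminant is negative, so it imposes no restriction on $t$ and no bootstrapping can start. The fix is easy and you essentially flagged it yourself: take $f(n)=C\sqrt{\eta(n)}\,n$ with $C>2$, so the quadratic has two real roots $r_1<r_2$ of order $\sqrt{\eta(n)}\,n$, and note that since the process increments $t$ by one it cannot cross the forbidden interval $(r_1,r_2)$, forcing $t\le r_1=O(\sqrt{\eta(n)}\,n)$. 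Alternatively, avoid the iteration entirely: let $B$ be the set of vertices that lost more than $\sqrt{\eta(n)}\,n$ incident edges in passing from $G$ to $H$; then $|B|\le 2\sqrt{\eta(n)}\,n$, and $H':=H-B$ already satisfies $\delta^\pm(H')\ge \delta^\pm(G)-\sqrt{\eta(n)}\,n-|B|=\delta^\pm(G)-o(n)$.
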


\subsection{Caccetta--H\"aggkvist conjecture}
In order to prove \thmref{thm:main}, we will make use of some results on Caccetta--H\"aggkvist conjecture.
As we stated in the introduction, the conjecture has been extensively studied and various relaxations has been considered.
A~particular interest is given to the so-called triangle case of the conjecture, that is, when $\ell=3$.

The triangle case of the conjecture states that every oriented graph $G$ on $n$ vertices with $\delta^+(G) \ge n/3$ must contain a directed triangle.
The original paper of Caccetta--H\"aggkvist~\cite{CacH78} established a weaker bound $\delta^+(G) \ge 0.3820n$ which forces an $n$-vertex oriented graph $G$ to contain $C_3$.
After a series of improvements due to Bondy~\cite{Bon97}, Shen~\cite{She98} and Hamburger, Haxell and Kostochka~\cite{HamHK07} as well as
Hladk\'y, Kr\'al' and Norin~\cite{HlaKN17} applied the flag algebra method of Razborov~\cite{Raz07} and established the following:
\begin{theorem}[\cite{HlaKN17}]\label{thm:CH-C3}
Every $n$-vertex oriented graph $G$ with $\delta^+(G) \ge 0.3465n$ contains~$C_3$.
\end{theorem}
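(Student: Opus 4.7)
The plan is to follow the flag algebra method of Razborov~\cite{Raz07}, as was carried out in~\cite{HlaKN17}. First, I would pass to a limit: take a sequence of $n$-vertex $C_3$-free oriented graphs $G_n$ with $\delta^+(G_n)/n$ approaching the conjectural threshold $c$, and extract a flag algebra homomorphism $\phi$ on the theory of oriented graphs from this sequence. The hypothesis translates into two linear constraints on $\phi$: the triangle density vanishes, $\phi(C_3)=0$, and the rooted density $\phi^v(\chi^+)$, where $\chi^+$ denotes the $1$-flag consisting of the root together with a single outneighbor, is at least $c$ for $\phi$-almost every vertex $v$. The task becomes showing that $c \le 0.3465$ for any such $\phi$.

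To derive the bound, I would combine Cauchy--Schwarz-type nonnegativity certificates: for every type $\sigma$ and every list of $\sigma$-flags $f_1,\dots,f_m$, any positive semidefinite matrix $M=(M_{ij})$ gives rise to a nonnegative element of the flag algebra via $\sum_{i,j} M_{ij}\, f_i f_j$, whose image under $\phi$ (after averaging out the $\sigma$-labels) is nonnegative. A suitable nonnegative combination of such square certificates across finitely many types, added to nonnegative multiples of the outdegree slack $\phi^v(\chi^+)-c$ averaged against further $1$-flags, plus an arbitrary multiple of the equality $\phi(C_3)=0$, should sum to an identity forcing $c \le 0.3465$. Concretely, I would fix a flag size $N\in\{6,7\}$, enumerate the finitely many oriented graphs, types, and $\sigma$-flags on at most $N$ vertices, and cast the search for the coefficients as a semidefinite program whose optimum is the best achievable upper bound on $c$.

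The main obstacle is computational and verification-theoretic rather than conceptual. The SDP has thousands of variables and must be solved numerically, after which the floating-point solution needs to be rounded to an exact rational positive semidefinite certificate whose verification is independent of machine arithmetic; this rounding must be carried out carefully enough that the slack beyond $0.3465$ survives. Exploiting the automorphism groups of the types to block-diagonalise the program, and producing a reasonably compact certificate that can be independently checked, are the technical hurdles responsible for the substantial effort behind~\cite{HlaKN17}. A purely combinatorial argument seems unlikely to reach $0.3465$: the successive classical improvements due to Bondy~\cite{Bon97}, Shen~\cite{She98}, and Hamburger--Haxell--Kostochka~\cite{HamHK07} have each required increasingly ad hoc ideas, whereas the flag algebra framework provides a single scheme that is in principle tight.
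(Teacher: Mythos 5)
The paper does not prove this statement at all: it is imported verbatim from~\cite{HlaKN17}, and your outline correctly identifies the method used there (a limit/flag-algebra homomorphism encoding $\phi(C_3)=0$ and the pointwise outdegree constraint, combined with positive-semidefinite Cauchy--Schwarz certificates found by a semidefinite program and rounded to exact rational data). So your approach matches the source; the only caveat is that what you have written is a description of the proof scheme rather than a proof, since the actual content of~\cite{HlaKN17} is the explicit certificate itself, which cannot be reproduced or verified from the outline alone.
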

Note that De Joannis de Verclos, Sereni and the second author~\cite{JSV} established an improvement to $0.3388n$; however, for our purposes the bound from~\cite{HlaKN17} will be sufficient. 

Following an idea of Shen~\cite{She02}*{Theorem 1}, we use \thmref{thm:CH-C3} to obtain a bound for longer cycle lengths in \conjref{conj:CH}.
The obtained bound is rather weak, nevertheless it will be sufficient for our purposes. 
\begin{lemma}\label{lem:girth}
Every $n$-vertex oriented graph $G$ with $\delta^+(G) \ge (1 - 0.6535^{1/m})n$ contains a directed cycle of length at most $3m$.
\end{lemma}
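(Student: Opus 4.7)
Following the approach of Shen~\cite{She02}, the plan is to reduce the statement to \thmref{thm:CH-C3} by passing to an appropriate auxiliary oriented graph. Specifically, suppose for contradiction that $G$ has $\delta^+(G) \ge (1 - 0.6535^{1/m})n$ yet contains no directed cycle of length at most $3m$. Define $G^{(m)}$ on $V(G)$ by placing an arc $xy$ whenever there is a directed path of length exactly~$m$ from $x$ to $y$ in~$G$. The absence of short cycles in $G$ translates into structural properties of $G^{(m)}$: a loop in $G^{(m)}$ would give a closed walk of length $m$ in $G$, a pair of opposite arcs would give a closed walk of length $2m$, and a directed triangle would give a closed walk of length $3m$; each of these contains a directed cycle of length at most $3m$, contradicting the assumption. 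Hence $G^{(m)}$ is a triangle-free oriented graph on $n$ vertices.

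The task then reduces to establishing a matching lower bound $\delta^+(G^{(m)}) \ge 0.3465\, n$, whereupon \thmref{thm:CH-C3} applied to $G^{(m)}$ produces the sought contradiction. Equivalently, one must show that for each vertex $v \in V(G)$, the set $\phi^m(v)$ of endpoints of directed length-$m$ paths from $v$ satisfies $|\phi^m(v)| \ge 0.3465\, n$. Writing $\alpha := 1 - 0.6535^{1/m}$, so that the hypothesis reads $\delta^+(G) \ge \alpha n$, the plan is to prove by induction on $k$ that
\[ |V(G) \setminus \phi^k(v)| \le (1-\alpha)^k n = 0.6535^{k/m} n, \]
which at $k = m$ yields the needed bound.

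The crux, and the main obstacle, is the inductive step. A vertex $w \notin \phi^{k+1}(v)$ satisfies $N^-(w) \cap \phi^k(v) = \emptyset$, i.e., $N^-(w) \subseteq V(G) \setminus \phi^k(v)$. The difficulty is that the hypothesis only controls $\delta^+(G)$, not $\delta^-(G)$, so the desired shrinkage does not follow from a direct union bound. The plan is to leverage the short-cycle-free structure of $G$: many persistent $w \notin \phi^{k+1}(v)$ produce many vertices whose in-neighborhoods all lie in the small set $V(G) \setminus \phi^k(v)$, and combining the oriented-graph inequality $d^+(u) + d^-(u) \le n - 1$ with a careful double counting of edges between $\phi^k(v)$ and its complement contradicts the available out-degrees once the assumed cycle-freeness forbids the necessary collisions. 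Translating this combinatorial rigidity into the quantitative shrinkage bound is the technical heart of the argument, after which the rest is bookkeeping.
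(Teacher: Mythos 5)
There is a genuine gap, and you have correctly located it yourself: the inductive shrinkage bound $|V(G)\setminus\phi^{k+1}(v)|\le(1-\alpha)\,|V(G)\setminus\phi^{k}(v)|$ is never proved, and the sketch you offer for it (combining $d^+(u)+d^-(u)\le n-1$ with a double count of edges into $V(G)\setminus\phi^k(v)$) does not close. The obstruction is exactly the one you name: a vertex $w\notin\phi^{k+1}(v)$ only tells you $N^-(w)\subseteq V(G)\setminus\phi^k(v)$, and since the hypothesis gives no lower bound on indegrees and no upper bound on outdegrees, neither side of the resulting edge count between $\phi^k(v)$ and its complement is controlled. Moreover, working with paths of length \emph{exactly} $m$ makes matters worse, because the sets $\phi^k(v)$ are not nested, so there is no monotonicity to exploit.

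The missing idea is a minimal-counterexample argument applied to induced subgraphs of the \emph{forward} reachability sets. Take $G$ to be a counterexample with the fewest vertices, and set $X_1:=N^+(x)$, $X_{i+1}:=X_i\cup N^+(X_i)$, i.e., the (nested) sets of vertices reachable from $x$ by a nonempty path of length at most $i$. Each induced subgraph $G[X_i]$ still has no directed cycle of length at most $3m$ and has fewer vertices than $G$, so by minimality it contains a vertex $x_i$ with $|N^+(x_i)\cap X_i|<c|X_i|$, where $c=1-0.6535^{1/m}$. That single vertex pushes at least $|N^+(x_i)|-c|X_i|\ge cn-c|X_i|$ new vertices into $X_{i+1}$, giving $|X_{i+1}|>cn+(1-c)|X_i|$ and hence $|X_m|>(1-(1-c)^m)n=0.3465n$. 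This converts the outdegree hypothesis into an expansion statement using only outdegrees, which is precisely what your approach lacks. The reduction then runs as you intended, except that the auxiliary graph should have an arc $uv$ whenever there is a path of length \emph{at most} $m$ from $u$ to $v$ (so its outneighborhoods are the sets $X_m$, of size at least $0.3465n$); absence of cycles of length at most $2m$ in $G$ makes it an oriented graph, and \thmref{thm:CH-C3} yields a triangle in it, hence a closed walk of length at most $3m$ in $G$ and the desired contradiction.
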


\begin{proof}
Let $c := 1 - 0.6535^{1/m}$.
For the sake of a contradiction, let $G$ be a counterexample with the smallest number of vertices $n$.
We have that $\delta^+(G) \ge cn$, $G$ contains no directed cycle of length at most~$3m$, but any graph $G'$ on $n' < n$ vertices with $\delta^+(G') \ge cn'$ does contain a cycle of length at most $3m$.

Consider an arbitrary vertex $x\in V(G)$, and let $X_1 := N^+(x)$ and $X_{i+1} := X_i \cup N^+(X_i)$. 
Clearly, for any $1 \le i < m$, the induced graph $G[X_i]$ contains no directed cycle of length at most $3m$.
Since it has less vertices than graph $G$, there exists $x_i \in X_i$ such that $|N^+(x_i) \cap X_i| < c|X_i|$. 
Thus, 
$$|X_{i+1}| > |X_i| + |N^+(x_i)| - c |X_i| \ge cn + (1-c)|X_i|.$$
Since $|X_1|\ge cn$, we obtain that for every $1 < i \le m$,
$$|X_{i}| > (1 - (1-c)^{i})n.$$
In particular,
\begin{equation}\label{eq:ch3m}
|X_m| > (1 - (1-c)^m)n = 0.3465n.
\end{equation}
Therefore, from any vertex of $G$ there is a path of length at most~$m$ to at least $0.3465n$ vertices. 

Consider an auxiliary directed graph $G_m$ on the same set of vertices as $G$ has, where we put an edge from a vertex $u$ to vertex $v$ if $G$ contains a path from $u$ to $v$ of length at most~$m$.
Clearly, if $uv \in E(G_m)$, then $vu \notin E(G_m)$ since $G$ contains no directed cycle of length at most $2m$.
Also, by~\eqref{eq:ch3m}, the graph $G_m$ has the minimum outdegree $\delta^+(G_m) \ge 0.3465n$.
However, \thmref{thm:CH-C3} yields a directed triangle in $G_m$, which translates to a directed cycle of length at most $3m$ in $G$; a contradiction.
\end{proof}

Two specific cases of our proof of \thmref{thm:main} are going to need better bounds than those given by \lemref{lem:girth}.
This will be provided by the following two lemmas, which are straightforward applications of the flag algebra method similar to those used in~\cite{HlaKN17}.

\begin{lemma}\label{lem:flags-k4}
There exists $n_0$ such that there is no $\{C_3,C_6\}$-free oriented graph $G$ on $n \ge n_0$ vertices with $|N^+(v)| = 0.2509n$ for every $v \in V(G)$.
\end{lemma}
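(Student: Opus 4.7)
The plan is to adapt the flag algebra approach Hladk\'y, Kr\'al' and Norin used to establish \thmref{thm:CH-C3}. Suppose for contradiction that the lemma fails, so for infinitely many $n$ there is a $\{C_3,C_6\}$-free oriented graph $G_n$ on $n$ vertices in which every vertex has outdegree exactly $0.2509\,n$. By the standard compactness argument in the space of oriented-graph limits (equivalently, in Razborov's flag algebra formalism), a subsequence of $(G_n)$ converges to a flag algebra homomorphism $\phi\colon\mathcal{A}\to\mathbb{R}$. This $\phi$ inherits three constraints from the sequence $(G_n)$: namely $\phi(C_3)=0$, $\phi(C_6)=0$, and in the $1$-labelled flag algebra $\mathcal{A}^1$ the flag $\vec{e}$ corresponding to a single outgoing edge from the labelled vertex has $\phi_1(\vec{e})\equiv 0.2509$, since the outdegree is already exactly constant in every $G_n$.

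The second step is to exhibit a sum-of-squares certificate that, combined with these three constraints, contradicts the existence of $\phi$. Concretely, working in the flag algebra modulo the ideal generated by all oriented graphs that contain $C_3$ or $C_6$, we seek positive semidefinite matrices $M_\tau$ indexed by small types $\tau$, together with vectors $F_\tau$ of $\tau$-flags on at most a fixed number of unlabelled vertices (for instance five), and a multiplier $\mu\in\mathcal{A}^1$ that absorbs the outdegree constraint, such that
$$\sum_{\tau}\bigl\langle F_\tau^{\top} M_\tau\, F_\tau \bigr\rangle_{\tau}\;+\;\bigl\langle \mu\cdot(\vec{e}-0.2509)\bigr\rangle_{1}\;\ge\;c$$
holds as an identity in $\mathcal{A}$ for some constant $c>0$, where $\langle\cdot\rangle_\tau$ denotes the standard unlabelling averaging operator. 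Applying $\phi$ to both sides would then force $0\ge c>0$, the desired contradiction. The reason we expect such a certificate to exist with some slack is precisely that $0.2509$ sits strictly below the true extremal value for $\{C_3,C_6\}$-free regular outtournaments, so the SDP has a feasible strictly-positive objective.

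The main obstacle is the construction of the SDP certificate itself; this is not a by-hand calculation. In practice one enumerates all $\{C_3,C_6\}$-free $\tau$-flags on at most the chosen number of unlabelled vertices together with the admissible small types, assembles the corresponding semidefinite program, solves it numerically, and then rounds the approximate dual solution to an exact rational PSD certificate whose feasibility is verifiable symbolically. The enumeration scripts, the SDP data, and the rounded certificate needed to complete the proof in this manner would be made available at \webpage, analogously to the computer-assisted verification of \thmref{thm:CH-C3} in \cite{HlaKN17}.
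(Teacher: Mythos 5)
Your proposal takes essentially the same route as the paper: the paper gives no in-text argument for this lemma, stating only that the proof is a computer-aided application of the flag algebra method in the style of~\cite{HlaKN17}, with verification SAGE-scripts posted at \webpage. Your description of the limit/homomorphism setup, the constraints $\phi(C_3)=\phi(C_6)=0$ with the fixed outdegree, and the rounded sum-of-squares certificate is a faithful account of that method; like the paper, the actual certificate is deferred to the accompanying computation.
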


\begin{lemma}\label{lem:flags-k5}
There exists $n_0$ such that there is no $\{C_3,C_4,C_6\}$-free oriented graph $G$ on $n \ge n_0$ vertices with $|N^+(v)| = 0.20009n$ for every $v \in V(G)$.
\end{lemma}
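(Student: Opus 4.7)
The plan is to apply the flag algebra method of Razborov, following the template developed by Hladk\'y, Kr\'al' and Norin for the proof of \thmref{thm:CH-C3}. Suppose for contradiction that the statement fails; then for arbitrarily large $n$ there exist $\{C_3,C_4,C_6\}$-free oriented graphs $G_n$ on $n$ vertices with $|N^+(v)| = 0.20009n$ for every $v \in V(G_n)$. Passing to a convergent subsequence, one obtains a flag-algebra homomorphism $\phi$ from the flag algebra of $\{C_3,C_4,C_6\}$-free oriented graphs into $\mathbb{R}$, for which the $1$-vertex out-density identity encodes that a random vertex has outdegree exactly $0.20009$.

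The heart of the argument is to exhibit a positive-semidefinite certificate in the flag algebra. One chooses a small collection of types (small labelled oriented graphs) $\sigma_1,\dots,\sigma_t$, enumerates all $\sigma_i$-flags on a prescribed number of vertices, and, for each $i$, searches for a symmetric positive-semidefinite matrix $M_i$ indexed by these flags such that, after applying the standard unlabelling operator and combining the resulting expression with the linear constraint encoding the regular outdegree, one obtains in the flag algebra a strict inequality incompatible with $\phi$ being a valid homomorphism. The matrices $M_i$ are found by encoding the search as a semidefinite program, solving it numerically, and rounding the floating-point solution to an exact rational positive-semidefinite certificate whose correctness can be verified symbolically.

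The expected obstacle is purely computational. The tight construction here is the balanced blow-up of $C_5$, which is $\{C_3,C_4,C_6\}$-free with every vertex having outdegree exactly $n/5 = 0.2n$, so the target threshold $0.20009$ lies only $10^{-4}$ above the true extremal value. The SDP must therefore be run on flags with enough vertices and with a sufficiently rich collection of types so that the numerical optimum leaves enough slack to survive rational rounding; we expect flags on five or six vertices to suffice, in analogy with the computation in~\cite{HlaKN17}. The associated certificate, together with the analogous one for \lemref{lem:flags-k4} (where the extremal construction is the balanced blow-up of $C_4$ giving outdegree $n/4 = 0.25n$ and the gap to $0.2509$ is similarly small), can be made available at the companion webpage \webpage.
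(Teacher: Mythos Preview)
Your proposal is correct and matches the paper's approach exactly: the paper states that the proof is a computer-aided flag algebra computation in the style of~\cite{HlaKN17}, with the verification SAGE-scripts made available at the companion webpage. Your write-up in fact gives more detail on the mechanics of the SDP certificate than the paper itself does.
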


The proofs of both lemmas are computer aided, and the verification SAGE-scripts are available at \url{http://honza.ucw.cz/proj/sd_cycle}.
The two lemmas in turn yield the following.

\begin{corollary}\label{cor:flags}
There is $n_0 \in \Nat$ such that the following is true for every oriented graph~$G$ on $n\ge n_0$ vertices.
{\par{(1)}\hspace{0.3em}} If $\delta^+(G) \ge 0.251n$ then $G$ contains $C_6$.
{\par{(2)}\hspace{0.3em}} If $\delta^+(G) \ge 0.2001n$ then $G$ contains $C_{12}$.
\end{corollary}

\begin{proof}
We prove only the first part as the other one is completely analogous.
Suppose there exists an $n$-vertex oriented graph $G$ with $\delta^+(G) \ge 0.251n$ that has no $C_6$.
By the minimum degree variant of Removal Lemma stated in the moreover part of \propref{prop:removallemma}, we find $H \subseteq G$ on $n'$ vertices with $\delta^+(H) \ge 0.2509n'$ that is also $C_3$-free.
Moreover, we may assume that $|N^+(v)|=0.2509n'$ for every $v\in V(H)$ as otherwise we remove edges going from vertices with larger outdegree.

Let $m:=\lceil \frac{n_0}{n'} \rceil$, where $n_0$ is the constant in \lemref{lem:flags-k4}, and let $H_m$ be the $m$-blow-up of $H$.
Clearly, $H_m$ is $\{C_3,C_6\}$-free and every vertex has outdegree $0.2509n'm$. However, this contradicts \lemref{lem:flags-k4}.
\end{proof}

\subsection{Additive number theory}

In order to concatenate different closed walks together, we use Kneser's theorem, a generalization of the classical Cauchy--Davenport theorem. 
\begin{theorem}[Kneser's theorem~\cite{Kne56}, see also \cite{GerR09}]
\label{thm:kneser}
If $A$ and $B$ are nonempty subsets of an additive group $\Z_k$, then 
$$|A + B| \geq |A + \Stab(A+B)| + |B + \Stab(A+B)| - |\Stab(A+B)|,$$ where $\Stab(A+B) = \{x \in \Z_k : x+A+B=A+B\}$.
\end{theorem}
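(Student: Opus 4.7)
The plan is to follow the classical route via the Dyson $e$-transform combined with a reduction to the ``trivial stabilizer'' case. First, observe that if $H := \Stab(A+B)$, then $A+B$ is a union of cosets of $H$, so $A+B = (A+H)+(B+H)$; in particular, replacing $A$ by $A+H$ and $B$ by $B+H$ does not change the sumset. Passing to the quotient $\Z_k/H$ with $\bar A := (A+H)/H$ and $\bar B := (B+H)/H$, the problem reduces to the following statement, which I shall call the \emph{trivial-stabilizer Kneser inequality}: if $X,Y$ are nonempty subsets of a finite abelian group and $\Stab(X+Y)$ is trivial, then $|X+Y| \geq |X|+|Y|-1$. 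Multiplying by $|H|$ recovers the bound in the theorem, since $|A+H| = |H|\cdot|\bar A|$ and $|B+H|=|H|\cdot|\bar B|$.

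For the trivial-stabilizer inequality, I would induct on $|Y|$. The base case $|Y|=1$ is immediate. For the inductive step, I use the \emph{Dyson $e$-transform}: given $x\in X$, $y\in Y$, set $e := x-y$ and define
$$X' := X \cup (Y+e), \qquad Y' := Y \cap (X-e).$$
One checks readily that $x \in X'$, $y \in Y'$, $X'+Y'\subseteq X+Y$, and $|X'|+|Y'| = |X|+|Y|$. Moreover $|Y'| < |Y|$ whenever $Y \not\subseteq X-e$, i.e., whenever $e$ is not a ``period'' forcing $Y+e\subseteq X$. If $\Stab(X'+Y')$ is still trivial, then applying the induction hypothesis to $(X',Y')$ and combining with $X'+Y' \subseteq X+Y$ yields the desired inequality. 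If instead the stabilizer grew to some nontrivial subgroup $H'$ after the transform, we again quotient by $H'$ and recurse on the smaller structure, as in the first paragraph.

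The main technical obstacle is precisely this last point: the Dyson transform is not guaranteed to preserve the triviality of the stabilizer, so the induction must be strengthened to track the stabilizer of the sumset along the way. A clean way to handle this is to prove by simultaneous induction on $|X|+|Y|$ the full statement $|X+Y| \geq |X+\Stab(X+Y)|+|Y+\Stab(X+Y)|-|\Stab(X+Y)|$, so that whenever the $e$-transform increases the stabilizer we may quotient by the new stabilizer and apply induction to the quotient, whose smaller size justifies the recursion. The remaining bookkeeping — choosing $e$ wisely when $X$ is not already a union of cosets of the current stabilizer, and verifying that the recursion terminates — is routine; for the complete argument I would refer the reader to Chapter~5 of~\cite{GerR09}.
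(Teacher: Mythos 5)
The paper does not prove this statement at all: it is Kneser's theorem, quoted verbatim as a classical result with citations to Kneser (1956) and Geroldinger--Ruzsa, so there is no in-paper argument to compare yours against. Your sketch does identify the standard textbook strategy (quotient by the stabilizer to reduce to the trivial-stabilizer inequality $|X+Y|\ge|X|+|Y|-1$, then run the Dyson $e$-transform), and the reduction in your first paragraph is correct as stated.

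However, as a standalone proof the proposal has a genuine gap, and it is exactly at the point you flag and then dismiss. First, the induction scheme is not well-founded as described: the $e$-transform satisfies $|X'|+|Y'|=|X|+|Y|$, so ``simultaneous induction on $|X|+|Y|$'' never decreases its parameter under the transform; the standard fix is to induct on $|Y|$ (which strictly drops whenever $Y\not\subseteq X-e$) together with a secondary induction on the order of the ambient group to license the quotient step. Second, the case in which $\Stab(X'+Y')$ becomes nontrivial is the actual heart of Kneser's proof, not bookkeeping: after quotienting by the new stabilizer $H'$ one does not directly recover the inequality for the original pair $(X,Y)$, because $X$ and $Y$ need not be unions of $H'$-cosets, and one must carefully count the elements of $X+Y$ lying in cosets of $H'$ that are met by $X+Y$ but not filled by it. This counting argument occupies the bulk of every published proof. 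Deferring it to Chapter~5 of the cited monograph turns your proof into a citation --- which is legitimate (and is what the paper itself does), but it means the proposal should not be read as a complete self-contained argument.
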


The main usage of Kneser's theorem is the following lemma.

\begin{lemma}\label{lem:additive}
Let $\ell$ be a multiple of $12$, $k\geq5$ the smallest integer that does not divide~$\ell$, and  $(A_1, \ldots, A_m)$ a~collection of $m$ sets of non-zero remainders modulo $k$. If the sum of remainders from $A_1, \ldots, A_m$ (at most one remainder from each set)
cannot be equal to $\ell$~mod~$k$, then there are at least $|A_1|+\ldots+|A_m|$ different non-zero values modulo $k$ obtainable by such sums.
\end{lemma}

\begin{proof}
Let $B_i:=A_i\cup \{0\}$.
Observe that the elements obtainable by summing up some of the remainders from $A_1, \ldots, A_m$ are in one-to-one correspondence with the elements of the sumset $B_1+\ldots+B_m$.
Since $|X+\Stab(X+Y)| \geq |X|$ and $|\Stab(X+Y)| \geq |\Stab(Y)|$ for any two sets $X$ and $Y$, an iterative application of \thmref{thm:kneser} yields that 
\begin{align*}
|B_1+\ldots+B_m| &\ge |B_1|+|B_2+\ldots+B_m| - |\Stab(B_1+\ldots+B_m)| \\
&\ge |B_1|+|B_2| + |B_3+\ldots+B_m| - 2|\Stab(B_1+\ldots+B_m)| \\
&\ge \ldots \ge |B_1|+\ldots+|B_m| - (m-1)|\Stab(B_1+\ldots+B_m)|.
\end{align*}

If $\Stab(B_1+\ldots+B_m)$ is non-trivial, then $k$ cannot be prime and thus $k=p^q$ for some prime number~$p$ and an integer $q\ge2$. Since $\ell$ is divisible by all the integers smaller than~$k$ (so, in particular by $p^{q-1}$), we have $\ell \equiv cp^{q-1}$ mod $k$. Therefore, $\Stab(B_1+\ldots+B_m)$ must contain $\ell$~mod~$k$.
Since $0\in B_1+\ldots+B_m$, the sumset $B_1+\ldots+B_m$ contains $\ell$~mod~$k$ as well.

On the other hand, if $\Stab(B_1+\ldots+B_m)$ is trivial, then we have 
\[|B_1+\ldots+B_m| \ge |B_1|+\ldots+|B_m| - (m-1) = |A_1|+\ldots+|A_m| + 1,\]
and removing $0$ from the sumset $B_1+\ldots + B_m$ yields the desired bound.
\end{proof}

An easy corollary of the above lemma is the following.
\begin{cor}\label{cor:additive}
Let $\ell$ be a multiple of $12$ and $k\geq5$ the smallest integer that does not divide $\ell$. 
If $S$ is a multiset of $k-1$ non-zero reminders modulo $k$, then there exists $S' \subseteq S$ such that 
\[\ell \equiv \sum_{x \in S'} x \mod k.\]
\end{cor}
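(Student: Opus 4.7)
The plan is to apply \lemref{lem:additive} directly, taking each $A_i$ to be a singleton. Given the multiset $S = \{s_1, s_2, \ldots, s_{k-1}\}$ of non-zero remainders modulo $k$, I define $A_i := \{s_i\}$ for $i = 1, \ldots, k-1$. Then each $A_i$ is a non-empty set of non-zero remainders, so \lemref{lem:additive} applies with $m = k-1$, and $|A_1|+\ldots+|A_{k-1}| = k-1$.

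Next I observe that picking a remainder from some subset of the $A_i$'s and summing corresponds exactly to choosing a sub-multiset $S' \subseteq S$ and forming $\sum_{x \in S'} x$; singletons leave no other freedom. So \lemref{lem:additive} tells us that either some such $S'$ already satisfies $\sum_{x \in S'} x \equiv \ell \pmod k$, which is exactly the desired conclusion, or the collection of attainable sums contains at least $k-1$ distinct non-zero remainders modulo $k$.

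In the second case, I use the trivial observation that $\Z_k \setminus \{0\}$ has exactly $k-1$ elements. Thus the set of attainable non-zero sums must equal all of $\Z_k \setminus \{0\}$. Since $k \nmid \ell$ by the choice of $k$, the residue $\ell \bmod k$ is non-zero and therefore lies in this set, so again some $S' \subseteq S$ realizes it as a sum.

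I do not anticipate any real obstacle here — the entire argument is a one-line reduction to \lemref{lem:additive} together with the pigeonhole observation that $k-1$ distinct non-zero residues exhaust $\Z_k \setminus \{0\}$. The only minor point to be careful about is the multiset-versus-set distinction, which is handled cleanly by taking singletons $A_i = \{s_i\}$ so that repeated elements of $S$ appear in separate $A_i$'s.
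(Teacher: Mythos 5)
Your proof is correct and is exactly the intended argument: the paper states this as an immediate consequence of \lemref{lem:additive} without writing out details, and taking singletons $A_i=\{s_i\}$ so that the "at least $k-1$ distinct non-zero remainders" alternative forces all of $\Z_k\setminus\{0\}$, including $\ell \bmod k \neq 0$, is precisely the reduction the authors have in mind.
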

We note that this corollary is also a consequence of~\cite{AloF88}*{Lemma 4.2}.

\section{Constructions of $C_\ell$-free graphs with the best possible semidegree}\label{sec:constructions}
In this section, we show that the minimum semidegree threshold in \thmref{thm:main} is best possible.
Given a length of the forbidden cycle $\ell$, recall that $k$ denotes the smallest integer greater than $2$ that does not divide $\ell$.
We describe the desired constructions separately for odd and even values of $k$.

\subsection{The value of $k$ is odd}

We shall construct an $n$-vertex graph $G$ with no $C_\ell$ and minimum semidegree equal to $\delta^\pm(G) = \frac{n}{k}+\frac{k-3}{2k}$. 
In particular, $n + (k-3)/2$ must be divisible by $k$. 
Consider first a balanced blow-up of $C_k$ on $n+(k-3)/2$ vertices, which has both the correct minimum semidegree and contains no $C_\ell$. 
Our aim is to remove $(k-3)/2$ vertices from the blow-up with neither changing the minimum semidegree nor creating $C_\ell$.
In order to do so, we use $(k-3)/2$ times one of the three \emph{maneuvers} described in \figref{fig:maneuvers_odd}.

\begin{figure}[ht]
{\hfill
\includegraphics{figures/figure-3} \hfill \includegraphics{figures/figure-0} \hfill \includegraphics{figures/figure-1}
\hfill}
\caption{Maneuvers used when $k$ is odd. In the first one, adding the middle vertex and incident edges, which creates cycles of length $(k+1)/2$ and $(k-1)/2$, allows to remove a vertex in the leftmost blob and in the rightmost blob. In the remaining two maneuvers, adding edges to an arbitrary vertex in each of the two bottommost blobs allows to remove a vertex in the topmost blob.}\label{fig:maneuvers_odd}
\end{figure}

Before applying any of the maneuvers to the blow-up, one can obtain only cycles of lengths congruent to $0$ mod $k$. 
An application of the first maneuver allows us to change the remainder modulo $k$ of an obtainable cycle length by $(k+1)/2$ or $(k-1)/2$.
Similarly, applying the second maneuver allows us to change the remainder of an obtainable cycle length twice by $(k-1)/2$.
Finally, the third maneuver allows us to change the remainder twice by $(k+1)/2$.

Let $r := \ell$ mod $k$, and consider the following $(k-3)/2$ applications of the maneuvers:
\begin{itemize}
\item If $0<r<\frac k4$, then apply $2r-1$ times the first maneuver and $(k-1)/2-2r$ times the second one.
\item If $\frac k4 < r < \frac k2$, then apply $k-1-2r$ times the first maneuver and $2r-(k+1)/2$ times the third one.
\item If $\frac k2 < r < \frac{3k}4$, then apply $2r-k-1$ times the first maneuver and $(3k-1)/2-2r$ times the second one.
\item If $\frac{3k}4 < r < k$, then apply $2k-1-2r$ times the first maneuver and $2r-(3k+1)/2$ times the third one. 
\end{itemize}

Clearly, each application of a maneuver reduces the number of vertices by one, thus the resulting graph has $n$ vertices. We assert that it also does not contain $C_\ell$.
Let us discuss in detail only the case $0<r<\frac k4$ since the other three cases are resolved analogously.
In this case, any obtainable cycle length has the remainder modulo~$k$ equal to $a \cdot \frac{k+1}{2} + b \cdot \frac{k-1}{2}$ for some $a \in \{0, 1, \ldots, 2r-1\}$ and $b \in \{0,1, \ldots, k-2r-2\}$.
In order to obtain the remainder~$r$, the equality $a \cdot \frac{k+1}{2} + b \cdot \frac{k-1}{2} \equiv r$ mod~$k$ must hold. However, this is equivalent to $a-b \equiv 2r$ mod~$k$, which cannot be satisfied.
Therefore, the resulting graph does not contain a cycle of length $\ell$.

\subsection{The value of $k$ is even}

In this case, $k$ must be a power of $2$. 
Observe that if $\ell$ is even then $\ell \equiv \frac{k}{2}$ mod $k$, and
if $\ell$ is odd then $k=4$.
Also note that if $\ell \equiv 3$ mod $4$, then the optimal construction for \thmref{thm:main} is simply a balanced blow-up of~$C_4$. 
For the rest of this section, we will assume $\ell\not\equiv 3$ mod $4$, and our aim will be to construct an $n$-vertex graph without $C_\ell$ that has minimum semidegree $\delta^\pm(G) = \frac{n}{k}+\frac{k-2}{2k}$. 

Analogously to the case when $k$ is odd, we start with a balanced blow-up of $C_k$ on $n+(k-2)/2$ vertices, and apply a sequence of $(k-2)/2$ specific maneuvers.

\begin{figure}[ht]
{\hfill
\includegraphics{figures/figure-2} \hfill \includegraphics{figures/figure-4}
\hfill}
\caption{Maneuvers used when $k$ is even. In the first one, adding the middle vertex and incident edges allows to remove one vertex in each of the two bottommost blobs. In the other one, adding the middle two vertices and incident edges allows to remove a vertex in each blob except the bottom right one.}\label{fig:maneuvers_even}
\end{figure}

Again, a single application of a maneuver decreases the total number of vertices by one.
Now consider the following $(k-2)/2$ applications of the maneuvers based on the two possible relations between $k$ and $\ell$:
\begin{itemize}
\item If $\ell \equiv k/2$ mod $k$, then applying $(k-2)/2$ times the first maneuver yields a graph where any obtainable cycle length modulo $k$ is equal to $m$ or~$-m$ for some \hbox{$m\le (k-2)/2$}. Therefore, the resulting graph contains no $C_\ell$. 

\item If $\ell\equiv 1$ mod $4$, then a single application of the second maneuver yields a graph where every cycle has length $0$, $2$, or $3$ mod $4$. In particular, it does not contain $C_\ell$.
\end{itemize}

\section{Semidegree threshold of $C_\ell$ with $k \ge 5$}\label{sec:kbig}

Throughout the whole section, we assume that $k\ge5$ and $k$ is defined from $\ell$ as in the statement of \thmref{thm:main}.
In particular, $\ell$ is divisible by all the numbers from $3$ to $k-1$ and not divisible by $k$. Also, $k$ needs to be a prime power. 

We start with proving \thmref{thm:kbig_walk}, which is for closed walks instead of cycles, but on the other hand, it gives an additional structural information. 

\begin{proof}[Proof of Theorem~\ref{thm:kbig_walk}]
The proof contains two major claims about graph $H$: one bounds the maximal possible length of a minimal path between any two vertices, and the other one forbids certain subgraphs.
Using the two, we will be able to understand the structure of $H$ so that we can prove the theorem.

Firstly, we use \lemref{lem:girth} to find a short path between any two vertices of $H$. 

\begin{claim}\label{cla:diameter}
For any vertices $x, y \in V(H)$ there exists a directed path from $x$ to $y$ of length at most $10\lfloor\frac{k-1}{3}\rfloor$.
Additionally, for $k=7$, there exists such a path of length at most $12$.
\end{claim}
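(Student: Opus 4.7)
The plan is to show that for any $x, y \in V(H)$, the sets $X_i := \{v : d(x, v) \le i\}$ and $Y_j := \{v : d(v, y) \le j\}$ each exceed $n/2$ when $i = j = 5m$, where $m := \lfloor (k-1)/3 \rfloor$; their intersection then yields a directed path from $x$ to $y$ of length at most $10m = D$.

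The first ingredient I will establish is that the girth of $H$ is at least $k$. Since $k$ is the smallest integer greater than $2$ not dividing $\ell$, every integer $d \in \{3, \ldots, k-1\}$ divides $\ell$, so any directed $d$-cycle in $H$ traversed $\ell/d$ times would be a closed walk of length $\ell$, contradicting the hypothesis. Because $3m \le k-1$, each induced subgraph $H[X_i]$ therefore contains no directed cycle of length $\le 3m$, and applying the contrapositive of \lemref{lem:girth} to $H[X_i]$ produces a vertex $x_i \in X_i$ with $|N^+(x_i) \cap X_i| < (1 - 0.6535^{1/m})|X_i|$. Combining with $|N^+(x_i)| \ge \frac{n}{k}(1 - \frac{1}{30k})$, I obtain the expansion
\[
|X_{i+1}| \;>\; \frac{n}{k}\!\left(1 - \tfrac{1}{30k}\right) + 0.6535^{1/m}\,|X_i|,
\]
and iterating from $|X_1| \ge \frac{n}{k}(1 - \frac{1}{30k})$ will yield
\[
|X_{5m}| \;\ge\; \frac{n}{k}\!\left(1 - \tfrac{1}{30k}\right)\frac{1 - 0.6535^{5}}{1 - 0.6535^{1/m}},
\]
which exceeds $n/2$ for every $k \ge 5$. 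The symmetric argument on inneighborhoods gives $|Y_{5m}| > n/2$, so $X_{5m} \cap Y_{5m} \ne \emptyset$, producing a directed path from $x$ to $y$ of length at most $10m = D$.

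For the sharper bound when $k = 7$, I will exploit that $\ell$ is divisible by $\operatorname{lcm}(3,4,5,6) = 60$, hence by $12$, so $H$ contains no $C_{12}$. Replacing \lemref{lem:girth} by \corref{cor:flags}(2) in the expansion argument gives the stronger recursion $|X_{i+1}| > \frac{n}{k}(1 - \frac{1}{30k}) + 0.7999\,|X_i|$; six iterations suffice to push $|X_6|$ past $n/2$, and together with $|Y_6| > n/2$ this yields a path of length at most $12$.

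The main obstacle I expect is that the geometric-series estimate is fairly tight: one has to verify the numerical inequality $k\,(1 - 0.6535^{1/m}) < 1.7612\,(1 - \tfrac{1}{30k})$ by hand for each of the smallest relevant values $k \in \{5, 7, 8, 9\}$, before the asymptotic regime in $k$ takes over and provides comfortable margins.
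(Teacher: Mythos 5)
Your proposal is correct and follows essentially the same route as the paper: iterate the out-neighbourhood expansion driven by \lemref{lem:girth} (using that $H$ has girth at least $k$, hence $H[X_i]$ has no cycle of length at most $3m$), reach $|X_{5m}|>n/2$ via the same geometric-series bound, and meet a symmetric in-neighbourhood expansion from $y$; the numerical check at the smallest prime powers $k\in\{5,7,8,9\}$ plus monotonicity in $k$ within each residue class mod $3$ is exactly what the paper does. The only cosmetic difference is in the $k=7$ refinement, where you invoke \corref{cor:flags} via $C_{12}$-freeness to get the constant $0.2001$, while the paper simply observes that its original recursion with $m=2$ already exceeds $n/2$ at step $i=6$; both give $|X_6|>n/2$ and hence a path of length at most $12$.
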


\begin{proof}
We follow the main idea used in the proof of \lemref{lem:girth}.
Let $m:=\lfloor\frac{k-1}{3}\rfloor$, \hbox{$c:= 1 - 0.6535^{1/m}$}, and fix an arbitrary vertex $x\in V(H)$.
We define $X_1 := N^+(x)$, and $X_{i+1} := X_i \cup N^+(X_i)$ for every $i\in[5m-1]$.

For each $i \in [5m]$, the induced graph $H[X_i]$ cannot contain a directed cycle of length at most $3m \le k-1$, as otherwise $H$ contains a closed walk of length $\ell$.
Therefore, there exists $x_i \in X_i$ such that $|N^+(x_i) \cap X_i| < c|X_i|$ by \lemref{lem:girth}, and 
\[|X_{i+1}| > |X_i| + |N^+(x_i)| - c |X_i| \geq \frac{n}{k}\left(1-\frac{1}{30k}\right) + (1-c)|X_i|.\]
Combining this estimate with $|X_1| \geq \frac{n}{k}(1-\frac{1}{30k})$ yields that
\[|X_{i}| > \frac{n}{k}\left(1-\frac{1}{30k}\right)\frac{1 - (1-c)^{i}}{c} = \frac{n}{k}\left(1-\frac{1}{30k}\right)\frac{1 - 0.6535^{i/m}}{1 - 0.6535^{1/m}}.\]
In particular,
\begin{equation}\label{eq:diameter:X5m}
|X_{5m}| > \frac{n}{k}\left(1-\frac{1}{30k}\right)\frac{1 - 0.6535^5}{1 - 0.6535^{1/m}}.
\end{equation}

Recall that $m=\lfloor\frac{k-1}{3}\rfloor$.
For $k \in \{5,7,9\}$ we get $|X_{5m}| > 0.505n$, $|X_{5m}| > 0.653n$ and $|X_{5m}| > 0.508n$, respectively.
Since~\eqref{eq:diameter:X5m} is ascending in terms of $k$ when considered separately for each reminder modulo~3, we have that $|X_{5m}|>n/2$ for every considered value of $k$.
Note that we used the fact $k \neq 6$ since $k$ must be a prime power.
Therefore, there are paths of length at most $5\lfloor\frac{k-1}{3}\rfloor$ from $x$ to more than half of the vertices of the graph~$H$. 

Now, fix $y \in V(H)$ arbitrarily.
Using the bound on the minimum indegree, we analogously find paths of length at most $5\lfloor\frac{k-1}{3}\rfloor$ to $y$ from more than half of the vertices of $H$.
Therefore, there is a path from $x$ to $y$ of length at most $10\lfloor\frac{k-1}{3}\rfloor$. 

In order to show the additional part of the statement, we notice that in the case $k=7$ we have $|X_{i}| > 0.534n > n/2$ already when $i=6$ (note that $5m=10$ in that case).
This improvement yields a path of length at most $12$ from any $x \in V(H)$ to any $y \in V(H)$.
\end{proof}
Note that we established a bound valid also for small values of $k$, which is highly overestimating for big~$k$.
In particular, one can obtain better bound (below $1.6k$) assuming $k$ is large enough.
Nevertheless, for our purposes, the bound proved in \claref{cla:diameter} is going to be sufficient. 

The next major step in the proof is forbidding shortcuts of a limited length.
Recall that an $s$-shortcut is a directed path of length $s$ with an additional edge from its first vertex to the last one.
\begin{claim}\label{cla:shortcuts}
$H$ does not contain an $s$-shortcut for any $s \le \lfloor\frac{k}{2}\rfloor +1$.
\end{claim}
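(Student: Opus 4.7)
Suppose for contradiction that $H$ contains an $s$-shortcut with source $u$ and sink $v$, for some $2 \le s \le \lfloor k/2 \rfloor + 1$. The plan is to produce a closed walk of length exactly $\ell$ in $H$, contradicting the hypothesis of \thmref{thm:kbig_walk}.

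First, I would apply \claref{cla:diameter} to find a directed path $P$ from $v$ back to $u$ of length $d \le 10\lfloor (k-1)/3 \rfloor$. Combining $P$ with each of the two $u$-$v$-paths provided by the shortcut (the direct edge of length $1$ and the internal path of length $s$) produces two directed cycles $C_1,C_2$ through $u$ of lengths $L_1:=d+1$ and $L_2:=d+s$ respectively, whose lengths differ by exactly $s-1$. Concatenating arbitrary copies of $C_1$ and $C_2$ at the common vertex $u$ then realizes every element of the numerical semigroup $\langle L_1, L_2\rangle$ as the length of some closed walk in $H$.

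The divisibility is favorable: since $s-1 \le \lfloor k/2 \rfloor \le k-1$ and $\ell$ is divisible by every integer in $\{2,3,\ldots,k-1\}$, we have $(s-1)\mid\ell$, and hence $\gcd(L_1,L_2)=\gcd(d+1,s-1)$ divides $\ell$ as well. The Sylvester--Frobenius theorem then places $\ell$ in $\langle L_1, L_2\rangle$ as soon as $\ell$ exceeds $L_1 L_2/\gcd(L_1, L_2)$, a quantity of order $O(k^2)$. For $k$ large enough, the bound $\ell \ge \mathrm{lcm}(2,\ldots,k-1)$ grows super-polynomially in $k$ and easily dominates this threshold, producing the desired closed walk of length $\ell$ and hence the sought contradiction.

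The main obstacle is handling the small prime-power values $k \in \{5,7,8,9\}$, where $\ell$ may fall below the Frobenius threshold of $\langle L_1,L_2\rangle$ and the two cycles $C_1,C_2$ alone no longer suffice. In these cases I would extract more from the hypothesis: the absence of any closed walk of length $\ell$ in $H$ forbids a directed cycle of any length that divides $\ell$, while the minimum semidegree assumption combined with \lemref{lem:girth} (and, in the tightest regime, \corref{cor:flags}) forces additional short directed cycles of controlled lengths to appear in $H$. Adjoining these to $\langle L_1, L_2\rangle$, and using \corref{cor:additive} to combine their residues modulo $k$ with $s-1$, enriches the semigroup sufficiently to make $\ell$ representable. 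A careful case analysis in the parameters $k$, $d$, $s$ and $\ell \bmod k$ completes the argument.
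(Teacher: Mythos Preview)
Your large-$k$ argument is exactly the paper's: path back from $v$ to $u$ of length $d$ via \claref{cla:diameter}, two closed walks through $u$ of lengths $d+1$ and $d+s$, then Sylvester--Frobenius together with $\gcd(d+1,d+s)\mid (s-1)\mid \ell$ and $\ell \ge \mathrm{lcm}(2,\dots,k-1)$ growing faster than the $O(k^2)$ Frobenius threshold. The paper makes this explicit for $k\ge 12$, and the checks for $k\in\{8,9,11\}$ are short numerical verifications (for $k=8$ one additionally notes that $d\in\{19,20\}$ would already create $C_{20}$ or $C_{21}$, both of which divide $\ell$).

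The genuine gap is in your treatment of $k\in\{5,7\}$. You propose to find ``additional short directed cycles of controlled lengths'' via \lemref{lem:girth} or \corref{cor:flags} and ``adjoin'' them to $\langle L_1,L_2\rangle$. But those tools only guarantee short cycles \emph{somewhere} in $H$, not through the vertex $u$; closed walks can only be concatenated at a shared vertex, so an arbitrary cycle elsewhere in $H$ does not enlarge the semigroup of lengths realizable by closed walks through $u$. Likewise, \corref{cor:additive} operates on residues modulo $k$ and does not by itself yield an exact length-$\ell$ closed walk; even with the right residue you still need the semigroup to reach $\ell$, which is precisely what fails when $\ell$ is as small as $12$ or $60$.

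What the paper actually does for $k\in\{5,7\}$ is different and more structural: it first shows that $N^+(x)$ and $N^-(x)$ are acyclic for every $x$ (a long path in $N^+(x)$ together with the diameter bound would already produce a closed walk of length $\ell$), which gives a vertex in $N^+(x)$ with no outneighbours in $N^+(x)$. This feeds back into the expansion argument of \claref{cla:diameter} and \emph{sharpens the diameter bound} (to $10$ when $k=7$ and to $6$ when $k=5$, the latter using \corref{cor:flags}). With the improved diameter the shortcut argument goes through after excluding a couple of specific cycle lengths ($C_{11}$ for $k=7$, $C_7$ for $k=5$), each of which is forbidden because its presence together with an already-known short cycle through every vertex would produce a closed walk of length $\ell$. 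So the missing idea is: improve the diameter first, rather than trying to enrich the semigroup with cycles that may not touch $u$.
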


\begin{proof}
Since the cases $k=5$ and $k=7$ need a more careful analysis, we split the proof into three parts.

\smallskip\noindent
\textbf{Case $k \ge 8$.}
\smallskip

Suppose for contradiction that the graph $H$ contains an $s$-shortcut with $s \le \lfloor\frac{k}{2}\rfloor +1$, and let $u$ and $v$ be its source and its sink vertex, respectively.
From \claref{cla:diameter}, there is a directed path $P$ of length at most $10\lfloor\frac{k-1}{3}\rfloor$ from $v$ to $u$.
Together with the vertices of the $s$-shortcut, we obtain two closed walks of length $|P|+1$ and $|P|+s$. 

Sylvester's solution~\cite{Sylv1882} of the well-known Frobenius Coin Problem asserts that if an integral multiple of $\gcd(a,b)$ cannot be obtained as an integer-weighted sum of positive integers $a$ and $b$, then it is at most $(a-1)(b-1)-1$.
Since \[\gcd(|P|+1, |P|+s) \le s-1 < k-1,\] we conclude that $\ell$ is divisible by $\gcd(|P|+1, |P|+s)$. 
We also have
\[|P| \cdot (|P|+s-1) \le \frac{10}{3}(k-1) \cdot \left(\frac{10}{3}(k-1)+ \frac{k}{2}+1\right) < 13k^2.\]
Therefore, $\ell < 13k^2$ as otherwise $H$ would contain a closed walk of length $\ell$.

We know that $\ell$ is greater than or equal to the least common multiple of all the integers less than $k$.
It is known (see, e.g., \cite{Nai82}) that this is greater than $2^{k-1}$ for $k\ge 8$.
Since $2^{k-1} > 13k^2$ for $k\ge 12$, we need to address only the cases $k=8$, $k=9$, and $k=11$.
If $k=9$ or $k=11$, then the smallest possible value of $\ell$ is $840 > 20\cdot24$ or $2520 > 30\cdot35$, respectively.
In particular, in both cases, we conclude that $H$ contains a closed walk of length~$\ell$; a contradiction.

It remains to consider the case $k=8$, when $\ell$ is divisible by $420$ and $|P| \le 20$.
However, $|P|$ cannot be equal to $20$ or $19$, because it would create $C_{21}$ or $C_{20}$, respectively, which readily yields a closed walk of length $\ell$.
Therefore, $|P|(|P|+s-1) \le 18\cdot22 = 396 < 420$, contradicting that $H$ contains no closed walk of length $\ell$.

\smallskip\noindent
\textbf{Case $k=7$.}
\smallskip

This is the case of all $\ell$ divisible by $60$ and not divisible by $7$. It is enough to prove it for $\ell=60$. 

Firstly, we will observe that for any $x\in V(H)$ the sets $N^+(x)$ and $N^-(x)$ do not contain directed cycles. By symmetry, it is enough to prove it for $N^+(x)$. Suppose on the contrary that $N^+(x)$ contains a cycle, and so it contains a directed path $P$ of length~6, say from $u$ to $v$. From \claref{cla:diameter}, there is a directed path $R$ of length at most 12 from $v$ to $x$. Notice, that together with vertices from $P$ it creates directed cycles of lengths $|R|+1, |R|+2, \ldots, |R|+7$. In each possible case of the length of $R$, one can find $C_6$, $C_{12}$ or $C_{15}$, in particular a closed walk of length $\ell$, which gives a contradiction. Since the set $N^+(x)$ does not contain directed cycles, it contains a vertex without outneighbors in $N^+(x)$. Similarly, the set $N^-(x)$ contains a vertex without inneighbors in $N^-(x)$.

Knowing this, we can improve our bound in \claref{cla:diameter} and prove that for any vertices $x, y \in V(H)$ there is a directed path of length at most $10$ from $x$ to $y$. 
As previously, we define $X_1 = N^+(x)$ and $X_{i+1} = X_i \cup N^+(X_i)$ for $i \ge 1$. We have $|X_1| \ge \frac{n}{7}(1-\frac{1}{210})$. From the above observation, in $X_1$, there is a vertex $x_1$ such that $N^+(x_1) \cap X_1 = \emptyset$, so $|X_2|\ge \frac{2n}{7}(1-\frac{1}{210})$. 
Now, for each $i \ge 2$, we use \lemref{lem:girth} for $m=2$ for the induced graph $H[X_i]$ obtaining $x_i \in X_i$ such that $|N^+(x_i) \cap X_i| < c|X_i|$, where $c = 1 - \sqrt{0.6535} < 0.1917$. Therefore, $|X_3| > 0.372n$, $|X_4| > 0.4428n$ and $|X_5| > n/2$.
Analogously, as in the end of the proof of \claref{cla:diameter}, it means that there is a directed path of length at most $10$ from $x$ to $y$.

Using this observation for $x=y$, we get that each vertex needs to appear in a cycle of length at most $10$. Since there is no closed walk of length $60$, the only possible such short cycles are of length $7$, $8$, and $9$. Now notice that $60$ can be expressed as a sum of multiples of $11$ and multiples of each of the numbers $7$, $8$ and $9$, and so an existence of $C_{11}$ would create a closed walk of length $60$. Thus, there is no $C_{11}$ in $H$.

We can now finish the proof as in the case $k \ge 8$. Consider an $s$-shortcut for $s \le 4$ with $u$ being the source vertex and $v$ being the sink vertex. From the above observations there is a directed path $P$ of length at most $10$ from $v$ to $u$. Together with the vertices of the $s$-shortcut, we obtain two closed walks of length $|P|+1$ and $|P|+s$. In each possible case of the length of $P$ and $s \le 4$ we have a closed walk of length $60$ or $C_{11}$, which is forbidden. 

\smallskip\noindent
\textbf{Case $k=5$.}
\smallskip

This is the case of all $\ell$ divisible by $12$ and not divisible by $5$. It is enough to prove it for $\ell=12$. We follow the lines of the proof for the case $k=7$.

Firstly, we will prove that for any $x\in V(H)$ the sets $N^+(x)$ and $N^-(x)$ do not contain directed cycles. By symmetry, it is enough to prove it for $N^+(x)$. Suppose on the contrary that $N^+(x)$ contains a cycle, and so it contains a directed walk $P$ of length 5, say from $u$ to $v$ (we allow that $u=v$). From \claref{cla:diameter}, there is a directed path $R$ of length at most 10 from $v$ to $x$. Notice, that together with vertices from $P$ it creates closed walks of lengths $|R|+1, |R|+2, \ldots, |R|+6$. In each possible case of the length of $R$, one can find a closed walk of length $12$, which gives a contradiction. Since the set $N^+(x)$ does not contain directed cycles, it contains a vertex without outneighbors in $N^+(x)$. Similarly, the set $N^-(x)$ contains a vertex without inneighbors in $N^-(x)$.

Knowing this, we can improve our bound in \claref{cla:diameter} and prove that for any vertices $x, y \in V(H)$ there is a directed path of length at most $6$ from $x$ to $y$. 
As previously, we define $X_1 = N^+(x)$ and $X_{i+1} = X_i \cup N^+(X_i)$ for $i \ge 1$. We have $|X_1| \ge \frac{n}{5}(1-\frac{1}{150})$. From the above observation, in $X_1$, there is a vertex $x_1$ such that $N^+(x_1) \cap X_1 = \emptyset$, so $|X_2|\ge \frac{2n}{5}(1-\frac{1}{150})$. 
Now, we use \corref{cor:flags} for the induced graph $H[X_2]$ obtaining $x_2 \in X_2$ such that $|N^+(x_2) \cap X_2| < 0.24|X_2|$. Therefore, $|X_3| > n/2$. Analogously as in the end of the proof of \claref{cla:diameter}, it means that there is a directed path of length at most~$6$ from $x$ to $y$.

Using this observation for $x=y$ we get that each vertex needs to appear in a cycle of length at most $6$. Since there is no closed walk of length $12$, each vertex appears in $C_5$. Thus, there is no $C_7$ in $H$.

We can now finish the proof as in the previous cases. Consider an $s$-shortcut for $s \le 3$ with $u$ being the source vertex and $v$ being the sink vertex. From the above observations there is a directed path $P$ of length at most~$6$ from $v$ to $u$. Together with the vertices of the $s$-shortcut, we obtain two closed walks of length $|P|+1$ and $|P|+s$. In each possible case of the length of $P$ and $s \le 3$ we have a closed walk of length $12$ or $C_7$, which is forbidden. 
\end{proof}

We are now ready to analyze the structure of graph $H$ and finish the proof of \thmref{thm:kbig_walk}.

Take any directed path $x_1x_2\ldots x_{k-1}$ of length $k-2$ in $H$ and consider the following $k+1$ sets: for $1\le i \le \lfloor k/2 \rfloor+1$ the sets $N^-(x_i)$, and for $\lfloor k/2 \rfloor \le j \le k-1$ the sets $N^+(x_j)$. The sum of their sizes is above $n$, so they need to have a non-empty intersection. Notice that if for $i$ and $i'$ any two sets $N^-(x_i)$ and $N^-(x_{i'})$ intersect, then we obtain an $s$-shortcut, where $s$ is at most $\lfloor k/2 \rfloor+1$ contradicting \claref{cla:shortcuts}. From the same reason, the sets $N^+(x_j)$ and $N^+(x_{j'})$ cannot intersect for any $j$ and $j'$. If the sets $N^-(x_i)$ and $N^+(x_j)$ intersect, then we obtain a forbidden $2$-shortcut or a directed cycle of length at most $k$. Since in $H$ there are no cycles of length less than $k$, there must be always $C_k$. 

Take such a directed $k$-cycle $y_1y_2\ldots y_k$ and define sets $Y_i=N^+(y_i)$. From now on, we denote them cyclically, in particular, by $Y_{0}$ we understand $Y_k$. Notice that they are disjoint, because otherwise we obtain a shortcut forbidden by \claref{cla:shortcuts}. Let $T$ be the set of the remaining vertices of $H$. We know that 
$$|T| \le n - k\left(\frac{n}{k}\left(1-\frac{1}{30k}\right) \right) = \frac{n}{30k}.$$

For any $y_i$ the set $N^-(y_i)$ must be contained in $Y_{i-2}\cup T$, because otherwise we obtain a cycle of length less than $k$ or a forbidden $2$-shortcut. Moreover, a vertex in $T$ cannot have edges to two different vertices $y_i$ and $y_{i'}$, because it creates a forbidden shortcut. Therefore, we can move from $\bigcup Y_i$ to $T$ at most $n/30k$ vertices to ensure that for all $i$, all vertices of $Y_{i-2}$ have an edge to $y_i$. 

Now, consider any vertex in $Y_i$. It can have edges only to $Y_{i+1}\cup T$ and edges from $Y_{i-1}\cup T$; otherwise, it creates forbidden cycles or shortcuts. In particular, any vertex in $Y_i$ has edges to at least $\frac{n}{k}\left(\frac{14}{15}-\frac{1}{30k}\right) > \frac{13}{15}\frac{n}{k}$ vertices of $Y_{i+1}$. Similarly from $Y_{i-1}$.

We will now describe a procedure that moves one-by-one every vertex $t \in T$ to one of the sets $Y_i$ for some $i \in [k]$,
at each step maintaining the following conditions for every $j \in [k]$ and every vertex $v\in Y_j$:
\begin{itemize}
\item $v$ has edges only to $Y_{j+1} \cup T$ and hence $N^+(v) \cap Y_{j+1} > \frac{13}{15}\frac{n}{k}$, and
\item $v$ has edges only from $Y_{j-1} \cup T$ and hence $N^-(v) \cap Y_{j-1} > \frac{13}{15}\frac{n}{k}$.
\end{itemize}
At the end of this procedure, the set $T$ will be empty so the vertex-partition $(Y_i)$ witnesses that $H$ is a subgraph of a blow-up of $C_k$, that is, $H$ is homomorphic to $C_k$.

Take any vertex $v \in T$.
Since $|T| \le n/15k$, the vertex $v$ needs to have more than $\frac{13}{15}\frac{n}{k}$ outneighbors in $\bigcup Y_i$.
If $v$ has edges to two non-consecutive sets $Y_j$ and $Y_{j'}$ then it creates a forbidden shortcut, so it can have edges only to $Y_j$ and $Y_{j+1}$ for some \hbox{$j \in[k]$}.
Now suppose for contradiction it has edges to both, and let $w \in N^+(v) \cap Y_j$ and \hbox{$u \in N^+(v) \cap Y_{j+1}$}.
$N^+(w) \cap N^+(v) = \emptyset$ because the graph $H$ has no $2$-shortcut. However, \hbox{$|N^+(w) \cap Y_{j+1}| > \frac{13}{15}\frac{n}{k}$}, thus $|N^+(v) \cap Y_{j+1}| < \frac{3}{15}\frac{n}{k}$.
Similarly, $N^-(u) \cap N^+(v) = \emptyset$, but $|N^-(u) \cap Y_j| > \frac{13}{15}\frac{n}{k}$, thus $|N^+(v) \cap Y_j| < \frac{3}{15}\frac{n}{k}$.
In total, we have $|N^+(v)| < \frac{6}{15}\frac{n}{k}$; a contradiction.

By the same reasoning applied to the inneighborhood of $v$, we conclude there are indices $i,o \in [k]$ such that $v$ has inneighbors and outneighbors only inside the sets $Y_i$ and $Y_o$, respectively.
In the case $o=i$, we can find a $3$-shortcut contradicting \claref{cla:shortcuts}.
If $o=i+1$ or $o=i-1$ then there is an edge between $N^+(v)$ and $N^-(v)$ which together with~$v$ creates a $2$-shortcut or a directed triangle.
Finally, in the other cases with $o \neq i+2$, we can find a directed cycle of length less than $k$.
Therefore, $o=i+2$ so the vertex $v$ can be moved to $Y_{i+1}$ without violating the conditions of the procedure.

We showed that the graph $H$ is homomorphic to $C_k$, which finishes the proof of \thmref{thm:kbig_walk}. 
\end{proof}

Using \thmref{thm:kbig_walk}, we are ready to prove \thmref{thm:main} for $k \ge 5$.
Recall that $\ell$ is an integer divisible by all positive integers less than $k$ but not divisible by $k$, and
suppose there exists a $C_\ell$-free graph $G$ on $n$ vertices with $\delta^\pm(G) \ge \frac{n}{k}+\frac{k-1}{2k}$ for infinitely many $n$. 
Since every closed walk of length $\ell$ in $G$ contains at most $\ell-1$ vertices, there are at most $n^{\ell-1}$ closed walks of length $\ell$ in $G$. 
Therefore, by the moreover part of \propref{prop:removallemma}, one can remove $o(n)$ vertices and $o(n^2)$ edges from $G$ in order to find its subgraph $H$ that satisfies the assumptions of \thmref{thm:kbig_walk}.
Thus, the vertices of the initial graph $G$ can be partitioned into sets $T$ and blobs $X_i$ for $1\le i \le k$, where $|T|=o(n)$, $|X_i| = \frac{n}{k} + o(n)$ and each vertex in blob $X_i$ has edges to all but $o(n)$ vertices in $X_{i+1}$ and from all but $o(n)$ vertices in $X_{i-1}$.

Let us refer to the edges inside one blob or between two blobs that do not fulfill the cycle order as \emph{extra} edges. 
Notice that if we have a set of at least $k-1$ disjoint extra edges, then using \corref{cor:additive} we can connect some of them by going around the $k$ blobs and using one extra edge each time (making at most $k+1$ steps between them).
In particular, if $\ell \geq (k-1)(k+2)$ then $G$ contains $C_\ell$.
Thus, unless $\ell = 12$ or $\ell = 24$, there cannot be such a set, and we can move at most $k-2$ vertices from the blobs to $T$ in order to remove all the extra edges. 
In the case $\ell=24$, note that $4$ disjoint edges inside some blob can also be connected to obtain $C_{24}$, so we can move at most $3$ vertices from each blob to $T$ in order to remove all the edges inside blobs.
Now, any set of $3$ disjoint extra edges between the blobs contains extra edges that can be combined by going around the blobs which would again yield $C_{24}$, hence we can move additional at most $2$ vertices from the blobs to $T$ to remove such edges.
We conclude that in case $\ell=24$, there are at most $17$ vertices that are incident to all the extra edges.
Finally, in the remaining case $\ell=12$, note that in any set of $4$ disjoint extra edges from a blob $X_i$ to a blob $X_j$ for some indices $i,j \in [k]$ such that $j \neq i+1$, one can find a subset of extra edges that can be connected by going around the blobs to construct $C_{12}$.
Thus, by moving at most $60$ vertices from blobs to $T$ we can remove all the extra edges. 
We conclude that we may assume that $H$ is an induced subgraph of $G$ on $n-o(n)$ vertices that is homomorphic to $C_k$. 

The proof of \thmref{thm:main} for $k\geq5$ follows from the next lemma. 

\begin{lemma}\label{lem:kbig_sidewalks}
Fix an integer $\ell$ that is divisible by $12$ and let $k$ be the smallest integer greater  than $2$ that does not divide $\ell$. Let $G$ be an $n$-vertex graph with $\delta^\pm(G) \geq \frac{n}{k} + \frac{k-1}{2k}$. If $G$ contains an induced subgraph on $n-o(n)$ vertices that is homomorphic to~$C_k$, then $G$ contains $C_\ell$. 
\end{lemma}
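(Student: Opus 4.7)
The overall strategy is to use the exceptional set $T$ to build \emph{shortcuts} that break the $k$-divisibility constraint of closed walks inside the blown-up $C_k$-structure on $V(G)\setminus T$, combine these shortcuts via \lemref{lem:additive} to hit the residue $\ell\bmod k$, and pad to length exactly $\ell$ with walks around the $C_k$-structure. To start, I would set up a double-counting argument: for each $v \in T$ let $A_v^{-} = \{i : N^-(v) \cap X_i \neq \emptyset\}$ and $A_v^{+} = \{i : N^+(v) \cap X_i \neq \emptyset\}$. Any $u \in X_i$ has out-neighbors only in $X_{i+1}$ or among the $T$-vertices $t$ with $i \in A_t^{-}$, so $\delta^+(u) \le |X_{i+1}| + |\{t \in T : i \in A_t^{-}\}|$; summing over $i \in [k]$ and combining with the analogous in-degree bound yields
\[
\sum_{v \in T} |A_v^{-}| \ge |T| + \tfrac{k-1}{2}
\qquad \text{and} \qquad
\sum_{v \in T} |A_v^{+}| \ge |T| + \tfrac{k-1}{2}.
\]

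Next, I would interpret each pair $(b, a) \in A_v^{-} \times A_v^{+}$ as a length-$2$ shortcut $X_b \to v \to X_a$, which contributes the residue $d = (b - a + 2) \bmod k$ to any closed walk through it. Let $D_v'$ be the set of non-zero shortcut residues at~$v$. Kneser's theorem applied to $A_v^{-} - A_v^{+}$, together with the bounds above, gives $\sum_{v \in T} |D_v'| \ge k - 1$ whenever the relevant stabilizers are trivial. Applying \lemref{lem:additive} to the sets $\{D_v'\}_{v \in T}$ then shows that $\ell \bmod k$ belongs to the sumset $\sum_{v \in T}(D_v' \cup \{0\})$: either because the sumset has at least $k$ elements (and hence equals $\Z_k$), or because its stabilizer is non-trivial and thus contains $\ell \bmod k$ by the divisibility of~$\ell$. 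This yields a subset $\{v_1,\dots,v_s\} \subseteq T$ with $s \le k-1$ and choices $(b_j, a_j) \in A_{v_j}^{-} \times A_{v_j}^{+}$ whose residues $d_j$ satisfy $\sum_{j=1}^{s} d_j \equiv \ell \pmod{k}$.

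Finally, I would assemble $C_\ell$ by, for each $j$, picking $u_j^{-} \in N^-(v_j) \cap X_{b_j}$ and $u_j^{+} \in N^+(v_j) \cap X_{a_j}$, and between consecutive shortcuts inserting a directed walk in $V(G)\setminus T$ from $u_j^{+}$ to $u_{j+1}^{-}$ of length $w_j \equiv b_{j+1} - a_j \pmod{k}$, with $\sum_j w_j = \ell - 2s$. The residue equation forces consistency modulo~$k$. Since $\ell$ is divisible by $\mathrm{lcm}(1,\dots,k-1)$---which is larger than $k^2$ once $k \ge 7$---there is enough slack to spread $C_k$-rotations across the walks; the borderline case $k=5$, where $\ell$ can be as small as $12$, is handled by optimising the choice of $(b_j, a_j)$ pairs (and potentially the subset returned by \lemref{lem:additive}) to minimise the required walk lengths. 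Since each blob has $n/k - o(n) \gg \ell$ vertices and each vertex has all but $o(n)$ of its required next-blob neighbors, distinct walk-vertices can be chosen greedily, producing a genuine $C_\ell$.

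The main obstacle is this last realisation step: orchestrating the walk lengths and blob choices so that they collectively produce a simple cycle of length exactly~$\ell$. For $k\ge 7$ the divisibility lower bound on~$\ell$ makes this essentially automatic, but for $k = 5$ one has to use both the freedom in the $(b_j, a_j)$ pairs and in the subset returned by \lemref{lem:additive} to guarantee a valid assignment of non-negative integer walk lengths summing to $\ell - 2s$.
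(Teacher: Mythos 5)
Your strategy is the same as the paper's: route length-two ``sidewalks'' through $T$, use Kneser's theorem and Lemma~\ref{lem:additive} to realise the residue $\ell\bmod k$ with at most $k-1$ of them, and pad with walks around the blobs. Your double count $\sum_{v\in T}(|A_v^-|+|A_v^+|-2)\ge k-1$ is essentially the paper's blob-transversal argument, and the assembly for $\ell\ge(k-1)(k+3)$ is fine. One repairable technicality: defining $A_v^\pm$ by nonempty intersection does not guarantee that the selected sidewalks are pairwise compatible, since several $T$-vertices could be forced onto the same blob vertex; the paper avoids this by only counting blobs containing at least $2k$ neighbours of $v$ and discarding the at most $4k^2|T|=o(n)$ stray neighbours, and you would need a similar device.

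The genuine gap is the final realisation step for $k=5$, i.e.\ $\ell=24$ and especially $\ell=12$, which is not ``handled by optimising the choice of $(b_j,a_j)$ pairs.'' For $\ell=12$ the additive argument may return four compatible sidewalks all of value $3$, and such a family genuinely cannot always be connected into a $C_{12}$: the lengths of the connecting walks are each forced modulo $5$, and for bad placements of the sidewalk endpoints their minimum total already exceeds the budget $12-2s$, with no residual freedom in the pairs or in the subset returned by Lemma~\ref{lem:additive}. The paper's resolution is not bookkeeping on the output of the additive lemma; it returns to the degree hypothesis $\delta^\pm(G)\ge\frac{n+2}{5}$, absorbs the value-$0$ vertices of $T$ into enlarged blobs, rules out value-$2$ sidewalks, and then runs a case analysis on the blob sizes to manufacture \emph{additional} sidewalks whose values admit a connectable combination (a value-$3$ with a value-$4$, two value-$1$'s, etc.). Without an argument of this kind your proof does not cover $\ell=12$; a separate (shorter) check is also needed for $\ell=24$, where the only problematic family is four value-$1$ sidewalks and one must verify these can always be ordered and connected.
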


\begin{proof}
Assume by contrary, that there exists an $n$-vertex $C_\ell$-free graph $G$ with semidegree $\delta^\pm(G) \geq \frac{n}{k} + \frac{k-1}{2k}$ for arbitrarily large $n$, and its vertices can be partitioned into set $T:=V(G)\setminus V(H)$ and sets $X_i$ for $1\le i \le k$, where $|T|=o(n)$, $|X_i| = \frac{n}{k} + o(n)$, and each vertex in $X_i$ has edges only to vertices in $X_{i+1} \cup T$ and only from vertices in $X_{i-1} \cup T$.

We refer to the sets $X_1,X_2,\dots,X_k$ as \emph{blobs} and say that a blob $X_i$ is an \emph{in-pointing} blob or an \emph{out-pointing} blob of a vertex $t\in T$ if $|N^-(t) \cap X_i| \ge 2k$ or $|N^+(t) \cap X_i| \ge 2k$, respectively.
Using just edges in $H$ one can obtain only cycles of lengths divisible by $k$.
We will use the semidegree assumption for finding sufficiently many other edges in $G$ that allows us to change the remainder modulo $k$ and construct a cycle of length $\ell$.

A \emph{sidewalk} is a path of length 2 starting and finishing in blobs that has the middle vertex in $T$.
The \emph{value} of a sidewalk is the change in the reminder modulo $k$ that is achieved by this sidewalk.
Formally, if $X_i$ is an in-pointing blob and $X_j$ is an out-pointing blob of a vertex $t\in T$, then $t$ with its corresponding neighbors create a sidewalk of value $i-j+2 \mod k$.
We say that two sidewalks are \emph{compatible} if they are either vertex disjoint or they share exactly one vertex which is then the sink of one of the sidewalks and the source of the other one.
A set of sidewalks is called compatible if and only if there exists a cycle in $G$ that contains all of them. In particular, the sidewalks are pairwise compatible. 

We say that a remainder $r \in [k-1]$ can be \emph{combined} using a set of sidewalks if $r$ can be written as a sum of the sidewalk values modulo $k$, where the sum is taken over a compatible subset of sidewalks.
Our aim is to show that the remainder $\ell$ mod $k$ can be combined using some set of sidewalks.
After we do so, then it only remains to connect the chosen sidewalks to make $C_\ell$.
The main tool we will use in combining the sidewalks is \lemref{lem:additive} and its consequence \corref{cor:additive}.
In our case it means that the remainder $\ell$ mod $k$ can be combined using any compatible set of $k-1$ sidewalks of non-zero values. Let us prove that such a set exists. 

\begin{claim}\label{cla:sidewalks_trashvertex}
If $t\in T$ has $x$ in-pointing blobs and $y$ out-pointing blobs, then we can combine the reminder $\ell$~mod~$k$ or there are at least $x+y-2$ different non-zero values of sidewalks containing~$t$.
\end{claim}

\begin{proof}
As a set $A$ take all the $x$ possible values of sidewalks using one chosen out-pointing blob. And as a set~$B$ take the set of distances in the cyclic order from the chosen out-pointing blob to all the out-pointing blobs (including himself at distance 0). The set of possible values of the sidewalks using $t$ is exactly the set $A+B$. If $\Stab(A+B)$ is non-trivial, 
then $k$ is of the form $p^m$ for some prime $p$ and integer $m$, 
and $p^{m-1}$ is in $\Stab(A + B)$. Since $\ell$ is divisible by $p^{m-1}$, it implies that $\ell$ mod $k$ is in $\Stab(A+B)$, thus one can combine the reminder $\ell$ modulo $k$. Otherwise, from Kneser's theorem (\thmref{thm:kneser}), we get that $|A+B| \ge x+y-1$ and so there are at least $x+y-2$ non-zero values of sidewalks containing $t$. 
\end{proof}

\begin{claim}\label{cla:sidewalks_trash}
There exists a compatible set of at most $k-1$ sidewalks using which one can combine the reminder $\ell$~mod~$k$.
\end{claim}

\begin{proof}
Each vertex in $T$ has less than $4k^2$ neighbors, which are not in out-pointing or in in-pointing blobs, so the total set $X$ of such vertices has size at most $4k^2|T| = o(n)$. 
Let $T' \subseteq T$ be the maximal set of vertices in $T$ creating a compatible set of sidewalks of non-zero values with vertices not from $X$. If $|T'| \geq k-1$, then from \corref{cor:additive} we have the wanted set of sidewalks, thus assume $|T'| < k-1$.  

Let $z$ be the sum of the numbers of in-pointing and out-pointing blobs of the vertices in $T'$. 
Using \claref{cla:sidewalks_trashvertex} for each vertex in $T'$ and \lemref{lem:additive}, we get that using sidewalks containing $T'$ one can combine the remainder $\ell$ mod $k$ or at least $z-2|T'|$ non-zero remainders. If $z-2|T'|\geq k-1$, then we have the remainder $\ell$ mod $k$, so assume that $k-1 > z-2|T'|$. 

Take a set of vertices that contains exactly one vertex from each blob, avoiding the set~$X$. By summing up their indegrees and outdegrees we get at least $2k\delta^\pm(G) \geq 2n + k-1$. The chosen vertices can have at most $z$ neighbors in $T'$, so to or from $T\setminus T'$ they have at least 
$$2n+k-1-z-2(n-|T|) = k-1-z+2|T| > 2(|T|-|T'|)$$ 
edges. This means that there exists a vertex $t \in T\setminus T'$ with at least $3$ neighbors among the chosen vertices. It creates a sidewalk of a non-zero value.
Since each in-pointing or out-pointing blob of $t$ contains at least $2k$ neighbors of $t$ and $|T'|<k-1$, then one can choose to the sought set sidewalks that are using different vertices in blobs for different vertices in $T'\cup\{t\}$. This yields a compatible set of sidewalks, which contradicts the maximality of $T'$. 
\end{proof}

In order to finish the proof of \lemref{lem:kbig_sidewalks}, it remains to prove that the found set of at most $k-1$ compatible sidewalks can be used to find $C_\ell$. 
We can connect the sidewalks by going around the $k$ blobs and using one sidewalk each time. Between the sidewalks we make at most $k+1$ steps, so in total we make at most $(k-1)(k+3)$. It means that if $\ell \geq (k-1)(k+3)$, which is true for any considered here $\ell$ except $12$ and $24$, then we can connect the sidewalks and obtain $C_\ell$. 

In the case $\ell=24$, which means $k=5$, if we have at most $3$ sidewalks then we can connect them to obtain~$C_{24}$, because $24 \geq 3\cdot8$. The only set of $4$ sidewalks that do not contain a smaller subset of sidewalk that can be combined to obtain the remainder $4$, is the set containing $4$ sidewalks o value $1$. One can easily check that independently how such sidewalks will be distributed, one can always connect them in a specific order to create~$C_{24}$.

In the remaining case $\ell=12$, a more detailed analysis is needed, since one can have $4$~compatible sidewalks of value $3$ that cannot be connected to create $C_{12}$. Thus, we focus now on the case $\ell=12$, and so $k=5$ and $\delta^\pm(G) \geq \frac{n+2}{5}$.

We will reduce the set $T$ to a smaller set $T'$ and extend the blobs to $X_i'$ for $i=1,\ldots,5$ by consecutively assigning vertices from $T$ to the appropriate sets based on the adjacencies to the blobs they have. Notice that there are no sidewalks of value $2$, since such a sidewalk can be easily extended to obtain $C_{12}$. Take a vertex $t \in T$ that is not creating a sidewalk of value $1$ and that creates a sidewalk of value $0$. It cannot have inneighbors and outneighbors in the same blob and it has an inneighbor in $X_{i-1}$ and an outneighbor in $X_{i+1}$ for some~$i$. Thus, extend blob $X_i$ by taking $X_i':=X_i\cup\{t\}$. Notice that there are no edges inside extended blobs and there are no edges between the consecutive blobs, that do not fulfill the cyclic order. 
Since $t\in X_i'$ has appropriate neighbors in $X_{i-1}$ and $X_{i+1}$, every vertex in $T\setminus\{t\}$ still cannot create a sidewalk of value $2$ with the extended blobs $X_i'$. Thus, we can continue this procedure and assign all vertices from $T$ that creates sidewalks of value~$0$ and do not create sidewalks of value $1$ to the respective blobs. We end up with a graph that is a blow-up of $C_4$, possibly with some edges between the non-consecutive blobs and with a set $T'$ (possibly empty) of vertices that do not create sidewalks of value~$0$ with the extended blobs, or that create sidewalks of value $1$. 

Notice that if there exist compatible sidewalks of values $3$ and $4$, or two compatible sidewalks of value $1$, then one can easily obtain $C_{12}$. 

Firstly assume that there is a blob of size smaller than $\frac{n+2}{5}-1$, say it is $X_2'$. It implies that every vertex in $X_1'$ is a source of at least two sidewalks of non-zero values and every vertex in $X_3'$ is a sink of at least two sidewalks of non-zero values. Moreover only one vertex in $T'$ (the one possible vertex creating sidewalks of value~$1$) can be counted simultaneously as outneighbor of $X_1'$ and inneighbor of $X_3'$. If we obtained this way compatible sidewalks of values $3$ and $4$, three compatible sidewalks of value $4$, or four compatible sidewalks of value $3$, then we can find $C_{12}$. The only remaining option is to have two compatible sidewalks of value $3$ and one sidewalk of value $1$. But now, if any vertex in $X_1'$ or $X_3'$ is creating more sidewalks of non-zero value, then it needs to be of value $3$ and we can obtain~$C_{12}$. So $X_2'$ needs to be of size at least $\frac{n+2}{5}-2$, be fully connected to $X_1'$ and $X_3'$, and $|T'| \geq 1$. It implies that there is a different blob smaller than $\frac{n+2}{5}$, which gives additional sidewalk of value $3$ that can be combined with the previously proven sidewalks in order to obtain $C_{12}$.

Assume now that every blob is of size at least $\frac{n+2}{5}-1$. If there is no sidewalk of value~$1$, then there are at least two blobs of size smaller than $\frac{n+2}{5}$, say $X_i'$ and $X_j'$. It implies that every vertex in $X_{i-1}'$ and $X_{j-1}'$ is the source of a sidewalk of a non-zero value and every vertex in $X_{i+1}'$ and $X_{j+1}'$ is the sink of a sidewalk of a non-zero value. This gives a set of $4$ compatible sidewalks of value $3$ or value $4$. In both cases we can obtain~$C_{12}$. Thus, assume there is a sidewalk of value $1$. It needs to use a vertex in $T'$ and two consecutive blobs, say $X_1'$ and~$X_2'$. It also implies that there are at least $3$ blobs of size smaller than $\frac{n+2}{5}$. If two of them are neither $X_1'$ nor $X_2'$, then as before we have a set of $4$ compatible sidewalks of values $3$ or $4$, which leads to $C_{12}$. In the remaining case we get that $|X_1'|=|X_2'|=|X_i'|=\frac{n+2}{5}-1$ for some $i \neq 1,2$. It implies that every vertex in $X_{5}'$ and $X_{i-1}'$ is the source of a sidewalk of a non-zero value and every vertex in $X_{3}'$ and $X_{i+1}'$ is the sink of a sidewalk of a non-zero value. This leads to a set of $4$ compatible sidewalks of value $3$ or value $4$, and consequently to~$C_{12}$.
\end{proof}

\section{Semidegree threshold of $C_\ell$ with $k=4$ and $\ell \ge 9$}\label{sec:k4}

In this section we assume that $\ell \ge 9$ and $k=4$, which means that $\ell$ is divisible by $3$ and not divisible by $4$.
We start with proving \thmref{thm:k4walk}, which is a similar stability-type theorem as Theorem~\ref{thm:kbig_walk} proved in the previous section. 

\begin{proof}[Proof of \thmref{thm:k4walk}]
Let $H$ be an $n$-vertex graph with $\delta^\pm(H) \ge \frac{n}{4}-o(n)$ that does not contain a closed walk of length $\ell$. In particular, $H$ is $C_3$-free.
If $H$ does not contain transitive triangles, then the underlying undirected graph is triangle-free and has minimum degree $\frac{n}{2}-o(n)$, so Andr\'asfai--Erd\H{o}s--S\'os Theorem~\cite{AndES74} yields it is bipartite.
Therefore, it is enough to find $o(n)$ vertices in $H$ that covers all its transitive triangles.
Analogously to the previous section, we start with bounding the diameter of the graph~$H$. 

\begin{claim}\label{cla:diam6}
For any vertices $x, y \in V(H)$ there exists a directed path of length at least 2 and at most 6 from $x$ to $y$.
\end{claim}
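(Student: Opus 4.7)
The plan is to adapt the iterative neighborhood-expansion argument of \claref{cla:diameter} to the case $k=4$, replacing the appeal to \lemref{lem:girth} by the stronger Caccetta--H\"aggkvist-type bound from \thmref{thm:CH-C3}. First I would observe that $H$ is $C_3$-free: since $3 \mid \ell$, traversing a directed triangle $\ell/3$ times would produce a closed walk of length $\ell$, contradicting the hypothesis. Consequently the induced subgraph $H[S]$ is $C_3$-free for every $S \subseteq V(H)$.

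Fix $x \in V(H)$ and define inductively $X_1 := N^+(x)$ and $X_{i+1} := X_i \cup N^+(X_i)$. Because $H[X_i]$ is $C_3$-free, \thmref{thm:CH-C3} will provide some $x_i \in X_i$ with $|N^+(x_i) \cap X_i| < 0.3465\,|X_i|$. Combining this with $\delta^+(H) \geq n/4 - o(n)$ yields
\[
|X_{i+1}| \;\geq\; |X_i| + |N^+(x_i) \setminus X_i| \;\geq\; 0.6535\,|X_i| + \tfrac{n}{4} - o(n).
\]
Starting from $|X_1| \geq n/4 - o(n)$, iterating twice gives $|X_2| \geq 0.413\,n - o(n)$ and $|X_3| \geq 0.520\,n - o(n)$, so $|X_3| > n/2$ for all sufficiently large $n$. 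The symmetric procedure with in-neighborhoods, setting $Y_1 := N^-(y)$ and $Y_{i+1} := Y_i \cup N^-(Y_i)$, analogously yields $|Y_3| > n/2$.

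Thus $|X_3| + |Y_3| > n$ forces the intersection $X_3 \cap Y_3$ to be non-empty, and I would pick any $z \in X_3 \cap Y_3$. Crucially, $x \notin X_3$ and $y \notin Y_3$, since a closed walk of length $1$, $2$, or $3$ at a single vertex of an oriented $C_3$-free graph is impossible; hence $z \notin \{x, y\}$, and concatenating the walks $x \to \cdots \to z$ and $z \to \cdots \to y$ (each of length between $1$ and $3$) produces a walk from $x$ to $y$ of length between $2$ and $6$.

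The main obstacle is to upgrade this walk into a simple directed \emph{path} of length in $[2, 6]$. For $x \neq y$ with $xy \notin E(H)$, $C_3$-freeness together with the absence of digons rules out any short repetition in the walk, so the extracted simple path has length in $[2, 6]$. When $x = y$, one instead obtains a closed walk of length in $[4, 6]$ (the sub-lengths $2$ and $3$ being excluded as above), which must be a $C_4$, $C_5$, or $C_6$ through $x$ because any shorter closed subwalk would yield an impossible configuration. The remaining delicate case is $xy \in E(H)$, where shortcutting the walk could collapse it to the single edge $xy$; here I would exploit the linear-sized intersection $|X_3 \cap Y_3| \geq 0.04\,n$ to select $z$ so that the concatenated walk neither revisits $y$ before its final vertex nor $x$ after its first one, using that vertices of the walk forced to coincide with $x$ or $y$ would produce a forbidden short closed walk and thereby constrain only $o(n)$ choices of $z$.
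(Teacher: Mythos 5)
Your argument is essentially the paper's own: it too iterates out-neighborhoods three times using the $0.3465$ bound (via \lemref{lem:girth} with $m=1$, which is just \thmref{thm:CH-C3}) to reach more than $n/2$ vertices, obtaining the same constants $0.4133n$ and $0.52n$, then runs the symmetric in-neighborhood iteration from $y$ and intersects. The only divergence is your extended discussion of upgrading the concatenated walk to a simple path, which the paper passes over in silence --- and which is in any case dispensable, since every later use of this claim only needs a closed walk of length $\ell$, for which a walk of length between $2$ and $6$ from $x$ to $y$ suffices.
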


\begin{proof}
The proof is similar to the proof of \claref{cla:diameter}. Denote $X_1 = N^+(x)$ and iteratively $X_{i+1} = X_i \cup N^+(X_i)$ for $i\geq 1$.
For each $i \ge 1$ the induced graph $H[X_i]$ cannot contain a directed triangle, so from \lemref{lem:girth} there exists $x_i \in X_i$ such that \hbox{$|N^+(x_i) \cap X_i| < 0.3465|X_i|$}. 
Thus, 
$$|X_{i+1}| > |X_i| + |N^+(x_i)| - 0.3465 |X_i|.$$
In particular, since $|X_1| \geq 0.25n-o(n)$, then $|X_2| \geq 0.4133n-o(n)$ and $|X_3| \geq 0.52n-o(n)$.
It means that there is a directed path of length at most $3$ starting in $x$ to more than half of the vertices of the graph~$H$. 

Now, take the vertex $y \in V(H)$ and do analogously for a graph obtained from~$H$ by reversing the edges. This way we find a directed path of length at most $3$ to $y$ from more than half of the vertices of $H$. It means that there is a directed path from $x$ to $y$ of length at least $2$ and at most $6$. 
\end{proof}

Knowing this, we can control the structure of outneighborhood and inneighborhood of every vertex in $H$.

\begin{claim}
\label{cla:almost_acyclic}
For every vertex $x\in V(H)$, there exists a vertex $v$ in $N^+(x)$ satisfying $N^+(v)\cap N^+(x) = \emptyset$ and a vertex $w$ in $N^-(x)$ satisfying $N^-(w)\cap N^-(x) = \emptyset$.
\end{claim}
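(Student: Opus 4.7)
The plan is to prove only the outneighborhood statement, since reversing all edges of $H$ and applying the same argument yields the inneighborhood one. A key preliminary observation is that $H$ is $C_3$-free: if $H$ contained a directed triangle, traversing it $\ell/3$ times (possible because $3 \mid \ell$) would yield a closed walk of length $\ell$, contradicting the hypothesis.

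Suppose for contradiction that every $v \in N^+(x)$ has an outneighbor inside $N^+(x)$. Then $H[N^+(x)]$ has minimum outdegree at least one and hence contains a directed cycle $C = c_1 c_2 \cdots c_s c_1$; by the $C_3$-freeness of $H$ we have $s \ge 4$. Fix any index $j \in [s]$ and, by \claref{cla:diam6}, let $P$ be a path from $c_j$ to $x$ of length $r_j \in [2,6]$. For any integer $L \ge 1 + r_j$, I claim there is a closed walk of length $L$ through $x$: setting $t := L - 1 - r_j \ge 0$ and choosing $i_0 \in \{1, \ldots, s\}$ so that $i_0 + t \equiv j \pmod s$, the walk
\[
  x \to c_{i_0} \to c_{i_0+1} \to \cdots \to c_j \to P \to x,
\]
where the middle portion traverses $C$ for $t$ edges (possibly going around it several times), uses exactly $1 + t + r_j = L$ edges of $H$: the opening edge exists because $c_{i_0} \in N^+(x)$, the cycle edges exist because $C \subseteq H[N^+(x)]$, and the closing segment exists because $P$ is a path in $H$. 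Applying this with $L = \ell$, which satisfies $\ell \ge 9 > 7 \ge 1 + r_j$, produces a closed walk of length $\ell$ in $H$ and contradicts the hypothesis of \thmref{thm:k4walk}. The inneighborhood statement is proved analogously, using a source of $H[N^-(x)]$ in place of a sink of $H[N^+(x)]$.

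The main subtlety in the argument is the realization that once we have both (a) a cycle in $N^+(x)$ and (b) a short return path to $x$, the freedom to pick the entry point $i_0$ modulo $s$ combined with the ability to loop around $C$ arbitrarily many times gives us \emph{every} sufficiently large target length as a closed walk through $x$. The cutoff $1 + r_j \le 7$ is precisely small enough for $L = \ell \ge 9$ to fit, which is where the hypothesis $\ell \ge 9$ in \thmref{thm:k4walk} becomes essential.
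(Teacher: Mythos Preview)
Your proof is correct and follows the same approach as the paper's: both assume the contrary to obtain walks of arbitrary length inside $N^+(x)$, then close up via \claref{cla:diam6} to produce a closed walk of length~$\ell$. The only cosmetic difference is that you locate a cycle in $N^+(x)$ and vary the entry point and number of laps, whereas the paper takes a walk of length $4$ inside $N^+(x)$, extends it by a path of length $\ell-7$, and then invokes \claref{cla:diam6}; your version is slightly more streamlined since it avoids this intermediate extension (and your observation $s \ge 4$, while true, is not actually needed for the construction).
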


\begin{proof}
By symmetry, it is enough to prove it for $N^+(x)$, where $x$ is an arbitrary vertex of~$H$.

Assuming the contrary, we get that in $N^+(x)$ there must exists a directed walk of length $4$ from some vertex $a$ to some vertex $b$ (it may happen that $a$=$b$). Consider any directed path of length $\ell - 7$ starting in $b$ terminating in some vertex $c$. This way, we obtained walks from $x$ to $c$ of lengths $\ell-6$, $\ell-5$, $\ell-4$, $\ell-3$ and $\ell-2$.

By \claref{cla:diam6}, there exists an oriented path~$P$ from $c$ to $x$ of length at least 2 and at most~6. In each possible case of the length of $P$, we can obtain a closed walk of length~$\ell$. 
\end{proof}

With this, we can improve our bound in \claref{cla:diam6}. 

\begin{claim}\label{cla:diam5}
For any vertices $x, y \in V(H)$, there exists a directed path of length at least 2 and at most 5 from $x$ to $y$.
\end{claim}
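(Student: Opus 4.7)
My plan is to sharpen the BFS-type argument of \claref{cla:diam6} by a single unit, exploiting the acyclic structure around each vertex guaranteed by \claref{cla:almost_acyclic}. As before, set $X_1 := N^+(x)$ and $X_{i+1} := X_i \cup N^+(X_i)$, and symmetrically $Y_1 := N^-(y)$ and $Y_{i+1} := Y_i \cup N^-(Y_i)$; the plan is to prove that $|X_2| + |Y_3| > n$, so that $X_2 \cap Y_3$ is nonempty and any $z$ in the intersection yields a short walk from $x$ through $z$ to~$y$.

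The key new input is that \claref{cla:almost_acyclic} applied at $x$ produces a vertex $v \in N^+(x)$ with $N^+(v) \cap N^+(x) = \emptyset$, whence $N^+(v) \subseteq X_2 \setminus X_1$ and
\[|X_2| \;\geq\; |N^+(x)| + |N^+(v)| \;\geq\; 2\delta^+(H) \;\geq\; \tfrac{n}{2} - o(n).\]
The symmetric application at $y$ yields $|Y_2| \geq n/2 - o(n)$. I then iterate once more on the reverse side: because $\ell$ is divisible by $3$, the graph $H$ and hence $H[Y_2]$ is $C_3$-free, so \thmref{thm:CH-C3} applied to the reverse of $H[Y_2]$ produces $u \in Y_2$ with $|N^-(u) \cap Y_2| < 0.3465\,|Y_2|$. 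This gives
\[|Y_3| \;\geq\; |Y_2| + \delta^-(H) - 0.3465\,|Y_2| \;\geq\; 0.6535 \cdot \tfrac{n}{2} + \tfrac{n}{4} - o(n) \;>\; 0.57\,n,\]
so $|X_2| + |Y_3| > 1.07\,n - o(n) > n$ and in fact $|X_2 \cap Y_3| = \Omega(n)$. Any vertex $z$ in this intersection gives a directed walk from $x$ to $z$ of length at most~$2$ and one from $z$ to $y$ of length at most~$3$, and concatenating produces a walk of length at most~$5$ from $x$ to $y$ that can be reduced to a directed path.

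The only subtlety I foresee is the ``at least $2$'' clause: when the concatenated walk has length $4$ or $5$, the walk-to-path reduction could in principle collapse it down to length~$1$. However, since $|X_2 \cap Y_3|$ is linear in~$n$ while each such collapse forces a specific local configuration (a short shortcut or a triangle through $x$ or~$y$), one can select $z$ avoiding these configurations; the exclusions of $C_3$ and of closed walks of length $\ell$, together with the absence of $2$-cycles in oriented graphs, rule out all but trivially fixable situations. This is the only real technical point, and the remainder of the argument is a one-step refinement of \claref{cla:diam6}.
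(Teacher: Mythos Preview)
Your argument is essentially the same as the paper's: improve the diameter bound of \claref{cla:diam6} by one unit using \claref{cla:almost_acyclic} to get a $2$-ball of size at least $n/2 - o(n)$ on one side, combine with a $3$-ball exceeding $n/2$ on the other side, and intersect. The only organisational difference is that the paper reuses the bound $|X_3| > 0.52n$ already computed in the proof of \claref{cla:diam6} and applies \claref{cla:almost_acyclic} on the $y$-side to obtain $|Y_2| \ge n/2 - o(n)$, whereas you apply \claref{cla:almost_acyclic} on both sides and then perform one further $C_3$-free expansion to reach $|Y_3| > 0.57n$; either pairing of a $2$-ball and a $3$-ball works. Your explicit discussion of the ``at least~$2$'' clause goes beyond what the paper writes (the paper simply asserts the conclusion), and your case analysis is on the right track---note that for walks of length at most~$4$ every potential collapse forces a $2$-cycle or a $C_3$, so only the length-$5$ case with a $C_4$ through $x$ or $y$ needs the avoidance argument you sketch.
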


\begin{proof}
From the proof of \claref{cla:diam6}, we know that there exists a directed path of length at most $3$ to at least $0.52n-o(n)$ vertices of the graph $H$.
From \claref{cla:almost_acyclic}, there exists $w \in N^-(y)$ without inneighbors in $N^-(y)$, so there exists a directed path of length at most $2$ to $y$ from at least $0.5n-o(n)$ vertices of $H$.
This means that there is a directed path from $x$ to $y$ of length at least $2$ and at most $5$. 
\end{proof}

\begin{claim}\label{cla:1234}
For any $1\le a \le \ell-5$ there are no vertices $x$ and $y$ in $H$ for which there exist directed paths of lengths $a$, $a+1$, $a+2$, and $a+3$ from $x$ to $y$. In particular, for every vertex $v\in V(H)$, there is no path of length~$3$ in $N^+(v)$ or in $N^-(v)$.
\end{claim}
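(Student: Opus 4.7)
The plan is to prove the main statement by reverse induction on $a$, running from $a = \ell - 5$ down to $a = 1$, and to derive the ``in particular'' clause as the specialisation $a = 1$: a directed path $u_0 \to u_1 \to u_2 \to u_3$ in $N^+(v)$ produces directed paths $v \to u_3$, $v \to u_2 \to u_3$, $v \to u_1 \to u_2 \to u_3$ and $v \to u_0 \to u_1 \to u_2 \to u_3$ of respective lengths $1, 2, 3, 4$ from $v$ to $u_3$ (the case of $N^-(v)$ being symmetric), and the hypothesis $\ell \ge 9$ ensures $1 \le \ell - 5$ so that $a = 1$ is admissible.

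For the base case $a = \ell - 5$, given paths $P_0, P_1, P_2, P_3$ of lengths $\ell-5, \ell-4, \ell-3, \ell-2$ from $x$ to $y$, I would invoke \claref{cla:diam5} to obtain a directed path $Q$ of some length $p \in \{2, 3, 4, 5\}$ from $y$ to $x$, and concatenate each $P_i$ with $Q$ to produce a closed walk through $x$ of length $\ell - 5 + p + i$. As $i$ ranges over $\{0, 1, 2, 3\}$, these four lengths form a window of four consecutive integers containing $\ell$ for every admissible $p$; hence some concatenation yields a closed walk of length exactly $\ell$, contradicting the hypothesis that $H$ has no such closed walk.

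For the inductive step (assuming the statement for $a + 1$ with $1 \le a \le \ell - 6$), I would suppose for contradiction that paths $P_0, \ldots, P_3$ of lengths $a, a+1, a+2, a+3$ from $x$ to $y$ exist, and pick a vertex $u \in N^-(x)$ avoiding $V(P_0) \cup V(P_1) \cup V(P_2) \cup V(P_3)$. Such $u$ exists because $|N^-(x)| \ge n/4 - o(n)$ grows with $n$ while $|\bigcup_i V(P_i)| \le 4(\ell-1)$ depends only on $\ell$. Prepending the edge $ux$ to each $P_i$ then gives directed paths of the four consecutive lengths $a+1, a+2, a+3, a+4$ from $u$ to $y$, contradicting the inductive hypothesis at $a' = a + 1 \le \ell - 5$. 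The main obstacle is really only the base-case arithmetic, which works precisely because both the range $\{2,3,4,5\}$ provided by \claref{cla:diam5} and the range $\{0,1,2,3\}$ of forward-length offsets are intervals of length four, so their sum always covers $\ell$.
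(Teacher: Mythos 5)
Your proof is correct and takes essentially the same route as the paper: reduce to paths/walks of lengths $\ell-5,\dots,\ell-2$ between two vertices and close them up via \claref{cla:diam5}, using that $\{2,3,4,5\}+\{0,1,2,3\}$ always contains $5$. The only (cosmetic) difference is that the paper performs the reduction in one step by appending a forward path of length $\ell-5-a$ at $y$ (producing walks), whereas you prepend edges at $x$ one at a time by reverse induction to keep everything a path.
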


\begin{proof}
Assume that such vertices exist and consider any directed path of length $\ell - 5 - a$ starting in $y$ going to some vertex $z$ (if $\ell - 5 - a=0$ then $z=y$). 
Such a path exists since $\delta^\pm(H) \geq \frac{n}{4} - o(n)$.
This way, we obtained walks from $x$ to $z$ of lengths $\ell-5$, $\ell-4$, $\ell-3$, and $\ell-2$. From \claref{cla:diam5} there is a path from $z$ to $x$ of length between 2 and 5. In each case, we can find a closed walk of length~$\ell$. 
\end{proof}

Using the proved claims, we can finish the proof of the theorem. 

As noted at the beginning of the proof, $H$ contains a transitive triangle. Denote its source by $u$ and sink by~$v$. Assume there is a directed path $P$ of length at most 4 from $v$ to $u$. 
This path cannot be of length $2$, because there are no directed triangles in $H$. If $P$ is of length $3$, then it creates overlapping cycles of lengths $4$ and $5$, and in consequence a closed walk of length $\ell$ in $H$, because any $\ell \geq 9$ divisible by $3$ can be obtained by adding multiples of $4$ and $5$. Similarly, if $P$ is of length $4$, then it creates overlapping cycles of lengths $5$ and $6$, and so a closed walk of length $\ell$ for all considered here $\ell$ except $\ell=9$. In this missing case, we can use \claref{cla:diam5} for two edges on the constructed $C_5$ (as presented in \figref{fig:l9no4path}) to obtain a contradiction with \claref{cla:1234} (if both paths are of lengths $2$) or a closed walk of length 9 (in all the other possible lengths of the paths).
Therefore, there is no path of length smaller than~$5$ from $v$ to $u$.

\begin{figure}[ht]
{\hfill
\includegraphics{figures/figure-7}
\hfill}
\caption{Setting used in the proof that there is no path of length smaller than $5$ from the sink of a transitive triangle to its source.}
\label{fig:l9no4path}
\end{figure}

Let $A:=N^+_H(v)$ and $D:=N^-_H(u)$.
Since there are no directed triangles in $H$, if every vertex in $A$ has an outneighbor in $A$, then there exists a path on $4$ vertices in~$A$. This gives a contradiction with \claref{cla:1234}, thus there exists a vertex $a\in A$ without outneighbors in~$A$. 
Similarly, there is a vertex $d\in D$ without inneighbors in $D$. Let $B:=N^+_H(a)$ and $C:=N^-_H(d)$. 
Since there is no path of length at most $4$ from $v$ to $u$, the sets $A$, $B$, $C$, and $D$ are disjoint. Each is of size at least $\delta^\pm(H) = \frac{n}{4}-o(n)$, and so by removing the remaining vertices from~$H$ we keep the semidegree assumption of $H$. 
Notice that an edge from $B$ to $A$ (analogously from $D$ to $C$) is giving a contradiction with \claref{cla:1234} for $x=u$ and $y$ being the sink of the assumed edge, and so there are no such edges. Let $b\in B$ be a vertex without outneighbors in $B$. Since $b$ do not have outneighbors in $A$, $B$ and~$D$ (as there is no path of length $4$ from $v$ to $u$), it is connected with $|C|-o(n)$ vertices in $C$. Similarly, there is a vertex $c \in C$ having $|B|-o(n)$ inneighbors in $B$. Thus, we can remove those $o(n)$ vertices from $H$ and keep the semidegree assumption of~$H$. The obtained setting is depicted in \figref{fig:k4walksetup}. 

\begin{figure}[ht]
{\hfill
\includegraphics{figures/figure-8}
\hfill}
\caption{Partitioning of the graph vertices into the sets $A$, $B$, $C$, and $D$.}
\label{fig:k4walksetup}
\end{figure}

Now notice that if at least two of the sets $A$, $B$, $C$, and $D$ contain some edges, then we obtain a contradiction with \claref{cla:1234} for $\ell \geq 15$ and there is $C_9$ forbidden in the case $\ell=9$. Thus, at least three of the sets $A$, $B$, $C$, and $D$ contain no edges. If~$B$ (or by symmetry $C$) contains no edges, all the vertices in $B$ have $\frac{n}{4} - o(n)$ outneighbors in $C$, thus all but $o(n)$ vertices in $C$ have $\frac{n}{4} - o(n)$ inneighbors in $B$. Remove those $o(n)$ vertices from $H$. If $A$ (or by symmetry $D$) contains no edges, then each vertex in $A$ has $\frac{n}{4} - o(n)$ outneighbors in $B$, and so by removing $o(n)$ vertices from $H$ we can obtain that all vertices in $B$ have $\frac{n}{4} - o(n)$ inneighbors in $A$. There cannot be any edges from $C$ to~$A$, as otherwise we can find a forbidden directed triangle in $H$, which implies that each vertex in $A$ has at least $\frac{n}{4} - o(n)$ inneighbors in $D$. Thus, by removing $o(n)$ vertices from~$H$, we can obtain that all vertices in $D$ have $\frac{n}{4} - o(n)$ outneighbors in $A$. Finally, since $C$ or $D$ contains no edges, we can remove $o(n)$ vertices from $H$ to obtain that all vertices in $C$ have $\frac{n}{4} - o(n)$ outneighbors in~$D$ and all vertices in~$D$ have $\frac{n}{4} - o(n)$ inneighbors in $C$. This implies that there are no edges from $D$ to $B$ and inside any of the sets $A$, $B$, $C$ and~$D$. Hence, by removing the starting transitive triangle, we obtain the sought bipartite graph. 
\end{proof}

\thmref{thm:kbig_walk} was crucial in our proof of \thmref{thm:main} for $k \ge 5$, and analogously we are going to use \thmref{thm:k4walk} for proving \thmref{thm:main} in this section, that is, for cycle lengths $\ell \ge 9$ that are divisible by $3$ but not divisible by $4$.
Fix such a cycle length~$\ell$, and suppose there exists an $n$-vertex graph $G$ contradicting \thmref{thm:main} for an arbitrary large $n$.
As in the previous section, a combination of \propref{prop:removallemma} and \thmref{thm:k4walk} yields that we can remove $o(n)$ vertices and $o(n^2)$ edges from $G$ in order to find a bipartite $H \subseteq G$ with $\delta^\pm(H) \geq \frac{n}4-o(n)$.
Now we show that there are $o(n)$ vertices and $o(n^2)$ edges in $H$ such that their removal yields a subgraph of a blow-up of $C_4$ with the minimum semidegree at least $\frac{n}{4}-o(n)$.

\begin{claim}
$G$ contains a subgraph $H'$ homomorphic to $C_4$ with $\delta^\pm(H') \geq \frac{n}{4} - o(n)$.
\end{claim}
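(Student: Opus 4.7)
The plan is to extract the $C_4$-homomorphism from the bipartite structure of $H$ by defining a $4$-partition from the neighborhoods of two reference vertices, then arguing that only $o(n)$ vertices violate the cyclic structure. Since $H$ is bipartite with sides $L,R$ and $\delta^\pm(H)\ge n/4-o(n)$, both sides have size $n/2\pm o(n)$, and for each vertex its in- and out-neighborhoods on the opposite side are disjoint and together cover all but $o(n)$ of that side.

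Fix an arbitrary $v_0\in L$ and set $X_2:=N^+_H(v_0)$, $X_4:=N^-_H(v_0)$; each has size $n/4\pm o(n)$, and together they cover $R$ up to $o(n)$ vertices. Similarly fix $w_0\in X_2$ and set $X_1:=N^-_H(w_0)$, $X_3:=N^+_H(w_0)$, covering $L$ up to $o(n)$ vertices; note $v_0\in X_1$. The candidate $4$-partition is $(X_1,X_2,X_3,X_4)$ with the cyclic order $X_1\to X_2\to X_3\to X_4\to X_1$. The key step is to show that only $o(n)$ vertices are incident to edges violating this order. For instance, if $v\in X_1$ has an outneighbor $w\in X_4$, then the four edges $v\to w$, $w\to v_0$, $v_0\to w_0$, $v\to w_0$, all forced by the construction, form a $3$-shortcut with source $v$ and sink $w_0$: two $v$-to-$w_0$ directed walks of lengths $1$ and $3$. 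A diameter-type bound analogous to \claref{cla:diam5} yields a short directed walk from $w_0$ back to $v$; combining it with the two short paths of the shortcut via a Frobenius-style length-combination argument, together with the path-length restrictions from \claref{cla:1234}, one produces a copy of $C_\ell$ in $G$, contradicting its $C_\ell$-freeness. Symmetric arguments eliminate the other three types of bad edges, namely $X_2\to X_1$, $X_3\to X_2$, and $X_4\to X_3$.

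With only $o(n)$ bad vertices, we remove them together with the auxiliary set $T$ of vertices not placed in any $X_i$, and define $H'$ as the subgraph of $G$ on the remaining vertex set keeping only edges that respect the cyclic $C_4$-order; by construction $H'$ is bipartite (hence homomorphic to $C_4$) with the inherited $4$-partition. Since each vertex has at most $o(n)$ edges outside this order, the minimum semidegree is preserved up to $o(n)$, giving $\delta^\pm(H')\ge n/4-o(n)$ as required. The main obstacle is the shortcut-to-cycle step: turning a $3$-shortcut together with short connecting paths into an actual $C_\ell$ (and not merely a closed walk of length $\ell$, which would not suffice for odd $\ell$ since $H$ being bipartite already excludes closed walks of odd length) requires care, especially for the smallest cases $\ell=9$ and $\ell=18$, and probably proceeds by iterating \claref{cla:diam5} and \claref{cla:1234} to assemble distinct-vertex paths of the requisite length.
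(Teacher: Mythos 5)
Your overall architecture differs from the paper's: you define the four parts from the in/out-neighborhoods of two reference vertices and try to kill every order-violating edge individually, whereas the paper chooses a single directed path of length $\ell-3$ in $H$ and lets $A:=N^+_H(v_R)$, $D:=N^-_H(v_L)$ be the neighborhoods of its endpoints, so that any $G$-edge from $A$ to $D$ closes a genuine copy of $C_\ell$ on the spot; the remaining two parts are just $L\setminus A$ and $R\setminus D$, and a short edge-counting argument shows the four parts carry an almost-complete $C_4$-blow-up. The difference matters, because your mechanism for excluding a bad edge has a genuine gap exactly where the problem is hardest. A bad edge gives you a $3$-shortcut, i.e.\ two $v\to w_0$ paths of lengths $1$ and $3$; closing them with a return path of length $r$ from \claref{cla:diam5} yields closed walks of lengths $r+1$ and $r+3$. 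Since $H$ is bipartite and the source and sink of your shortcut lie on opposite sides, $r$ is odd, so \emph{every} closed walk you can manufacture this way has even length, and so does every Frobenius-style combination of them. When $\ell$ is odd (which covers $\ell=9,15,21,\dots$, including the smallest case), this yields no contradiction whatsoever: $H$ being bipartite already has no closed $\ell$-walks, and you never leave $H$. You acknowledge this in your last sentence but offer no repair; "iterating \claref{cla:diam5} and \claref{cla:1234}" does not supply one, since \claref{cla:1234} needs paths of four consecutive lengths between the same pair of vertices, while a $3$-shortcut provides only two lengths differing by $2$.

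The missing idea is the parity fix: for odd $\ell$ one must route the construction through something outside the bipartite structure --- either a vertex of $V(G)\setminus V(H)$ or a $G$-edge inside one side of the bipartition, one of which is forced to exist by counting --- and this is precisely what the paper does by threading a path of length $\ell-5$ through such a vertex $v$ before taking the endpoint neighborhoods. A second, smaller defect is that even where your length-combination argument applies, concatenating closed walks at a common vertex produces closed walks, not cycles; this happens to suffice for even $\ell$ (because $H$ was constructed to contain no closed $\ell$-walk), but it is not the "actual $C_\ell$ in $G$" your write-up promises, and for odd $\ell$ nothing short of an actual $C_\ell$ in $G$ will do. As it stands the proposal proves the claim at best for $\ell\equiv 6 \bmod 12$, $\ell \geq 18$.
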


\begin{proof}
Note that it is enough to find such a subgraph $H'$ inside $H$.
Let $(L,R)$ be any bipartition of $H$.
Firstly, we show that there exist disjoint sets $A \subseteq L$ and $D \subseteq R$ of at least $\frac{n}{4}-o(n)$ vertices, without edges in $G$ from~$A$ to $D$.

If $\ell$ is even, then take an arbitrary path $P$ in $H$ of length $\ell-3$ and let $v_L \in L$ and $v_R \in R$ be its source and sink. We set $A:=N^+_H(v_R) \setminus P$ and $D:=N^-_H(v_L)\setminus P$. Clearly, both $A$ and $D$ have sizes at least $\frac{n}{4}-o(n)$, and no edge in $G$ goes from $A$ to $D$ since such an edge would create a~copy of~$C_\ell$.

If $\ell$ is odd and there exists $v \in V(G)\setminus V(H)$, then without loss of generality, $v$ has an inneighbor in one part (say $L$), and an outneighbor in the other one. In the case $V(G) = V(H)$, there must be an edge in~$G$ inside the larger part, which also yields an existence of a vertex $v \in V(G)$ that has inneighbor in $L$ and outneighbor in $R$.
Let $v_L$ be an inneighbor of $v$ in $L$, and $P$ be an arbitrary path in $H$ of length $\ell-5$ with source in some outneighbor of $v$ in $R$ and sink in some $v_R \in R$, that is not going through $v_L$.
Now we set $A:=N^+_H(v_R) \setminus (P \cup \{v_L,v\})$ and $D:=N^-_H(v_L)\setminus (P \cup \{v_R,v\})$.
As before, both $A$ and $D$ have sizes at least $\frac{n}{4}-o(n)$, and no edge in $G$ goes from $A$ to $D$ since such an edge would create a~copy of~$C_\ell$.

Now, let $C:=L \setminus A$ and $B:=R \setminus D$.
Clearly $N_H^+(A) \subseteq B$ and $N_H^-(D) \subseteq C$ and both $B$ and $C$ have sizes at most $\frac{n}4+o(n)$.
Hence, the lower bound on $\delta^\pm(H)$ yields that only $o(n^2)$ edges can go from $B$ to $A$ so there are at least $\frac{n^2}{16} - o(n^2)$ edges of $H$ that go from $B$ to $C$.
Analogously, at least $\frac{n^2}{16} - o(n^2)$ edges of $H$ go from~$C$ to $D$.
Therefore, we have found a subgraph of $G$ with $\frac{n^2}{4} - o(n^2)$ edges that is homomorphic to $C_4$.
Since only $o(n)$ vertices of any such subgraph can have the indegree or the outdegree below $\frac{n}{4} - o(n)$, removing all the low-degree vertices from it yields the sought subgraph $H' \subseteq G$.
\end{proof}

The proof of \thmref{thm:main} for $k=4$ and $\ell \geq 9$ now readily follows from the following:

\begin{lemma}\label{lem:c4blowup}
Fix an integer $\ell\ge 9$ divisible by $3$ but not divisible by $4$, and let $G$ be an $n$-vertex graph with the minimum semidegree
\[\delta^\pm(G) \ge
\begin{cases}
(n+1)/4 & \textrm{when } \ell \equiv 3 \textrm{ mod }4, \\
(n+2)/4 & \textrm{otherwise.}
\end{cases}
\]
If $G$ contains a subgraph $H$ homomorphic to $C_4$ with $\delta^\pm(H) \geq \frac{n}{4}-o(n)$, then $G$ contains~$C_\ell$.
\end{lemma}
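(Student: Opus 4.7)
The plan is to argue by contradiction: assume $G$ is $C_\ell$-free. Let $X_1,X_2,X_3,X_4$ be the cyclically ordered blobs of $H$ (so that every edge of $H$ is of the form $X_i\to X_{i+1}$) and let $T:=V(G)\setminus V(H)$. Since $\delta^\pm(H)\ge n/4-o(n)$, we have $|T|=o(n)$ and each $|X_i|=n/4+o(n)$. Call an edge of $G$ not in $H$ an \emph{extra}; extras split by type into \emph{intra-blob} edges, \emph{backward} edges $X_{i+1}\to X_i$, \emph{skip} edges between $X_i$ and $X_{i+2}$, and edges incident to $T$. As in \secref{sec:kbig}, edges through $T$ naturally combine into \emph{sidewalks} $X_i\to t\to X_j$ of \emph{value} $v:=j-i\bmod 4$ (for simplicity one may assume $T$ is independent, the general case being analogous).

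Assign \emph{winding} $+1$ to forward blob edges, $-1$ to backward edges, $+2$ to skip edges, and $v$ to a sidewalk of value $v$; for any closed walk in $G$ the total winding is $\equiv 0\bmod 4$. A short calculation then shows that a cycle using $e_0$ intra-blob, $b$ backward, $s$ skip edges and $m$ sidewalks of values $v_1,\dots,v_m$ has length
\[\ell\equiv e_0+2b-s+\sum_{k=1}^m(2-v_k)\pmod 4.\]
Hence the \emph{deviation contribution} is $+1$ per intra-blob edge, $+2$ per backward edge, $-1$ per skip edge, and $(2-v)\bmod 4$ per value-$v$ sidewalk. In particular, a single skip edge or a value-$3$ sidewalk handles $\ell\equiv 3\bmod 4$; a single intra-blob edge or a value-$1$ sidewalk handles $\ell\equiv 1\bmod 4$; and a single backward edge or a value-$0$ sidewalk handles $\ell\equiv 2\bmod 4$. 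Whenever a compatible (vertex-disjoint) collection of extras with deviation $\equiv\ell\bmod 4$ is found, $C_\ell$ is assembled by inserting long forward blob-paths of appropriate lengths between the extras; this is possible because $|X_i|\ge n/4-o(n)\gg\ell$.

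It remains to force the required extras to exist. Since $\sum|X_i|\le n$, some blob $X_{i+1}$ satisfies $|X_{i+1}|\le n/4$, and every $v\in X_i$ has outdegree at least $(n+1)/4$ but at most $|X_{i+1}|\le n/4$ forward outneighbours, so it has at least one extra outneighbour; the in-neighbour version holds symmetrically, yielding $\Theta(n)$ extras overall. These extras distribute among intra-blob, backward, skip and $T$-edges. If any skip edge is present and $\ell\equiv 3\bmod 4$, we are done. Otherwise, associate to each $t\in T$ the sets $I_t,O_t\subseteq\Z_4$ of blobs with many in- resp.\ out-neighbours of $t$ (so the sidewalk values through $t$ are $O_t-I_t$), and use an additive argument in $\Z_4$ similar to \lemref{lem:additive} together with a case analysis on the patterns $(I_t,O_t)$ and on the locations of intra-blob and backward edges to produce a compatible extra configuration with deviation $\equiv\ell\bmod 4$. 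The main obstacle is the tightness of the case $\ell\equiv 3\bmod 4$: the semidegree exceeds $n/4$ by only $1/4$, so the counting must precisely track blob-size imbalances and rule out the possibility that $G$ essentially mimics an extremal construction of \secref{sec:constructions}; the two cases $\ell\equiv 1,2\bmod 4$ benefit from an extra quarter of slack per vertex and admit an analogous but more forgiving analysis.
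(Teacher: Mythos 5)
Your setup is sound and coincides with the paper's: partition into four blobs plus a small set $T$, classify the non-$H$ edges by the change of winding they induce modulo $4$, and note that the semidegree excess over $n/4$ forces such edges to exist. Your deviation formula checks out, and your table of which single extra edge or sidewalk suffices for each residue of $\ell$ is correct. The problem is that everything after that table is deferred to ``an additive argument \ldots together with a case analysis,'' and that deferred step is where essentially the entire difficulty of the lemma lives. The binding constraint is not the \emph{number} of extras (which is indeed $\Theta(n)$) but their \emph{compatibility}: all of the extra out-edges of a blob can funnel into a single vertex of $T$, producing only sidewalks of one fixed value that pairwise share that vertex, so that at most one of them can be used in any cycle. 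A single sidewalk of value $1$ (in your convention) handles $\ell\equiv 1\bmod 4$ but is useless for $\ell\equiv 2$ or $3\bmod 4$, and nothing in your counting rules out this concentration. The paper's proof spends almost all of its effort precisely here: a preliminary cleaning step (three disjoint intra-blob or diagonal edges already yield $C_\ell$, so $O(1)$ vertices cover them all), the blob-transversal inequality $e(S,T)+2(n-|T|)\ge 8\delta^\pm(G)$, the bound of \claref{cla:k4:atmost3} on how a $T$-vertex can meet a transversal, the elimination of value-$2$ sidewalks and backward edges in \claref{cla:k4:noSW2}, the absorption of value-$0$-sidewalk vertices into the blobs, and finally the residue-specific counting of Claims \ref*{cla:k4:cong2}, \ref*{cla:k4:cong3} and \ref*{cla:k4:cong1}, where the exact constants $(n+1)/4$ versus $(n+2)/4$ and the blob-size imbalances are what close the argument. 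None of this is reconstructed in your plan, and it is not routine: you yourself flag that the $\ell\equiv 3\bmod 4$ case is tight, but the $\ell\equiv 1\bmod 4$ case is equally delicate (one must show the smallest blob has size at least $(n-2)/4$ and then extract a second small blob to get a second compatible value-$3$ sidewalk).

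A smaller point: the assertion that any compatible family with the right total deviation can be assembled into $C_\ell$ ``because $|X_i|\gg\ell$'' needs care for small $\ell$. For $\ell=9$ there is room for very few extras and connecting segments, and the paper explicitly checks these small cases (e.g.\ a single intra-blob edge already forces $C_9$). This is fixable, but together with the missing case analysis it means the proposal is a correct outline of the paper's strategy rather than a proof.
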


\begin{proof}
Suppose for contradiction that $G$ is $C_\ell$-free.
Since $H$ is a subgraph of a blow-up of~$C_4$, we refer to its parts as blobs.
Let us now look what are the edges in $G$ that the vertices of a blob might induce.
Clearly, no blob can contain a path of length $3$.
Also, if there are $3$ disjoint edges in $G$ inside the blobs of $H$, then we find~$C_\ell$ for any considered here~$\ell$.
Indeed, when $\ell=9$, any edge inside a blob creates $C_9$, and for $\ell \ge 15$ we simply walk around the four blobs three-times and pick one edge inside a blob at a time.
Therefore, we can remove at most~3 vertices from $H$ to get rid of all edges inside blobs.
Similarly, any 3 disjoint edges in $G$ between a given pair of the non-consecutive blobs of $H$ creates $C_\ell$, hence removing at most 4 vertices in each blob destroys all such edges.

We refer to the parts of $H$ in their cyclic order as $A$, $B$, $C$, and $D$, and call them blobs.
By the discussion in the previous paragraph, we may assume that $V(H)$ induces a bipartite graph in $G$.
Note that the only edges induced in $G$ by $V(H)$ that are not in $E(H)$ go between consecutive blobs in the reverse order. Moreover, since $\delta^\pm(H) \geq \frac{n}{4}-o(n)$, there are only $o(n^2)$ such edges.

Among all the possible choices of such $H \subseteq G$ we select a one maximizing $|V(H)|$, and let $T:=V(G)\setminus V(H)$.
Recall a sidewalk is a path of length $2$ starting and finishing in $V(H)$ with the middle vertex in $T$,
and the value of a sidewalk is the change in the remainder modulo $4$ of cycle lengths that can be achieved by using this sidewalk. 
Also recall that two sidewalks are compatible if they are either vertex disjoint or they share exactly one vertex that is then the sink of one of the sidewalks and the source of the other one.	
As in the proof of \thmref{thm:main} for $k\ge5$, our aim is to find a compatible set of sidewalks that can be combined to a copy of $C_\ell$. 

We call a \emph{blob-transversal} any set of $4$ vertices of $G$ that contains exactly one vertex from each blob.
For a blob-transversal $S$, let $e(S,T)$ be the number of edges between $S$ and $T$ (regardless of their direction).
Since each vertex of $H$ has neighbors only in the neighboring blobs or in $T$, summing up the sizes of the neighborhoods of a fixed blob-transversal $S$ yields the following estimate:
\begin{equation}\label{eq:sidewalks_bound}
e(S,T) + 2(n-|T|) \geq 8 \cdot \delta^\pm(G).
\end{equation}

If a vertex in $T$ creates a sidewalk of value $\ell$ mod $4$, then we easily find $C_\ell$.
Therefore, there are no sidewalks of such a value.
Let us now restrict adjacencies between the vertices in $T$ and blob-transversals.

\begin{claim}\label{cla:k4:atmost3}
For every $t\in T$ and blob-transversal $S$ there are at most $3$ neighbors of $t$ inside $S$.
\end{claim}

\begin{proof}
Suppose for contrary that there is such a vertex $t \in T$, a blob-transversal~$S$, and $4$ edges between them.
If $t$ has $4$ outneighbors (or inneighbors) in $S$, then pick an inneighbor (or outneighbor) of $t$ outside $S \cup T$.
Such a neighbor together with $t$ and $S$ creates a sidewalk of any value modulo $4$ contradicting that there is no sidewalk of value $\ell$ mod $4$.

Therefore, not all the edges between $t$ and $S$ have the same direction and $S\cup\{t\}$ contains sidewalks of both values $1$ and $3$. 
This readily yields a contradiction unless $\ell \equiv 2$~mod~$4$.
In this case, our aim is to find another sidewalk of a non-zero value that is disjoint with $S \cup \{t\}$.

Fix a blob-transversal $S'$ that is disjoint from $S$, and let $t' \neq t$ be a vertex in $T$ that has at least three neighbors inside $S'$.
If $S' \cup \{t'\}$ contains no sidewalk of a non-zero value, then all the edges between $t'$ and $S'$ must have the same orientation.
However, the vertex~$t'$ must be adjacent to $n/4 - o(n)$ vertices of $H$ using edges with the other orientation, which yields a sidewalk of a non-zero value disjoint from $S \cup \{t\}$.
Since its value cannot be $2$, it must be either $1$ or $3$. In both cases we combine the new sidewalk with the one of the same value in $S\cup\{t\}$ to find a copy of $C_\ell$.
\end{proof}

This claim is already sufficient for ruling out the possibility that $\ell$ is even.

\begin{claim}\label{cla:k4:cong2}
If $\ell \ge 18$ and $\ell \equiv 2$ mod $4$, then $G$ contains $C_\ell$.
\end{claim}
\begin{proof}
By \eqref{eq:sidewalks_bound}, we have $e(S,T) \ge 2|T|+4$ for every blob-transversal $S$.
Together with \claref{cla:k4:atmost3}, it yields that for every blob-transversal $S$ there are at least $4$ vertices in $T$ with exactly $3$ neighbors in $S$.
In particular, every blob-transversal is in $4$ different sidewalks of a non-zero value.
Since these sidewalk values can be only $1$ or $3$, we find two disjoint sidewalks of the same value (simply consider three vertex disjoint blob-transversals).
As $\ell\ge18$, any two disjoint sidewalks of the same value can be combined and create $C_\ell$. 
\end{proof}

For the rest of the proof, we assume $\ell$ is an odd multiple of $3$. 

\begin{claim}\label{cla:k4:noSW2}
There are no sidewalks of value $2$ and no edges between consecutive blobs that do not agree with the cyclic order of the blobs.
\end{claim}

\begin{proof}
Assuming the contrary, there is a directed path $uvw$ with $u$ and $w$ from the same blob. 
Consider a blob-transversal $S$ using an inneighbor of $u$ and an outneighbor of $w$.
From \eqref{eq:sidewalks_bound} and \claref{cla:k4:atmost3} there is a vertex $t \neq v$ in $T$ connected with $3$ vertices from $S$. 
If it creates a sidewalk of a non-zero value, then its value is $1$ or~$3$, and it can be combined with the path $uvw$ to find $C_\ell$ for every $\ell \geq 9$ considered here.
In the other case, all $3$~edges are going from (or to) $t$. But then, with any inneighbor (or outneighbor) of $t$, we have a sidewalk of value~$1$ or $3$. Combining it with the assumed path $uvw$ we find $C_\ell$.
\end{proof}


Let us now partition the vertices in the set $T$ to sets $T_A$, $T_B$, $T_C$, $T_D$, and $T'$ based on adjacencies to the blobs they have.
Recall that any vertex $t \in T$ cannot simultaneously have an inneighbor and an outneighbor inside the same blob by~\claref{cla:k4:noSW2}, and \claref{cla:k4:atmost3} yields that $t$ cannot create sidewalks of value $0$ with two different pairs of blobs.
If $t \in T$ has an inneighbor in $A$ and an outneighbor in $C$, then we put $t$ to the set~$T_B$.
Symmetrically, having an inneighbor and an outneighbor in $B$ and $D$, $C$ and $A$, or $D$ and $B$ results in placing $t \in T$ inside $T_C$, $T_D$ or $T_A$, respectively.
For brevity, we set $A' := A \cup T_A$, $B' := B \cup T_B$, $C' := C \cup T_C$, and $D' := D \cup T_D$, and refer to these sets as~\emph{extended blobs}.
Finally, let $T'$ be the set of vertices in $T$ not creating any sidewalk of value $0$, that is, $T' := T\setminus \left(T_A \cup T_B \cup T_C \cup T_D\right)$.

\begin{claim}\label{cla:k4:cong3}
If $\ell \equiv 3$ mod $4$, then $G$ contains $C_\ell$.
\end{claim}

\begin{proof}
Recall that in this case $\ell \geq 15$, $\delta^\pm(G) \geq \frac{n+1}{4}$ and there are no sidewalks of value 3. 

Suppose first there is an edge between a diagonal pair of extended blobs, say from $b \in B'$ to $d \in D'$.
Since any path of length~$3$ from a vertex in $N^-(b) \cap A$ to a vertex in $N^+(d) \cap A$ can be extended to an $\ell$-cycle, we may assume that $b \in T_B$, $d \in T_D$, $N^+(b) \cap D' \subseteq T_D$, $N^-(d) \cap B' \subseteq T_B$, and crucially there is exactly one vertex $v \in A \cap \left(N^-(b) \cup  N^+(d)\right)$.
However, in this case $b$ must have at least $n/4-o(n)$ inneighbors in $B$ since any inneighbor of $b$ in $C$ or $D$ results in a forbidden sidewalk of value $2$ or $3$, respectively.
Similarly, any outneighbor of $b$ in $A$ contradicts \claref{cla:k4:noSW2}, thus all but $o(n)$ outneighbors of $b$ lie in $C$.
A symmetric argument yields that $d$~has $n/4-o(n)$ outneighbors in $D$ and $n/4-o(n)$ inneighbors in $C$.
Therefore, $G$ contains a path $P$ of length~$4$ from a vertex in $B$ to a vertex in $D$.
Now consider any blob-transversal $S$ disjoint from $P \cup \{v\}$.
Since both $b$ and $d$ can have at most $2$ neighbors in $S$, the estimate \eqref{eq:sidewalks_bound} yields, as in the proof of \claref{cla:k4:noSW2}, existence of a sidewalk of value $1$ disjoint from $P$. Combining these two results in an $\ell$-cycle. Therefore, no edge of $G$ goes between a diagonal pair of the extended blobs.

Now, let us focus on a possible existence of an edge going between two consecutive extended blobs in the reverse direction.
By symmetry, suppose there is an edge from $b\in B'$ to $a \in A'$. By definition of the set $A'$, $a$ has an outneighbor in $B$. Analogously, $b$ has an inneighbor in $A$, so $G$ contains a path~$P$ of length~$3$ from a vertex in $A$ to a vertex in $B$.
Since any edge inside $C'$ or $D'$ can be combined together with $P$ to get a copy of $C_\ell$, we may assume that both $C'$ and $D'$ are independent sets.
Similarly, for any $c \in C$ and $d \in D$, no vertex in $T'$ can be connected with both $c$ and $d$, as otherwise it creates a forbidden sidewalk or a sidewalk of value $1$ that can be combined with $P$ to obtain an $\ell$-cycle. 
Thus, the sets $N^+(c)$, $N^-(c)$, $N^+(d)$, and $N^-(d)$ are disjoint, and their sizes sum up to $4\delta^\pm(G) \geq n+1$, which gives a contradiction. 

\begin{figure}[ht]
{\hfill
\includegraphics{figures/figure-18}
\hfill}
    \caption{A possible structure of $G$ in the proof of Claim~\ref{cla:k4:cong3}: there might be edges inside the extended blobs, but no edges are between the diagonal extended blobs.}
    \label{fig:k4:cong3}
\end{figure}

Figure~\ref{fig:k4:cong3} depicts the only possible structure the graph $G$ might still have. 
Suppose now there is an extended blob, say $A'$, such that the induced subgraph $G[A']$ has positive minimum outdegree. In this case, we find a path of length $3$ inside $A'$ or three disjoint edges inside $A$, and either of these options yields an $\ell$-cycle in $G$.
Thus, for the rest of the proof, let us fix inside every extended blob one vertex that has no outneighbor inside its own extended blob, and let us call the respective vertices $a$, $b$, $c$, and $d$.
Since they can have outneighbors only in the succeeding extended blob and in~$T'$,
summing up their outdegrees yields existence of a vertex $t \in T'$ with at least two inneighbors in $\{a,b,c,d\}$.

By symmetry, we may assume $b$ is an inneighbor of $t$. 
If $t$ has some outneighbor in $B$, then we find a path of length $3$ from $A$ to $B$, which leads to an $\ell$-cycle in the same way as in the proof of \claref{cla:k4:noSW2}. 
If $t$ has at least two outneighbors in $A$, then there is a path of length $3$ from $A$ to $A$, which yields an $\ell$-cycle. 
This implies that $t$ cannot have more that two inneighbors among $\{a,b,c,d\}$, and the two are from consecutive extended blobs.
Without loss of generality, the other inneighbor of $t$ is $a$, hence $t$ must have $n/4 - o(n)$ outneighbors in $C$, and so $n/4 - o(n)$ inneighbors in $B$.
Observe that $a \in T_A$, as otherwise $t \in T_B$ contradicting $t \in T'$.
Therefore, to not contradict the maximality of $H$, $a$~must have an inneighbor in $A$, which creates a path $P$ of length~$4$ from $A$ to $C$ through $B$ and~$t$. 
Since $t$ has no neighbors in $A$ and $D$, so it has neighbors in only two blobs, applying \eqref{eq:sidewalks_bound} for any blob-transversal disjoint from $P$ yields two sidewalks of value $1$ not using $t$. In particular, one of them does not use $a$ and combining this sidewalk with $P$ gives an $\ell$-cycle.
\end{proof}

\begin{claim}\label{cla:k4:cong1}
If $\ell \equiv 1$ mod $4$, then $G$ contains $C_\ell$.
\end{claim}

\begin{proof}
In this case $\ell \geq 9$, $\delta^\pm(G) \geq \frac{n+2}{4}$ and there are no sidewalks of value $1$.
If there exists an edge inside an extended blob, say $B'$, then any outneighbor of its sink in $C$ and any inneighbor of its source in $A$ yields a path of length $3$ from $A$ to $C$. Such a path, however, can be extended to an $\ell$-cycle. 
Therefore, we may assume there are no edges inside the extended blobs; see Figure~\ref{fig:k4:cong1} for a possible structure of the graph.

\begin{figure}[ht]
{\hfill
\includegraphics{figures/figure-17}
\hfill}
\caption{A possible structure of $G$ in the proof of Claim~\ref{cla:k4:cong1}: there might be edges between the diagonal extended blobs, but no edges are inside the extended blobs.}\label{fig:k4:cong1}
\end{figure}

Without loss of generality, let $B'$ be the smallest set among the extended sets. 
In particular, it is smaller than $\frac{n+2}{4}$, and thus every vertex $a \in A$ must have an outneighbor $v$ in $T_C$ or $T'$ (recall $a$ has no outneighbors in $D'$ by \claref{cla:k4:noSW2}). 
Either way, $v$ can have outneighbors only in $T \cup D$, and so it creates sidewalks of value~$3$ with at least $n/4-o(n)$ outneighbors in $D$. 
Analogously, every vertex in $C$ has an inneighbor $w$ in $T_A$ or $T'$ and $w$ yields sidewalks of value $3$ with at least $n/4-o(n)$ inneighbors in $D$.
Observe that no vertex in $T'$ can be in both types of these sidewalks, as otherwise it has both an inneighbor in $A$ and an outneighbor in $C$, which would result in putting it to $T_B$. 

Now if $|B'| < \frac{n-2}{4}$, then every vertex from $A$ and $C$ must participate in two different sidewalks of value $3$.
In particular, there exist three compatible sidewalks that can be used to obtain an $\ell$-cycle. 
On the other hand, if $|B'| \geq \frac{n-2}{4}$, then there is a different extended blob of size smaller than $\frac{n+2}{4}$, which yields existence of another two types of sidewalks of value $3$.
Analogously to the previous case, we find three compatible sidewalks of value~$3$ among these sidewalks and those we have found in the previous paragraph, so $G$ contains an $\ell$-cycle.
\end{proof}

Since Claims~\ref{cla:k4:cong2}, \ref{cla:k4:cong3}, and \ref{cla:k4:cong1} cover all the cycle lengths $\ell \ge 9$ that are divisible by~$3$ but not divisible by~$4$, \lemref{lem:c4blowup} and hence also \thmref{thm:main} for this range of the parameters are both proven.
\end{proof}

\section{Semidegree threshold of $C_6$}\label{sec:l6}

Throughout the whole section, let $G$ be an oriented graph on $n$ vertices with $\delta^\pm(G) \ge \frac{n}{4} + \frac{1}{2}$.
Assume by contradiction that $G$ is $C_6$-free.

We follow similar lines as in~\secref{sec:kbig} and \secref{sec:k4}.
Let us start with an analogue of~\claref{cla:diam5}.

\begin{claim}\label{cla:diam5l6}
For any two vertices $x, y \in V(G)$ and a set $Z \subseteq V(G)$ of size at most $10$, there is a path of length at most $5$ from $x$ to $y$ that is internally vertex disjoint from $Z$.
\end{claim}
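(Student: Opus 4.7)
The plan is to follow the set-expansion strategy used in the proofs of \claref{cla:diameter} and \claref{cla:diam5}, with two modifications: (i) replace the appeal to \lemref{lem:girth} by \corref{cor:flags}, exploiting that $G$ is $C_6$-free rather than $C_3$-free, and (ii) subtract $|Z|$ at every expansion step so that the reached vertex sets stay inside $V(G)\setminus Z$.

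First I would define the forward-reach sets
\[X_1 := N^+(x) \setminus Z \qquad \text{and} \qquad X_{i+1} := X_i \cup \bigl(N^+(X_i) \setminus Z\bigr),\]
together with the symmetric backward-reach sets $Y_1 := N^-(y) \setminus Z$ and $Y_{j+1} := Y_j \cup (N^-(Y_j) \setminus Z)$. By construction, any vertex in $X_i$ is reachable from $x$ by a directed walk of length at most $i$ whose internal vertices lie in $V(G)\setminus Z$, and symmetrically for $Y_j$.

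Next I would bound $|X_i|$ and $|Y_j|$ inductively. For each $i$ with $|X_i|\ge n_0$, apply \corref{cor:flags} to the $C_6$-free subgraph $G[X_i]$ to obtain $v_i \in X_i$ with $|N^+(v_i)\cap X_i| < 0.251\,|X_i|$, and therefore
\[|X_{i+1}| \;\geq\; |X_i| + \delta^+(G) - 0.251\,|X_i| - |Z| \;\geq\; 0.749\,|X_i| + \tfrac{n}{4} + \tfrac{1}{2} - 10.\]
Starting from $|X_1| \geq \tfrac{n}{4} - 9.5$, iterating the recurrence yields $|X_2| \geq 0.437\,n - O(1)$ and $|X_3| \geq 0.577\,n - O(1)$; the same bounds hold for $|Y_2|$ and $|Y_3|$ by the analogous backward expansion.

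Finally, since $X_3, Y_2 \subseteq V(G) \setminus Z$ and
\[|X_3| + |Y_2| \;\geq\; (0.577 + 0.437)\,n - O(1) \;>\; n - |Z|\]
for all $n$ large enough, the two sets must intersect. Any $v \in X_3 \cap Y_2$ yields a directed walk from $x$ to $y$ of length at most $3+2 = 5$ whose internal vertices avoid $Z$; contracting closed subwalks reduces this to a directed path of length at most $5$ with the same internal-avoidance property (when $x\neq y$, neither endpoint can appear as an internal vertex of the contracted path). The only technical care will be in keeping the arithmetic honest and in verifying that $|X_i| \geq n_0$ at each step so that \corref{cor:flags} applies, both of which hold for $n$ sufficiently large.
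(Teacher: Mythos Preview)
Your proposal is correct and follows essentially the same approach as the paper: both arguments expand forward from $x$ and backward from $y$ inside $V(G)\setminus Z$, invoke \corref{cor:flags} on the $C_6$-free induced subgraphs to guarantee a low-outdegree vertex at each step, and conclude by intersecting $X_3$ with $Y_2$. The only cosmetic differences are that the paper uses the slightly weaker constant $0.267$ in place of your $0.251$, and you are a bit more explicit about the walk-to-path contraction and the $|X_i|\ge n_0$ check.
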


\begin{proof}
Let $X_1 := N^+(x)\setminus Z$ and for $i \in \{1,2\}$, let $X_{i+1} := X_i \cup N^+(X_i)\setminus Z$.
Clearly, $|X_1| \ge 0.25n - 10$.

The induced subgraph $G[X_1]$ is $C_6$-free, thus applying \corref{cor:flags} to $G[X_1]$ yields a vertex $x_1\in X_1$ such that $|N^+(x_1)\cap X_1|\leq 0.267 |X_1|$.
Therefore,
\[|X_2|\geq |X_1| + |N^+(x_1)| - 0.267 |X_1| - |Z| > 0.43325 n-20\]
by the semidegree assumption.
Analogously, \corref{cor:flags} applied to $G[X_2]$ yields a vertex $x_2\in X_2$ satisfying $|N^+(x_2)\cap X_2|\leq 0.267 |X_2|$, and hence
\[|X_3|\geq |X_2| + |N^+(x_2)| - 0.267 |X_2| - |Z| > 0.56757225 n-30.\]

Similarly for the inneighborhood of $y$, we have $|Y_1|>0.25n-10$, where $Y_1 := N^-(y)\setminus Z$. 
Applying \corref{cor:flags} to $G$ with all the edges being reversed, we find a set $Y_2\subseteq V(G)\setminus Z$ of size $|Y_2| > 0.43325 n-20$ containing only vertices from which there exist a path to $y$ of length at most $2$. 
Since $|X_3|+|Y_2|>n$, the two sets must intersect.
Therefore, $G$ contains a path of length at most $5$ from $x$ to $y$ that avoids the set $Z$.
\end{proof}

In particular, the previous claim yields that there cannot be a path of length $3$ in the outneighborhood and in the inneighborhood of a given vertex.
Using this, we can actually improve the previous claim. 

\begin{claim}\label{cla:diam4}
For any two vertices $x \in V(G)$ and $y \in V(G)$:
\begin{itemize}
\item if $xy \in E(G)$ then there exists a path of length $2$, $3$, or $4$ from $x$ to $y$, which avoids any fixed vertex, and
\item if $xy \notin E(G)$ then there exists a path of length $2$, $3$, or $4$ from $x$ to $y$, which avoids any fixed $3$ vertices. 
\end{itemize}
\end{claim}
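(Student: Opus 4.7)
The plan is to refine the expansion argument of \claref{cla:diam5l6} by exploiting the structural information noted just before the claim: for every vertex $u \in V(G)$, neither $N^+(u)$ nor $N^-(u)$ contains a directed path of length $3$. In particular, every non-empty subset $S \subseteq N^+(u)$ admits a \emph{sink} $v \in S$ with $N^+(v) \cap S = \emptyset$, and symmetrically every non-empty subset of $N^-(u)$ contains a source.

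First I would apply this observation to the induced subgraph on $N^+(x) \setminus Z$, which is non-empty for sufficiently large $n$. The resulting sink $v$ satisfies $N^+(v) \cap N^+(x) \subseteq Z$, so $|N^+(x) \cup N^+(v)| \ge |N^+(x)| + |N^+(v)| - |Z|$, and every vertex of this union is reachable from $x$ by a path of length $1$ or $2$ whose unique internal vertex (if any) is $v \notin Z$. Symmetrically I would choose a source $w \in N^-(y) \setminus Z$, giving $|N^-(y) \cup N^-(w)| \ge |N^-(y)| + |N^-(w)| - |Z|$ and a length-$1$ or length-$2$ route from each of its members to $y$ with internal vertex $w \notin Z$. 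Any common vertex of the two unions then produces a walk of length $2$, $3$, or $4$ from $x$ to $y$ with all internal vertices outside $Z$; since $G$ is oriented (no loops or $2$-cycles), the only way this walk could fail to be a simple path is through vertex coincidences such as $v=w$, but each such coincidence itself already yields a length-$2$ or length-$3$ path.

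In case 1 (where $xy \in E(G)$ and $|Z|=1$), the bound $\delta^\pm(G) \ge \frac{n}{4}+\frac{1}{2}$ gives $|N^+(x) \cup N^+(v)| + |N^-(y) \cup N^-(w)| \ge n$, and after excluding a bounded number of ``bad'' vertices (namely $x$, $y$, $v$, $w$, and the single element of $Z$) from the counting, the two sets still intersect and produce the desired path. For case 2 ($xy \notin E(G)$ and $|Z|=3$) the analogous bound degrades to roughly $n - 4$; to recover it I would expand one level further, collecting the vertices reachable from $x$ by a path of length at most $3$ whose internal vertices avoid $Z$, and apply \corref{cor:flags} to the induced subgraph on the second-level outneighborhood exactly as in the proof of \claref{cla:diam5l6} to boost the reach above $\frac{n}{2}+|Z|$. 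This enlarged reachability set must then meet $N^-(y) \cup N^-(w)$ even after removing $Z$, again giving a path of length $2$, $3$, or $4$.

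The main obstacle will be the tight bookkeeping in case 2: one must simultaneously keep the sink $v$, the source $w$, and every internal vertex of the resulting path outside $Z$, while still guaranteeing a non-empty intersection. The key leverage will be the $P_3$-free structure of $N^+(u)$ and $N^-(u)$, which provides not just one but an entire ``bottom layer'' of sinks (respectively sources), so that $v$ or $w$ can be replaced whenever a forbidden coincidence with $Z$ arises, for any $Z$ of bounded size.
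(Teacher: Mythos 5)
Your opening structural claim is false, and the failure mode is exactly the configuration the paper has to fight. From ``$N^+(x)$ contains no directed path of length $3$'' you cannot conclude that some $v\in N^+(x)\setminus Z$ has $N^+(v)\cap N^+(x)\subseteq Z$: a disjoint union of directed triangles inside $N^+(x)$ is $P_3$-free, yet every vertex of it has an outneighbor in $N^+(x)$. The correct consequence (the one the paper uses) is only that some $v\in N^+(x)\setminus(Z\cup\{y\})$ has $|N^+(v)\cap N^+(x)|\le 1$, so the union bound becomes $|N^+(x)\cup N^+(v)|\ge 2\delta^\pm(G)-|Z|-1$. With that extra $-1$ on each side, the two reachability sets have total size only about $4\delta^\pm(G)-2|Z|-2\ge n-2|Z|$, and even after adding back the vertices of $Z$, $x$ and $y$ (which lie outside the unions) you reach at most $n$, not $n+1$. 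So the intersection is \emph{not} forced, in either case of the claim; your case~1 already breaks here, not just case~2.

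The missing content is precisely the tight case: when the unions are disjoint and one of them has size exactly $2\delta^\pm(G)-|Z|-1$, every vertex of $N^+(x)\setminus\{y\}$ must have an outneighbor there, forcing $G[N^+(x)\setminus\{y\}]$ to be a vertex-disjoint tiling by directed triangles, with each of its vertices joined to almost all of $B:=N^+(N^+(x)\setminus\{y\})$, and (if no short $x$--$y$ path exists) a cascade of near-complete connections $B\to C\to D$; walking two edges of one such triangle and then through $B$, $C$, $D$ produces a $C_6$, contradicting the section's standing assumption. Your proposal never invokes $C_6$-freeness at this point and has no substitute for this analysis. Your fallback for case~2 also cannot be repaired as stated: expanding a third level from $x$ and meeting $N^-(y)\cup N^-(w)$ yields paths of length up to $3+2=5$, which violates the length-$4$ bound the claim requires (and which \claref{cla:c4chord} and \claref{cla:c6transT} genuinely need), while meeting only $N^-(y)$ would require the three-step reach to exceed $\tfrac{3n}{4}$, far beyond what \corref{cor:flags} delivers.
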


\begin{proof}
For brevity, let $Z$ be the set of all the vertices we want to avoid. We define
$A := N^+(x)\setminus Z$, \hbox{$B := N^+(A\setminus\{y\})\setminus Z$}, $D := N^-(y)\setminus Z$, and $C := N^-(D\setminus\{x\})\setminus Z$.

Since there is no path of length $3$ in $N^+(x)$, there exists a vertex $v \in A\setminus\{y\} \subseteq N^+(x)$ such that $|N^+(v) \cap N^+(x)| \le 1$.
Thus, the set $A\cup B$ has size at least $2\delta^\pm(G)-|Z|-1$. 
Similarly, the set $C\cup D$ has size at least $2\delta^\pm(G)-|Z|-1$. 
If those two sets overlap then there exists the sought path. 

Firstly, suppose that $|A\cup B| + |C\cup D| \ge 4\delta^\pm(G)-2|Z| \ge n+2-2|Z|$ yet the two sets do not overlap.
If $xy \in E(G)$ (hence $|Z| = 1$), then $G$ has at least $n+2-|Z| > n$ vertices, which is not possible.
On the other hand, if $xy \notin E(G)$ (hence $|Z| = 3$ and vertices $x$ and $y$ are not in $A\cup B \cup C \cup D$), then $G$ has $n+4-|Z|>n$ vertices; a contradiction.
We conclude that if $A \cup B$ is disjoint from $C \cup D$, then at least one of the considered sets contains exactly $2\delta^\pm(G)-|Z|-1$ vertices and the other one has size at most $2\delta^\pm(G)-|Z|$.

By symmetry, we may assume $|A\cup B| = 2\delta^\pm(G)-|Z|-1$. 
Since there is no path of length $3$ inside $N^+(x)$ yet $|A \cup B| < 2\delta^\pm(G)-|Z|$, we conclude that every vertex in $A\setminus\{y\}$ has an outneighbor in $A\setminus\{y\}$, so $G[A\setminus\{y\}]$ must be a so-called tiling of $C_3$, that is, $|A\setminus\{y\}|/3$ vertex disjoint directed triangles.
Moreover, every vertex in $A\setminus\{y\}$ must have an edge to almost all the vertices in $B$.
If there is no path of length at least $2$ and at most~$4$ from $x$ to $y$, then every vertex in $B$ needs to be adjacent to almost every vertex in~$C$. 
Since any two vertices in the set $A\setminus\{y\}$ with two inneighbors in $C$ create $C_6$, all but one of the vertices in $A\setminus\{y\}$ must have an edge from almost every vertex in $D\setminus\{x\}$.
Finally, almost every vertex in $C$ is adjacent to almost every vertex in $D$.
Therefore, we can find $C_6$ in $G$ by taking two edges of a directed triangle contained inside $A\setminus\{y\}$ and then walking along the sets $B$, $C$, and $D$.
\end{proof}

\begin{claim}\label{cla:c4chord}
Graph $G$ contain no $C_4$ with a chord. 
\end{claim}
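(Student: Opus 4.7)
The plan is to show that any $C_4$ with a chord forces a $C_6$, contradicting the standing $C_6$-freeness of $G$. Let the hypothetical $C_4$ be $v_1 \to v_2 \to v_3 \to v_4 \to v_1$. A chord is one of the four edges $v_1v_3$, $v_3v_1$, $v_2v_4$, $v_4v_2$, but by cyclically relabelling the $v_i$'s and, if necessary, reversing the orientation of $G$ (an operation that preserves both $\delta^\pm(G) \ge n/4 + 1/2$ and $C_6$-freeness), we may assume that the chord is $v_1 \to v_3$.

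Since $v_1 v_3 \in E(G)$ and $G$ is oriented, $v_3 v_1 \notin E(G)$, so the second bullet of \claref{cla:diam4} supplies a directed path $P$ from $v_3$ to $v_1$ of length $2$, $3$, or $4$ that is internally disjoint from $\{v_2, v_4\}$. The easy case is $|P|=4$: concatenating $P$ with $v_1 \to v_2 \to v_3$ immediately yields a $C_6$.

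When $|P|\in\{2,3\}$ the direct concatenation gives only a $C_4$ or $C_5$, and the plan is to insert a detour vertex on an edge of that short cycle. For $|P|=3$ with $P = v_3 \to a \to b \to v_1$, we hunt for $x$ lying in one of the intersections $N^+(v_1)\cap N^-(v_2)$, $N^+(v_2)\cap N^-(v_3)$, $N^+(v_3)\cap N^-(a)$, $N^+(a)\cap N^-(b)$, $N^+(b)\cap N^-(v_1)$ and outside $\{v_1,v_2,v_3,v_4,a,b\}$; any such $x$ subdivides one edge of the $C_5$ into a $C_6$. For $|P|=2$ with $P=v_3 \to z \to v_1$, we obtain a second vertex $z\neq v_4$ of $N^+(v_3)\cap N^-(v_1)$, and examine the possible adjacencies between $z$ and each of $v_2, v_4$: each admissible orientation closes a fresh $C_6$ through the chord, while the remaining configuration is resolved by one additional invocation of \claref{cla:diam4} on the pair $(z,v_4)$ or $(v_4,z)$.

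The remaining step is to rule out the scenarios where no such detour vertex exists. In those scenarios the relevant neighborhood intersections are confined to the small set of already-used vertices, but $\delta^\pm(G) \ge n/4 + 1/2$ together with the fact (established in the proof of \claref{cla:diam4}) that no out- or in-neighborhood contains a path of length $3$ forces these intersections to be substantially larger; the contradiction falls out. The main obstacle I anticipate is the bookkeeping in the short-path cases: we must avoid inadvertently creating a cycle shorter than $6$ when inserting a detour, and each new application of \claref{cla:diam4} is allowed to forbid only one vertex (if the target pair is already adjacent) or three (otherwise), so forbidden-vertex budgets must be tracked carefully to guarantee that the stitched-together walk is actually a simple $C_6$.
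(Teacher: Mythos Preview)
Your plan has a genuine gap in the short-path cases, and the paper's argument avoids precisely the bookkeeping you anticipate by choosing a different pair for \claref{cla:diam4}.

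For $|P|=3$, you propose to subdivide an edge of the resulting $C_5$ by finding some $x\in N^+(u)\cap N^-(w)$ outside six fixed vertices. But nothing bounds $|N^+(u)\cap N^-(w)|$ from below for an individual edge $uw$; the semidegree hypothesis only gives $|N^+(u)|+|N^-(w)|\ge \tfrac{n}{2}+1$, which says nothing about the intersection. The remark that ``no out- or in-neighbourhood contains a $P_3$'' does not force these five intersections to be large, so the subdivision step is unjustified. For $|P|=2$, a single adjacency between $z$ and $v_2$ or $v_4$ does not close a $C_6$ through the chord (try the cases: you get $C_3$, $C_4$, or $C_5$), and your fallback application of \claref{cla:diam4} to $(z,v_4)$ may return a path of length~$4$, which again fails to close to a $C_6$ with the available return paths.

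The paper sidesteps all of this by applying \claref{cla:diam4} not to the diagonal $(v_3,v_1)$ but to the edge $v_4v_1$ of the $C_3$ that the chord creates ($v_1\!\to\! v_3\!\to\! v_4\!\to\! v_1$), avoiding $v_3$. The point is that from $v_1$ back to $v_4$ there are now \emph{two} return paths, of lengths~$2$ (via the chord, $v_1\!\to\! v_3\!\to\! v_4$) and~$3$ (around the $C_4$, $v_1\!\to\! v_2\!\to\! v_3\!\to\! v_4$); these pair with \claref{cla:diam4} outputs of lengths~$4$ and~$3$ respectively to give $C_6$ immediately, and the length-$3$ path automatically avoids $v_2$ because $v_4v_2,\,v_2v_1\notin E(G)$. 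Only the length-$2$ output survives, and a second, symmetric application to the edge $v_3v_4$ (avoiding $v_1$) again leaves only a length-$2$ path; the two length-$2$ detours then splice with $v_1\!\to\! v_2\!\to\! v_3$ into a $C_6$. The paper also treats the two-chord configuration separately, since the ``automatically avoids $v_2$'' step fails once $v_2v_4$ is present; your uniform treatment does not account for this.
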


\begin{proof}
Firstly, we show that $G$ has no $4$ vertices spanning $C_4$ with exactly one chord. 
Suppose for contradiction there are 4 vertices $a$, $b$, $c$, and $d$ with edges $ab$, $bc$, $cd$, $da$, and $ac$.
\claref{cla:diam4} yields a path $P$ of length $2$, $3$ or~$4$ from $d$ to $a$ avoiding $c$.
If $P$ has length~$4$ we get $C_6$ with vertex $c$.
If $P$ has length $3$, then it needs to avoid vertex~$b$, so it also creates $C_6$.
Thus, there is a path of length $2$ from $d$ to $a$.
Similarly, the path from $c$ to $d$ avoiding $a$ needs to be of length $2$.
However, those two paths of length $2$ yield to $C_6$.

\begin{figure}[ht]
{\hfill
\includegraphics{figures/figure-9}
\hfill}
\caption{Setting in the proof of \claref{cla:c4chord}.}\label{fig:c4chord}
\end{figure}

Now, suppose there is $C_4$, again with vertices $a$, $b$, $c$, and $d$, that has two chords $ac$ and $bd$.
As before, there needs to be a vertex $e$ and edges $de$ and $ea$.
The only way not to create $C_6$ by considering a path from \claref{cla:diam4} from $c$ to $d$ avoiding $a$ is that the only path of length $3$ from $c$ to $d$ uses both vertices $e$ and $b$, and so there are edges $ce$ and $eb$. This is depicted in \figref{fig:c4chord}.
Now, consider a path from \claref{cla:diam4} from $c$ to $d$ avoiding~$e$.
There are no paths of length $2$ and $3$, and a path of length $4$ avoiding the vertex $a$ creates~$C_6$.
Therefore, $a$ needs to be in 
$N^+(c) \cup N^+(N^+(c)\setminus\{d\})$.
This means that there exists a vertex $f$, different from $d$ and $e$, such that $cf$ and $fa$ are both edges in $G$.
Since the vertices $a$, $b$, $c$ and $f$ create $C_4$ with a chord, the previous paragraph yields there needs to be an edge $bf$ or $fb$.
However, in both cases, we find $C_6$, which finishes the proof.
\end{proof}

\begin{claim}\label{cla:c6transT}
Graph $G$ cannot contain a transitive triangle.
\end{claim}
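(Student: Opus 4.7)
I will prove Claim~\ref{cla:c6transT} by contradiction: assume $G$ contains a transitive triangle on vertices $x, y, z$ with edges $xy$, $yz$, $xz$, and derive a copy of $C_6$ in $G$.

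First, I observe that $N^+(z) \cap N^-(x) = \emptyset$: any $v$ in this intersection would give $x \to y \to z \to v \to x$, a $C_4$ with chord $xz$, contradicting \claref{cla:c4chord}. Since $zx \notin E(G)$, I apply \claref{cla:diam4} to the pair $(z, x)$ to obtain a directed path of length $2$, $3$, or $4$ from $z$ to $x$ avoiding $y$. Length $2$ is ruled out by the empty intersection, and length $4$ (with internal vertices $v_1, v_2, v_3$, all distinct from $y$ by avoidance and from the endpoints) immediately gives the $C_6 = x \to y \to z \to v_1 \to v_2 \to v_3 \to x$. Hence the path has length $3$, and I write it as $z \to a \to b \to x$ with $a, b \notin \{x, y, z\}$. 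Applying \claref{cla:c4chord} to the directed $4$-cycle $x \to z \to a \to b \to x$ shows that none of $xa$, $ax$, $zb$, $bz$ is an edge of $G$.

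Next, I apply \claref{cla:diam4} to the pair $(z, a)$ (with $za \in E(G)$, so I may avoid one vertex; I choose~$y$). A length-$2$ path $z \to v \to a$ yields the $C_6 = z \to v \to a \to b \to x \to y \to z$: the vertex $v$ is distinct from $y$ by avoidance, from $x$ and $b$ because $zx$ and $zb$ are non-edges, and from $a, z$ as the path endpoints. A length-$3$ path $z \to v_1 \to v_2 \to a$ extends, via the transitive edge $xz$, to the $C_6 = z \to v_1 \to v_2 \to a \to b \to x \to z$, with the six vertices distinct by the non-edges $xa$, $zb$, $ba$ and by the avoidance of $y$.

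The remaining length-$4$ subcase is the main obstacle. Here I plan to repeat the above analysis with the pairs $(a, b)$ (avoiding $y$) and $(b, x)$ (avoiding $y$); each pair is easily seen to yield a $C_6$ when the corresponding \claref{cla:diam4}-path has length $2$ or $3$ (in the length-$3$ cases one closes the cycle by the chord edge $xz$ exactly as above). If all three pairs $(z, a)$, $(a, b)$, $(b, x)$ are forced into the length-$4$ subcase, the non-existence of shorter paths translates into strong disjointness relations such as $N^+(z) \cap N^-(a) = \emptyset$ and $N^+(b) \cap N^-(x) = \emptyset$. Combining these with the already-established $N^+(z) \cap N^-(x) = \emptyset$, the minimum semidegree bound $\delta^\pm(G) \ge \tfrac{n}{4}+\tfrac12$, and further applications of \claref{cla:c4chord} to the $4$-cycles formed with the length-$4$ path vertices, I expect to exhaust all configurations and produce the required $C_6$, yielding a contradiction.
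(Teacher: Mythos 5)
Your set-up matches the paper's: you locate the transitive triangle, rule out return paths of lengths $2$ and $4$ from the sink to the source, and arrive at the pentagon $x\to y\to z\to a\to b\to x$ with $xa$, $ax$, $zb$, $bz$ all non-edges. The length-$2$ and length-$3$ analyses for the pair $(z,a)$ are also correct. But the proof then stops being a proof: the ``remaining length-$4$ subcase'' is exactly the hard case, and ``I expect to exhaust all configurations and produce the required $C_6$'' is a declaration of intent, not an argument. Iterating \claref{cla:diam4} over the pairs $(a,b)$ and $(b,x)$ only yields more disjointness relations of the form ``out-neighbourhood of one pentagon vertex misses the in-neighbourhood of the vertex two steps ahead''; it gives you no mechanism for terminating the case analysis, and nothing in your sketch explains how the length-$4$ paths produced in the residual case are to be recombined into a $C_6$. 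As written, the claim is not established.

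The missing idea is that one should stop chasing paths and count. The paper shows that the four sets $N^-(x)$, $N^+(z)$, $N^-(a)\setminus\{y\}$ and $N^+(b)\setminus\{y\}$ (in your notation: in-neighbourhood of the source, out-neighbourhood of the sink, and the in-/out-neighbourhoods of the two new pentagon vertices) are \emph{pairwise} disjoint. Two of the six required disjointness statements are the ones you already have (a common vertex closes a $C_6$ or a chorded $C_4$ directly); the other four --- e.g.\ $N^-(x)\cap N^-(a)=\emptyset$ and $N^+(z)\cap N^+(b)=\emptyset$ --- are of a different, ``same-direction'' type that your length-$2$/length-$3$ case analysis never touches, and each is proved by one more application of \claref{cla:diam4} and \claref{cla:c4chord} to a hypothetical common neighbour. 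Once the disjointness is in hand, $y$ lies in at most one of $N^-(a)$, $N^+(b)$ (else a chorded $C_4$), so the union of the four sets has size at least $4\delta^\pm(G)-1\ge n+1>n$, a contradiction. Without this counting step, or some substitute for it, your argument does not close.
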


\begin{proof}
Suppose for contrary there is a transitive triangle with edges $ab$, $bc$, and $ac$.
From \claref{cla:diam4}, there is a path of length $2$, $3$ or $4$ from $c$ to $a$ avoiding $b$.
Since length $4$ creates~$C_6$ and length $2$ creates $C_4$ with a chord, which is forbidden by \claref{cla:c4chord}, it must have length~$3$.
Say its edges are $cd$, $de$, and $ea$.
\claref{cla:c4chord} gives also that there are no other edges between vertices $a$, $c$, $d$, and $e$.

\begin{figure}[ht]
{\hfill
\includegraphics{figures/figure-10}
\hfill}
\caption{Setting in the proof of \claref{cla:c6transT}.}\label{fig:c6transT}
\end{figure}

We assert that the sets $N^-(a)$, $N^+(c)$, $N^-(d)\setminus\{b\}$ and $N^+(e)\setminus\{b\}$, depicted in \figref{fig:c6transT}, are pairwise-disjoint:
\begin{itemize}
\item $N^-(a) \cap N^+(c) = \emptyset$ because it yields $C_4$ with a chord contradicting \claref{cla:c4chord}.
\item If there is $x \in N^-(a) \cap N^-(d)\setminus\{b\}$ then from \claref{cla:c4chord} there is no edge $cx$ and from \claref{cla:diam4} there is a path of length $2$, $3$, or $4$ from $c$ to $x$ avoiding $a$, $b$, and $e$. Path of length $2$ needs to avoid also vertex $d$, so in each case we have $C_6$.
\item $N^-(a) \cap N^+(e)\setminus\{b\} = \emptyset$ because it creates $C_6$.
\item $N^+(c) \cap N^-(d)\setminus\{b\} = \emptyset$ because it creates $C_6$.
\item If there is $x \in N^+(c) \cap N^+(e)\setminus\{b\}$ then from \claref{cla:c4chord} there is no edge $xa$ and from \claref{cla:diam4} there is a path of length $2$, $3$, or $4$ from $x$ to $a$ avoiding $b$, $c$, and $d$. Path of length $2$ also has to avoid vertex $d$, so in each case we have $C_6$.
\item Having $x \in N^-(d)\setminus\{b\} \cap N^+(e)\setminus\{b\}$ from \claref{cla:diam4} we get a path of length $2$, $3$, or $4$ from $d$ to $e$ avoiding vertex $x$. A path of length $2$ together with $x$ creates $C_4$ with a chord contradicting \claref{cla:c4chord}. A path of length $3$ avoids also vertices $a$ and $c$, and so creates $C_6$. A path of length $4$ creates $C_6$ with vertex $x$.
\end{itemize}

Observe that the vertex $b$ cannot be in both sets $N^-(d)$ and $N^+(e)$, because it creates a forbidden $C_4$ with a chord.  
Thus $|N^-(a) \cup N^+(c) \cup N^-(d) \cup N^+(e)| \ge 4\delta^\pm(G) > n$, which finishes the proof of the claim.
\end{proof}

Now, we are ready finish the proof of the case $\ell = 6$. 
For any edge $ab \in E(G)$ consider the sets $N^+(a)$, $N^-(a)$, $N^+(b)$ and $N^-(b)$.
Since \[|N^+(a)|+|N^-(a)|+|N^+(b)|+|N^-(b)| \ge 4\delta^\pm(G) \ge n+2,\] \claref{cla:c6transT} yields that there are at least two directed triangles containing the edge $ab$. 

Pick any directed triangle $xyz$ in $G$, and for each edge $xy$, $yz$, and $zx$ consider a directed triangle different from $xyz$ containing this edge.
Clearly, these three directed triangles must be edge disjoint, which readily yields a copy of $C_6$ in $G$.

\section{Other orientations of cycles}\label{sec:otherorientations}

As noted in \cite{KelKO10}, \conjref{conj:main} (hence our \thmref{thm:main}) yields the asymptotic semidegree threshold for any given cycle with a fixed orientation apart from the directed triangle.
Through this whole section, we write oriented cycle to denote a cycle with an arbitrary orientation of its edges, not necessarily the cyclic orientation as in the directed cycle $C_\ell$.

In order to state the main result of this section, we need to define the so-called cycle-type.
For an oriented cycle $C$, we define the \emph{cycle-type} $t(C)$ to be the absolute value of the difference between the number of edges oriented forwards in~$C$ and the number of edges oriented backwards in~$C$ with respect to the cyclic orientation of $C$.
In particular, the $\ell$-cycle has a cycle-type $\ell$, while cycles of cycle-type $0$ are precisely those orientations for which there is a homomorphism into a directed path of an appropriate length.
Observe that if $t(C) \geq 3$, then the cycle-type is equal to the maximum length of a directed cycle into which there is a homomorphism from $C$.

For an arbitrary oriented cycle $C$ with $t(C)\geq 1$ let $k(C)$ be defined as the smallest integer greater than 2 that does not divide $t(C)$.
Notice that if $C$ is an $\ell$-cycle for $\ell \geq 4$, then $k(C)$ coincides with the definition of $k$ in the previous sections.

We are now ready to state the main result of this section.
\begin{theorem}\label{thm:otherorientations}
For every oriented cycle $C \neq C_3$ and $\eps > 0$ there exists $n_0 =n_0(C,\eps)$ such that the following is true for every oriented graph $G$ on $n \ge n_0$ vertices:
\begin{itemize}
\item
If $t(C)\geq 1$ and $\delta^\pm(G) \geq \frac{n}{k(C)}+\eps n$, then $G$ contains $C$.
\item
If $t(C)=0$ and $\delta^\pm(G) \geq \eps n$, then $G$ contains $C$.  
\end{itemize}
Moreover, the bounds on the semidegree are asymptotically the best possible.
\end{theorem}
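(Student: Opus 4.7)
For the lower bound, in the case $t(C) \ge 1$ we take $G$ to be any balanced blow-up of $C_{k(C)}$ on $n$ vertices. This graph has minimum semidegree exactly $n/k(C)$, and since a homomorphism $C \to C_m$ exists if and only if $m \mid t(C)$ (a winding-number computation: forward edges of $C$ advance the image around $C_m$ by $+1$, backward ones by $-1$, and the total advance $F - B = \pm t(C)$ must vanish modulo $m$) while $k(C) \nmid t(C)$ by definition, no homomorphic image, and hence no copy, of $C$ lies in $G$. For $t(C) = 0$ the empty oriented graph already has $\delta^\pm = 0$ and is $C$-free, so any threshold with positive coefficient in $n$ is asymptotically optimal.

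For the upper bound, the plan is to apply the directed regularity lemma in its semidegree form to $G$, producing a reduced oriented graph $R$ on $L = L(\eps, C)$ vertices with $\delta^\pm(R) \ge L/k(C) + (\eps/3)L$ when $t(C) \ge 1$, or $\delta^\pm(R) \ge (\eps/3)L$ when $t(C) = 0$. We then locate a small template $H \subseteq R$ into which $C$ maps homomorphically, and conclude by a standard counting-lemma argument applied to the regular pairs underlying the clusters of $H$. When $t(C) \ge 4$ the template is simply $H = C_{t(C)}$, furnished by \thmref{thm:main} applied to $R$, and $C \to C_{t(C)}$ is immediate. When $t(C) = 0$, $H$ is any sufficiently long directed path in $R$, obtained by iterating the minimum-semidegree assumption. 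When $t(C) \in \{1,2\}$, so $k(C) = 3$, \thmref{thm:KelKO} applied to $R$ yields $C_m \subseteq R$ for every $m \ge 4$; I would augment such a $C_m$ with an appropriate ``backward'' shortcut inside $R$, which the slack $(\eps/3)L$ above the Caccetta-H\"aggkvist threshold $L/3$ should supply, and then embed $C$ by decomposing it into its bounded number of maximal directed sub-paths.

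The main obstacle is the case $t(C) = 3$: the hypothesis $\delta^\pm(R) \ge L/4 + (\eps/3)L$ lies strictly below the Caccetta-H\"aggkvist threshold, so $R$ need not contain $C_3$, yet $C$ does not map to any $C_m$ with $m \ge 4$. My plan here is to invoke \thmref{thm:k4walk} on $R$ with some fixed $\ell \ge 9$ divisible by $3$ but not by $4$: either $R$ contains a closed walk of length $\ell$, from which a template for $C$ can be extracted by a sidewalk-style analysis in the spirit of \lemref{lem:kbig_sidewalks} and \corref{cor:additive}, or $R$ is nearly bipartite and so is $G$ itself. In the latter subcase, however, the strict excess $+\eps n$ over the bipartite semidegree ceiling $n/4$ must force $\Omega(\eps n^2)$ edges inside one part of $G$, and these ``odd'' shortcuts combined with sufficiently long even-length directed paths across the bipartition should realize a copy of $C$, using that $|V(C)|$ is odd whenever $t(C)$ is.
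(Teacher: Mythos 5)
Your lower-bound constructions and the cases $t(C)\ge 4$ and $t(C)=0$ essentially coincide with the paper's argument (\thmref{thm:main} plus supersaturation to get a blow-up of $C_{t(C)}$, respectively a long directed path), and for $t(C)\in\{1,2\}$ the paper simply cites Proposition~11 of \cite{KelKO10}, so your admittedly vague sketch there is not the main concern. The genuine gap is the case $t(C)=3$, which is where the new content of this section lies, and your dichotomy via \thmref{thm:k4walk} does not close either branch. In the non-bipartite branch you obtain a closed \emph{directed} walk of length $\ell$, hence some directed cycles $C_m$ with $m\ge 3$; but $C$ is homomorphic to $C_m$ only when $m$ divides $t(C)=3$, so unless that walk happens to yield a $C_3$ it provides no template for $C$ at all, and the ``sidewalk-style analysis'' of \lemref{lem:kbig_sidewalks} and \corref{cor:additive} is unavailable here because it presupposes the near-blow-up-of-$C_k$ structure with $k\ge5$, which you do not have. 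In the near-bipartite branch, your count of $\Omega(\eps n^2)$ edges inside one part is correct, but converting one such edge plus crossing paths into a homomorphic image of an \emph{arbitrary} oriented cycle with $t(C)=3$ requires exhibiting a concrete target subgraph -- a closed oriented walk of winding number exactly $3$ with enough room to absorb all of $V(C)$ -- and nothing in your sketch constructs one.

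The paper sidesteps the dichotomy entirely: \lemref{lem:otheror:k3deg} shows by a direct neighborhood-counting argument that $\delta^\pm(G)\ge(n+1)/4$ already forces $C_3$ or the $4$-shortcut $D$ (the $5$-vertex cycle of type $3$); it then extends $D$ greedily by two directed paths of length $|V(C)|$ to a graph $D^\ell$, passes to a blow-up by supersaturation, and verifies that every oriented cycle $C\ne C_3$ with $t(C)=3$ is homomorphic to $D^\ell$. If you wish to keep your route, you would still need to prove an analogue of that lemma in each of your two branches; the missing ingredient is precisely the existence of a $D$-like subgraph, not the bipartite structure theory.
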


In order to prove \thmref{thm:otherorientations} for the case $t(C)=3$, we need an auxiliary lemma.
Let $D$ denote the cycle on $5$ vertices with $t(D)=3$, that is, the $4$-shortcut depicted in \figref{fig:shortcuts}.

\begin{lemma}\label{lem:otheror:k3deg}
Every $n$-vertex oriented graph $G$ with $\delta^\pm(G) \geq \frac{n+1}{4}$ contains $C_3$ or $D$.
\end{lemma}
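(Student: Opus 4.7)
I would argue by contradiction, supposing that $G$ is both $C_3$-free and $D$-free while $\delta^\pm(G) \geq (n+1)/4$. The key preliminary observation is that, for every directed path $x_0 \to x_1 \to x_2 \to x_3$ of length three in $G$, the outneighborhoods $N^+(x_0)$ and $N^+(x_3)$ are disjoint: a common outneighbor $w$ would, together with the path, yield five vertices that under the $C_3$-free oriented-graph assumption must be distinct (the degenerate cases $w \in \{x_1,x_2,x_3\}$ each produce a $C_3$ or a pair of antiparallel edges), hence a copy of $D$. Reversing all edges gives the symmetric statement $N^-(x_0) \cap N^-(x_3) = \emptyset$.

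Next, I would take a longest directed path $P = v_0 v_1 \ldots v_k$ in $G$. Maximality together with $C_3$-freeness forces $N^+(v_k) \subseteq \{v_0, \ldots, v_{k-3}\}$, so writing $I := \{i : v_k \to v_i\}$ gives $|I| \geq (n+1)/4$. Applying $D$-freeness to the length-four walk $v_k \to v_i \to v_{i+1} \to v_{i+2} \to v_{i+3}$ along $P$ shows that $i$ and $i+3$ cannot both lie in $I$ when $i+3 \leq k-3$. A decomposition of $I$ by residue modulo $3$ then forces $|I| \leq \lceil (k+1)/2 \rceil$, so $k \geq (n-1)/2$, and symmetric reasoning from $N^-(v_0)$ yields the same bound.

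With such a long path, the preliminary observation delivers pairwise-disjointness $N^+(v_i) \cap N^+(v_{i+3}) = \emptyset$ for every $i \in \{0, \ldots, k-3\}$, together with the symmetric disjointness for inneighborhoods. Combined with the forbidden shortcut patterns $v_i \to v_{i+4}$ (which creates $D$) and $v_i \to v_{i-2}$ (which creates $C_3$), these constraints should force the outneighborhoods along $P$ to rotate consistently with the cyclic structure of $C_4$; together with a careful treatment of the vertices off $P$ and at the boundary of $P$, this will push $G$ to be homomorphic to a balanced blow-up of $C_4$. Since any such blow-up on $n$ vertices satisfies $\delta^\pm = \min_i |A_i| \leq n/4 < (n+1)/4$, this yields the desired contradiction. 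The main technical obstacle is precisely this final structural step: cleanly converting the pairwise $P_3$-disjointness into a genuine $C_4$-homomorphism, in particular handling the (up to $(n+1)/2$) off-path vertices and the boundary behaviour near $v_0$ and $v_k$.
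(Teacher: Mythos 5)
Your preliminary observation ($N^+(x_0)\cap N^+(x_3)=\emptyset$ across any directed path $x_0x_1x_2x_3$, and its in-neighbourhood analogue) and the longest-path computation forcing $k\ge n/2-O(1)$ are both correct. But the proof stops exactly where the work begins: the ``final structural step'' you defer is not a routine technicality, and the route you propose for it cannot succeed as stated. Your plan is to show that a $\{C_3,D\}$-free graph with this semidegree is homomorphic to a blow-up of $C_4$ and then contradict $\delta^\pm(G)\le n/4$. However, $\{C_3,D\}$-freeness together with semidegree essentially $n/4$ does not confine $G$ to blow-ups of $C_4$: any Eulerian orientation of $K_{n/2,n/2}$ is $\{C_3,D\}$-free (the underlying graph is bipartite, so there is no odd closed walk and no chord between the two endpoints of a path of length $4$, which lie in the same part), has $\delta^\pm=n/4$, and in general contains a $C_6$, hence is not homomorphic to $C_4$. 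So the disjointness constraints you extract along $P$ --- which in any case only relate indices at distance exactly $3$, not $6$ or more, so you cannot simply sum many outneighbourhoods --- are fully compatible with a second, genuinely different near-extremal structure that your sketch never rules out. Any completion along these lines would have to detect and eliminate the bipartite-type configurations separately, and nothing in the proposal indicates how; this is a real gap, not a loose end.

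For contrast, the paper's argument is short and needs no global structure. Since $N^+(v)$ contains no path of length $3$ (your observation with the path inside $N^+(v)$ and $v$ as the source of $D$) and no $C_3$, some $s\in N^+(v)$ has $N^+(s)\cap N^+(v)=\emptyset$, and dually for some $t\in N^-(v)$; the four sets $N^+(v)$, $N^+(s)$, $N^-(v)$, $N^-(t)$ have total size at least $n+1$, and every pairwise intersection except $N^+(s)\cap N^-(t)$ yields a $C_2$ or $C_3$, so $v$ lies on an induced $C_4$, say $v_1v_2v_3v_4$. Around this $C_4$, $D$-freeness makes consecutive out-neighbourhoods disjoint and consecutive in-neighbourhoods disjoint, while $C_3$-freeness gives $N^-(v_i)\cap N^+(v_{i+1})=\emptyset$. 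Then $\sum_i|N^+(v_i)|>n$ produces $x\in N^+(v_1)\cap N^+(v_3)$, and $|N^+(v_2)|+|N^-(v_2)|+|N^+(v_3)|+|N^-(v_3)|>n$ produces $y\in N^+(v_2)\cap N^-(v_3)$, whence $v_1\to v_2\to y\to v_3\to x$ together with $v_1\to x$ is a copy of $D$. If you want to salvage your approach, you would at minimum need an analogous local finishing move; as written, the argument does not reach a contradiction.
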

\begin{proof}
Suppose for contradiction there is an oriented graph $G$ on $n$ vertices with semidegree $\delta^\pm(G) \geq (n+1)/4$, not containing $C_3$ and $D$. For any vertex $v \in V(G)$ both $N^+(v)$ and $N^-(v)$ do not contain a path of length $3$, since otherwise there is $D$ in $G$.
In particular, there is $s \in N^+(v)$ such that $N^+(s)$ is disjoint from $N^+(v)$.
Analogously to the reasoning in \claref{cla:diam4}, we find $C_4$ in $G$ that contains $v$. 

Let $v_1$, $v_2$, $v_3$, and $v_4$ be consecutive vertices of some copy of $C_4$ in $G$. Note that the copy must be induced since $G$ is $C_3$-free.
Also, as $G$ is $D$-free, for $i\in[4]$, the sets $N^+(v_i)$ and $N^-(v_i)$ are disjoint from $N^+(v_{i+1})$ and $N^-(v_{i+1})$, respectively, where the indices are taken modulo $4$.
Similarly, the $C_3$-freeness of $G$ yields that the sets $N^-(v_i)$ and $N^+(v_{i+1})$ are disjoint.

By the bound on the outdegree of $G$, we have $\sum_{i\in[4]} |N^+(v_i)| > n$.
Therefore, there must be a common outneighbor of two non-consecutive vertices of the $C_4$.
Without loss of generality, let $x \in N^+(v_1) \cap N^+(v_3)$.
However, $\delta^\pm(G) \geq (n+1)/4$ yields there exists a vertex $y \in N^+(v_2) \cap N^-(v_3)$ and $v_1,v_2,y,v_3,x$ is a copy of $D$; a~contradiction. 
\end{proof}

With this lemma, we can prove \thmref{thm:otherorientations}.

\begin{proof}[Proof of \thmref{thm:otherorientations}]
Since the case $t(C)\leq 2$ was proven in~\cite{KelKO10}*{Proposition 11}, it remains only to prove the case $t(C) \ge 3$.
Let $\ell:=|V(C)|$ and $G$ be a graph contradicting the statement of the theorem.
From the regularity lemma for directed graphs of Alon and Shapira~\cite{AloS04} one can derive the blow-up lemma for directed graphs (the details can be found for example in \cite{KelKO08}). This gives that if $t(C)\geq 4$ then \thmref{thm:main} yields that $G$ contains an $\ell$-blow-up of $C_{t(C)}$.
In particular, $G$ contains $C$, which finishes the proof in the case $t(C)\geq 4$.

Using the same argument for $t(C)=3$ and $C \neq C_3$, we obtain that $G$ cannot contain $C_3$. Therefore, from \lemref{lem:otheror:k3deg}, $G$ contains $D$. 
Since $\delta^\pm(G) \ge 2\ell+5$, we can greedily extend a copy of $D$ to the graph $D^\ell$ obtained by adding a path of length $\ell$ starting in the sink of $D$ and a path of length $\ell$ ending in the source of $D$. This is depicted in \figref{fig:c6dl}.
By the blow-up lemma, we can assume that $G$ contains not only $D^\ell$, but also an $\ell$-blow-up of $D^\ell$.

\begin{figure}[ht]
{\hfill
\includegraphics{figures/figure-11}
\hfill}
\caption{The graph $D^\ell$ consisting of a copy of $D$ and paths of length $\ell$ starting in the sink of $D$ and ending in the source of $D$.}\label{fig:c6dl}
\end{figure}

In order to find a copy of $C$ in $G$, it remains to prove that $C$ is homomorphic to $D^\ell$. 
Without loss of generality, assume $C$ has more forward edges than backward edges.
Since $t(C)=3$ and $C$ is not a directed triangle, $C$ contains two different vertices $x$ and $y$ such that there are exactly $4$ more forward edges than backward edges in $C$ between $x$ and $y$.

Let $u$ and $v$ be the respective source and sink of $D$ in $D^\ell$,
and $P_u$ and $P_v$ the paths ending in $u$ and starting and $v$, respectively.
Observe that we can homomorphically map the segment of $C$ between $x$ and $y$ by mapping $x$ to $u$, $y$ to $v$ and the other vertices of the segment to $V(D^\ell)$ in such a way that no edge of this segment is mapped to $uv$.
On the other hand, the segment of $C$ between $y$ and $x$ can be mapped to the vertices of $P_u$ and~$P_v$ so that $x$ is mapped $u$, $y$ to $v$ and every edge of this segment is mapped to $E(P_u) \cup E(P_v) \cup \{uv\}$. Combining the mappings of the two segments yields a homomorphism from $C$ to $D^\ell$.

The moreover part of the theorem follows from the fact that balanced blow-ups of a cycle of length $k(C)$ are $C$-free.
\end{proof}

\section{Concluding remarks}\label{sec:outro}

In this paper, for any given $\ell \ge 4$ we have determined the exact value of the semidegree threshold forcing $C_\ell$ in large enough oriented graphs.
The threshold is essentially given by the degrees in a balanced blow-up of $C_k$, where $k$ is the smallest integer greater than~$2$ that does not divide $\ell$.
However, unless $k=3$ or $\ell \equiv 3$ mod $12$, one must add to the term $\frac{n}{k}$ coming from the blow-ups of $C_k$ an extra term $\frac{k-1}{2k}$ to overcome certain number-theoretic flavored perturbations.
What remains is to determine the semidegree threshold of $C_3$, which is a long-standing open problem.
\begin{conjecture}[\cite{Ham82}]\label{conj:BCW:trg}
Any $n$-vertex oriented graph $G$ with the minimum semidegree $\delta^\pm(G) \ge \frac n3$ contains~$C_3$.
\end{conjecture}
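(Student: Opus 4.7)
The plan is to attack Conjecture~\ref{conj:BCW:trg} via the same two-layered strategy that has driven progress on the triangle case of Caccetta--H\"aggkvist: a flag-algebra computation that brings the semidegree threshold below some constant $c<1/3$, followed by a stability analysis that closes the remaining gap. I would first try to upgrade \thmref{thm:CH-C3} by redoing the flag algebra SDP of Hladk\'y--Kr\'al'--Norin with the added linear constraints expressing $\delta^-(G)\geq \delta^\pm(G)$ on top of $\delta^+(G)\geq\delta^\pm(G)$. The symmetry between in- and out-degree doubles the number of usable pointed densities, and I would expect this to push the threshold below the current $0.3388n$ bound of De Joannis de Verclos, Sereni and Volec by a genuine constant, though certainly not all the way to $n/3$.

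To close the remaining gap, I would aim for a stability theorem of the flavour of \thmref{thm:kbig_walk} and \thmref{thm:k4walk}: any $C_3$-free oriented graph with $\delta^\pm(G)\geq n/3 - \varepsilon n$ must be close in edit distance to a blow-up of some $C_k$ for $k\geq 4$. Since any blow-up of $C_k$ has semidegree at most $n/k\leq n/4 < n/3$, such a stability statement together with a perturbation argument similar to \lemref{lem:kbig_sidewalks} and \lemref{lem:c4blowup} would rule out extremal configurations at $\delta^\pm(G)\geq n/3$. The structural input here would come from analysing outneighborhoods: since $G$ is $C_3$-free, the set $N^+(v)$ spans no directed $2$-path of certain types, and iterating \lemref{lem:girth} in the style of \claref{cla:diameter} and \claref{cla:diam5l6} should force $G$ to have small directed diameter and hence many short closed walks whose lengths one controls modulo small primes.

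A complementary angle is a careful shortcut analysis in the spirit of \claref{cla:shortcuts}: one would classify the possible $s$-shortcuts in a near-extremal $C_3$-free graph and rule them out one by one using Kneser-type additive arguments (\thmref{thm:kneser}, \lemref{lem:additive}) to combine closed walks of prescribed residues modulo~$3$. If one can exclude all short shortcuts, then a partition into iterated outneighborhood layers $X_1,X_2,\ldots$ should behave as in a blow-up of a directed cycle, contradicting the $C_3$-freeness for semidegree exactly~$n/3$.

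The main obstacle is of course that \conjref{conj:BCW:trg} has resisted serious attack since Hamidoune's formulation in 1982 and is strictly stronger than the triangle case of the original Caccetta--H\"aggkvist conjecture, for which the best known constant $0.3388n$ is still bounded away from $n/3$. The flag algebra SDPs appear to hit a numerical barrier well above $n/3$, and it is unclear whether the extra semidegree constraint is enough to break through. The stability step also carries a genuine difficulty that is absent from our $k\geq 4$ analysis: there is no known analogue of \thmref{thm:kbig_walk} or \thmref{thm:k4walk} for $C_3$, because the extremal $C_3$-free configurations near the threshold seem to form a rich family (blow-ups of $C_4, C_5, C_7,\ldots$, random Cayley-like oriented graphs on $\mathbb{Z}_p$, and their hybrids) rather than a single rigid template. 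Any realistic route to a full proof therefore hinges on a classification of dense $C_3$-free oriented graphs that goes well beyond what is currently available.
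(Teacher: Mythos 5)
The statement you were asked to prove is Conjecture~\ref{conj:BCW:trg}, which the paper does \emph{not} prove: it is stated explicitly as a long-standing open problem (Hamidoune, 1982), and the concluding section records that determining the semidegree threshold of $C_3$ remains open and is believed to be of essentially the same difficulty as the triangle case of the Caccetta--H\"aggkvist conjecture. So there is no proof in the paper to compare yours against, and your submission is not a proof either --- it is a research programme in which every load-bearing step is conditional on results that do not exist. The gap is therefore total rather than local, but it is worth naming the two places where your plan definitively fails to close. First, the flag-algebra step: you yourself concede that adding the indegree constraint will not push the threshold to $n/3$, and indeed the best known bound even with both degree constraints available ($0.3388n$ from~\cite{JSV}) is bounded away from $1/3$; there is no evidence that the SDP hierarchy converges to $1/3$ for this problem, so ``redo the computation with more constraints'' is not a proof sketch but a hope. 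Second, the stability step: the analogues of \thmref{thm:kbig_walk} and \thmref{thm:k4walk} that you invoke are proved in the paper only for $k\ge 4$, and their proofs rely crucially on already having a degree bound strictly below the blow-up threshold (via \lemref{lem:girth} and \corref{cor:flags}) to bootstrap the structure --- exactly the ingredient that is missing for $C_3$. Moreover, as you correctly observe, a single-template stability statement is likely \emph{false} for $C_3$-free graphs near semidegree $n/3$, since blow-ups of $C_4,C_5,C_7,\dots$ and Cayley-type constructions are all near-extremal; this is not merely an ``obstacle'' but a structural reason why the paper's $k\ge 4$ machinery (which combines sidewalks modulo a single $k$ via Kneser's theorem) cannot transfer.

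In short: you have correctly diagnosed that the statement is open and why the paper's methods do not reach it, but you have not supplied a proof, and no step of your outline could be completed with currently available tools. If this were submitted as a proof it would have to be rejected outright; as a discussion of the difficulty it is accurate and consistent with the paper's own concluding remarks.
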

Since \conjref{conj:BCW:trg} has the same conjectured set of extremal constructions as the triangle case of the Caccetta--H\"aggkvist conjecture (see, e.g., \cite{Raz13}*{Section 2} for its precise description), it is commonly believed that the difficulty of the two problems is very similar.

Despite the extremal constructions are not exactly the balanced blow-ups of directed cycles, we have established in \thmref{thm:kbig_walk} a stability-type result for the cycle lengths divisible by $12$.
Specifically, every extremal graph in this case is close in the so-called edit distance to a balanced blow-up of $C_k$.
The~\emph{edit distance} of two graphs with the same number of vertices is the number of adjacencies one must alter in one of them to obtain a graph isomorphic to the other one.
Using the terminology of graph limits theory (we refer the reader to the monograph by Lov\'{a}sz~\cite{Lov12} for an excellent exposition to the topic), this translates to the fact that the unique extremal so-called oriented graphon is the one representing the limit of balanced blow-ups of $C_k$.

As we have mentioned in the introduction, when the length of the forbidden cycle is an odd multiple of~$3$, there are infinitely many extremal limits.
Indeed, any Eulerian orientation of a balanced complete bipartite graph is asymptotically extremal in this case.
The fact that these are the only extremal limits follows from \thmref{thm:k4walk}.
However, when forbidding $C_\ell$ for $\ell \equiv 6$ mod $12$ and $\ell \neq 6$, a careful inspection of the arguments presented in \secref{sec:k4} yields the following analogue of \thmref{thm:kbig_walk} for these cycle lengths:

\begin{prop}\label{P:C4stab}
Fix an integer $\ell \ge 18$ that is divisible by $6$ and not divisible by $12$.
Every oriented graph $H$ on $n$ vertices with $\delta^\pm(H) \ge \frac{n}{4} - o(n)$ that contains no closed walk of length~$\ell$ is homomorphic to $C_{4}$.
\end{prop}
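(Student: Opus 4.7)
The plan is to combine \thmref{thm:k4walk} with the intermediate construction used in the proof of \thmref{thm:main} for $k=4$, and then adapt the sidewalk analysis from the proof of \lemref{lem:c4blowup} to the stronger ``no closed walk of length $\ell$'' setting.

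First I would apply \thmref{thm:k4walk} to find a set $T_0 \subseteq V(H)$ with $|T_0| = o(n)$ such that $H - T_0$ is bipartite with bipartition $(L,R)$. Then, exactly as in the intermediate claim of the proof of \thmref{thm:main} for $k=4$, I would pick a path $P$ in $H - T_0$ of length $\ell-3$---which exists since $\ell-3$ is odd (as $\ell$ is even) and $H - T_0$ is bipartite with large minimum semidegree---with source $v_L \in L$ and sink $v_R \in R$, and set $A := N^+_H(v_R) \setminus V(P)$ and $D := N^-_H(v_L) \setminus V(P)$. Since any edge from $A$ to $D$ in $H$ would close a walk of length $\ell$ (namely $v_R \to a \to d \to v_L$ followed by $P$), no such edge exists, and the rest of that construction yields a subgraph $H^\star \subseteq H$ homomorphic to $C_4$ with $\delta^\pm(H^\star) \geq n/4 - o(n)$ and $|V(H^\star)| = n - o(n)$; label its blobs $V_1, V_2, V_3, V_4$ in cyclic order.

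To finish, I would place each vertex of $T := V(H) \setminus V(H^\star)$ uniquely into one of $V_1, V_2, V_3, V_4$ and rule out every edge of $H$ violating the $C_4$-blowup pattern. Both goals are accomplished by the sidewalk analysis from the proof of \lemref{lem:c4blowup}, now applied under the stronger hypothesis: since $\ell \equiv 2 \pmod 4$, the analogues of \claref{cla:k4:noSW2}, \claref{cla:k4:atmost3}, and \claref{cla:k4:cong2} show that no sidewalk through a $T$-vertex can have value $2 \bmod 4$, no two compatible sidewalks can have values summing to $2 \bmod 4$, and no edge of $H$ can go inside a single blob or between non-consecutive blobs. The last point uses a twist specific to closed walks: a single edge $u \to v$ inside a blob $V_i$ already produces a closed walk of length $\ell$ by traversing it twice with intermediate walks around the $C_4$-blowup structure, since $2 + 4(k_1+k_2) = \ell$ is solvable for every $\ell \ge 18$ with $\ell \equiv 2 \pmod 4$. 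Combined, these constraints force every $t \in T$ to have its in-neighbors in a single blob $V_i$ and out-neighbors in $V_{i+2}$, so that $t$ fits uniquely into $V_{i+1}$.

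The main technical obstacle is carrying out the length accounting for the sidewalk arguments in the closed-walk setting, as each subcase in the proof of \lemref{lem:c4blowup} (Claims \ref{cla:k4:noSW2} through \ref{cla:k4:cong1}) was originally formulated to produce a \emph{cycle} of length exactly $\ell$. The closed-walk setting is strictly easier---walks allow vertex and edge repetition---but verifying that each combinatorial configuration produces a closed walk of length exactly $\ell$, rather than some length close to $\ell$, requires a careful subcase-by-subcase check.
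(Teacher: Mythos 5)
Your overall route --- invoke \thmref{thm:k4walk}, rebuild the four blobs via the path of length $\ell-3$ exactly as in the intermediate claim of \secref{sec:k4}, and then absorb the leftover $o(n)$ vertices and kill the non-conforming edges with a sidewalk analysis --- is precisely the ``careful inspection of \secref{sec:k4}'' the paper has in mind, and the first two steps are sound (note only that $A$ and $D$ should be taken as neighbourhoods inside the bipartite graph $H-T_0$ so that $A\subseteq L$ and $D\subseteq R$). The gap is in the absorption step. The analogues of \claref{cla:k4:atmost3} and \claref{cla:k4:cong2} are not available to you: their proofs run through \eqref{eq:sidewalks_bound} with $\delta^\pm(G)\ge\frac{n+2}{4}$, which gives $e(S,T)\ge 2|T|+4$ and hence a guaranteed supply of \emph{additional} sidewalks to combine with; under $\delta^\pm(H)\ge\frac{n}{4}-o(n)$ that inequality yields only $e(S,T)\ge 2|T|-o(n)$, which is vacuous. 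More importantly, the constraints you extract --- no sidewalk of value $2$, and no two \emph{compatible} sidewalks with values summing to $2$ modulo $4$ --- do not force each $t\in T$ into a blob: a vertex $t$ with all in-neighbours in $V_1$ and all out-neighbours in $V_2$ produces only value-$1$ sidewalks, no two of which are compatible (they all share the middle vertex $t$), so your constraints are satisfied even though $t$ cannot be assigned to any blob.

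The repair is the observation you already make for intra-blob edges, applied uniformly: in the closed-walk setting a \emph{single} defect may be traversed twice. A value-$1$ sidewalk $x\to t\to y$ with $x\in V_i$, $y\in V_{i+1}$, followed by a return walk from $y$ to $x$ of length $3+4m_1$ inside the near-complete blow-up, then the same sidewalk and a second return walk, gives a closed walk of length $10+4(m_1+m_2)$; a value-$3$ sidewalk used twice gives $6+4(m_1+m_2)$; a value-$2$ sidewalk used once gives $2+4m$. Each of these hits every $\ell\equiv 2\pmod 4$ with $\ell\ge 18$, so \emph{every} sidewalk of non-zero value is forbidden outright --- no counting, no compatibility, and no second sidewalk is needed, which is exactly why the hypothesis can drop from $\frac{n+2}{4}$ to $\frac{n}{4}-o(n)$. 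With this, every $t\in T$ has all in-neighbours in a single $V_i$ and all out-neighbours in $V_{i+2}$ and is placed in $V_{i+1}$; diagonal and reverse edges inside $V(H^\star)$ are excluded by the same one-or-two-traversals computation, and one must finally check edges between two vertices of $T$ against their assigned blobs (the path $x\to t\to t'\to y$ closes up in the same way). With these changes the argument goes through.
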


In the case when $\ell = 6$, we expect that the extremal graphs are close in the edit distance to a blow-up of~$C_4$ where exactly one of the independent sets might be replaced by a one-way oriented bipartite graph.
Similar to \propref{P:C4stab}, maybe even a stronger statement can be made if one is concerned with closed walks.
\begin{question}\label{Q:l6stab}
Is it true that every oriented graph $H$ on $n$ vertices with $\delta^\pm(H) \ge \frac{n}{4} - o(n)$ that contains no closed walk of length~$6$ is homomorphic to the graph depicted in \figref{fig:C5+}?
\end{question}

\begin{figure}[ht]
{\hfill \includegraphics{figures/figure-16} \hfill}
\caption{The conjectured homomorphic image of all the graphs with no closed walk of length $6$ and asymptotically the largest possible minimum semidegree.}\label{fig:C5+}
\end{figure}


Recently, \thmref{thm:KelKO} was strengthen by Czygrinow, Molla, Nagle, and Oursler~\cite{CzyMNO19}, who proved that it is enough to assume the vertices of $G$ have large outdegree.
\begin{theorem}[\cite{CzyMNO19}]\label{thm:CzyMNO}
For every $\ell \ge 4$ there exists $n_0:=n_0(\ell)$ such that every oriented graph~$G$ on $n\ge n_0$ vertices with $\delta^+(G) \ge \frac{n}{3} + \frac{1}{3}$ contains a directed cycle of length exactly~$\ell$.
\end{theorem}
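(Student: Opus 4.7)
The plan is to mirror the proof of Theorem~\ref{thm:KelKO} while identifying and replacing every step that relies on the minimum indegree. Assume for contradiction that $G$ is $C_\ell$-free with $\delta^+(G) \ge (n+1)/3$. First, I would establish a short-cycle guarantee: since $\delta^+(G) \ge n/3 + 1/3 > 0.251 n$ for large $n$, Corollary~\ref{cor:flags} already forces $C_6 \subseteq G$, and a flag-algebraic refinement in the spirit of~\cite{HlaKN17} plausibly yields the presence of $C_3$, $C_4$ or $C_5$ as well. This gives a seed cycle that we want to use to build $C_\ell$.

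Next I would split into cases based on whether $G$ contains a directed triangle $T = xyz$. In the triangle case, I would try to upgrade $T$ to a $C_\ell$ by replacing the edge $xy$ with a directed path from $x$ to $y$ of length exactly $\ell-2$ avoiding $z$. Define $S_i$ as the set of vertices reachable from $x$ by a path of length $i$ avoiding $z$; the outdegree bound $\delta^+ \ge n/3 + 1/3$ drives iterative growth of $|S_i|$ past $n/2$ in $O(1)$ steps, and a dual construction (applied to an auxiliary reversed graph, which inherits an average outdegree bound, together with local pruning to restore a minimum outdegree condition) produces vertices $R_j$ from which $y$ is reachable in $j$ steps --- crucially \emph{without} appealing to the indegree of $y$. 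Intersecting $S_i$ and $R_j$ for the right range of $i+j$ yields paths of a whole interval of lengths, and a Frobenius-type argument as in Claim~\ref{cla:shortcuts} then pinpoints length exactly $\ell-2$. In the triangle-free case, I would use Corollary~\ref{cor:flags} together with a stability analysis in the style of Theorem~\ref{thm:kbig_walk} to show $G$ lies close in edit distance to a balanced blow-up of $C_k$ for some $k \in \{4,5\}$, and then build $C_\ell$ by combining the cyclic structure with ``extra'' edges using Lemma~\ref{lem:additive}.

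The main obstacle is that the Caccetta-H\"aggkvist conjecture for $\ell=3$ is itself open, so the triangle-free case cannot be dismissed by invoking $C_3 \subseteq G$ from the outdegree hypothesis. Handling it requires either a structural argument robust enough to proceed without a triangle, or an indirect proof that $C_3$-freeness together with $\delta^+(G) \ge n/3 + 1/3$ forces a near-blow-up structure of $C_k$ for some $k \ge 4$ and then uses the surplus $1/3$ over the asymptotic CH threshold to force ``extra'' edges. A secondary obstacle is that, without an indegree bound, the set $D$ of vertices with $\delta^-(v) \ll n/3$ can be large; however, the identity $\sum d^+(v) = \sum d^-(v) \ge n(n/3+1/3)$ concentrates the deficit, so vertices in $D$ must appear in restricted positions relative to any candidate near-blob partition of $V(G)$, and short-cycle forbiddance (via shortcut arguments analogous to Claim~\ref{cla:shortcuts}) should force these vertices into a configuration that still supplies the sidewalk-type transitions needed to assemble a $C_\ell$.
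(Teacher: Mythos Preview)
The paper does not prove \thmref{thm:CzyMNO}; it is quoted in the concluding remarks as a result of Czygrinow, Molla, Nagle and Oursler~\cite{CzyMNO19}, with no proof given here. There is therefore nothing in the present paper to compare your proposal against.

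That said, as a standalone sketch your proposal has real gaps. The most serious is the ``dual construction'' in the triangle case: to build the sets $R_j$ of vertices from which $y$ is reachable in $j$ steps, you need control over \emph{in}-neighbourhoods, and reversing all edges turns the hypothesis $\delta^+(G)\ge (n+1)/3$ into a useless $\delta^-$ bound. Saying that the reversed graph ``inherits an average outdegree bound'' and that ``local pruning'' restores a minimum condition is where the argument dies: pruning low-outdegree vertices in the reversed graph can remove $y$ itself or all of $N^-(y)$, and there is no mechanism here preventing that. Likewise, in the triangle-free case you invoke a stability result ``in the style of \thmref{thm:kbig_walk}'', but that theorem uses both directions of the semidegree bound in an essential way (see the proof of \claref{cla:diameter}, which alternately grows out-neighbourhoods from $x$ and in-neighbourhoods toward $y$); without the indegree hypothesis the diameter bound collapses and with it the shortcut arguments. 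Your final paragraph acknowledges this obstacle but does not resolve it: the counting identity $\sum d^+ = \sum d^-$ only controls the \emph{average} indegree, and the set of low-indegree vertices can still be a constant fraction of $V(G)$, which is far too large for the sidewalk machinery of \secref{sec:kbig} or \secref{sec:k4} to absorb.
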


Motivated by the relations between \thmref{thm:KelKO} and \thmref{thm:CzyMNO} as well as \conjref{conj:BCW:trg} and the triangle-case of the Caccetta--H\"aggkvist conjecture, it might be tempting to conjecture that the outdegree threshold of~$C_\ell$ is the same as the semidegree threshold of $C_\ell$ for every length $\ell$.
However, this is not true and for most cycle lengths the corresponding outdegree threshold is strictly larger than the semidegree one.

\begin{figure}[ht]
{\hfill
\includegraphics{figures/figure-6} \hfill
\includegraphics{figures/figure-5}
\hfill}
\caption{Maneuvers used for the constructions assuming only the minimum outdegree. In both maneuvers, by adding presented edges to an arbitrarily chosen vertex in one blob, one can remove a vertex in the topmost blob.}\label{fig:maneuvers_outdegree}
\end{figure}

For an example, fix an integer $\ell$ divisible by $3$ such that the smallest positive integer~$k$ that does not divide~$\ell$ is odd, and consider a balanced blow-up of a $k$-cycle on $n+k-2$ vertices.
Clearly, its minimum outdegree is equal to $\frac{n}{k} + \frac{k-2}{k}$.
If $\ell<k/2$, then using $2\ell-1$ times the first maneuver and $k-2\ell-1$ times the second maneuver from \figref{fig:maneuvers_outdegree}, we remove $k-2$ vertices without changing the minimum outdegree.
Each use of the first maneuver is changing by $\frac{k+1}{2}$ the remainders modulo~$k$ of the cycle lengths that can be obtained, while each usage of the second maneuver is changing those remainders by $\frac{k-1}{2}$.
There is no $\ell$-cycle, since it is impossible to obtain $\ell$ mod $k$ in such a way.
Similarly, for the case $\ell>k/2$, we can use $2\ell-k-1$ times the first maneuver and $2k-2\ell-1$ times the second maneuver from \figref{fig:maneuvers_outdegree}.
Again, we removed $k-2$ vertices without changing the minimum outdegree. 
For $k\ge8$ even, we can provide a similar construction by considering maneuvers that are changing the remainders modulo~$k$ of any cycle length that can be obtained by $3$ and $k-3$ only.  

As we have just observed, the outdegree and semidegree thresholds of cycles cannot be the same for infinitely many cycle lengths. However, we still conjecture that they differ by less than one.

\begin{conjecture}\label{conj:outdeg}
Fix an integer $\ell \ge 4$ and let $k$ be the smallest integer greater than $2$ that does not divide $\ell$.
There exists $n_0:=n_0(\ell)$ such that every oriented graph $G$ on $n\ge n_0$ vertices with $\delta^+(G) \geq \frac{n}{k} + 1$ contains~$C_\ell$.
\end{conjecture}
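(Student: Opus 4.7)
The plan is to follow the proof of \thmref{thm:main} section by section, replacing each use of $\delta^\pm$ by an argument that uses only $\delta^+$, and then to cash in the extra ``$+1$'' in the hypothesis at the very end. A useful observation up front is that all the Caccetta–H\"aggkvist--type inputs we rely on -- \lemref{lem:girth}, \corref{cor:flags}, \thmref{thm:CH-C3}, and the Kneser/\corref{cor:additive} machinery -- already take only an outdegree hypothesis, so the ``short cycle'' building blocks transfer for free. What must be reworked are the two stability theorems (\thmref{thm:kbig_walk} and \thmref{thm:k4walk}) and the sidewalk-combination lemmas (\lemref{lem:kbig_sidewalks} and \lemref{lem:c4blowup}).

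The first main step is an outdegree analogue of \thmref{thm:kbig_walk}: if $\delta^+(H)\ge\frac{n}{k}(1-\frac{1}{ck})$ for some suitably large constant $c$ and $H$ has no closed walk of length~$\ell$ (with $k\ge5$), then $H$ is homomorphic to $C_k$. The diameter bound in \claref{cla:diameter} and the shortcut-forbidding \claref{cla:shortcuts} generalize directly, since both iterate outneighborhoods only. The delicate part is the partition argument at the end: currently one takes a $k$-cycle $y_1\cdots y_k$ and sets $Y_i:=N^+(y_i)$, but the proof that the $Y_i$ exhaust $V(H)$ up to $o(n)$ vertices and that every ``trash'' vertex gets assigned to a unique $Y_i$ uses the indegree hypothesis to control inneighbors of $y_i$. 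To replace this, I would instead build the partition purely via forward reachability: using the outdegree bound, iterate $N^+$ from any vertex to cover more than $(1-o(1))n$ vertices within $k-1$ steps, and use \claref{cla:shortcuts} to show the reachable set stratifies into $k$ layers of nearly equal size. The analogous reworking of \thmref{thm:k4walk} for $k=4$ requires replacing the use of Andr\'asfai--Erd\H{o}s--S\'os by a one-sided bipartite stability result, which should be available from the same outdegree-only flag algebra calculations used in \corref{cor:flags}.

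The second main step is to recover the exact threshold $\delta^+(G)\ge\frac{n}{k}+1$ from the approximate structure. Once one has, modulo $o(n)$ vertices, a partition into blobs $X_1,\dots,X_k$ following the cyclic order of $C_k$, I would run the sidewalk arguments of Sections~\ref{sec:kbig} and~\ref{sec:k4} essentially as written, replacing the inequality $\sum_{v\in S}(|N^+(v)|+|N^-(v)|)\ge 2k\delta^\pm(G)$ for a blob-transversal $S$ by the one-sided $\sum_{v\in S}|N^+(v)|\ge k\delta^+(G)\ge n+k$. The latter is already enough to force, via a pigeonhole on the trash set~$T$, a vertex in $T$ with several outneighbors across distinct non-consecutive blobs, producing the nonzero-value sidewalk needed for \corref{cor:additive}. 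The precise arithmetic of the ``$+1$'' slack should then match the outdegree constructions described in the concluding section (the maneuvers of \figref{fig:maneuvers_outdegree}), confirming tightness.

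The main obstacle I expect is step one, and specifically the part of the stability argument that currently forces incoming edges to respect the blob structure. With only an outdegree hypothesis, a trash vertex $t\in T$ may have essentially arbitrary indegree distribution across the blobs, and the current proof of ``$N^-(v)\subseteq Y_{i-1}\cup T$ for every $v\in Y_i$'' breaks. Repairing this likely requires either a genuinely asymmetric extension of \thmref{thm:kbig_walk} -- perhaps proved by applying the outdegree-only arguments inside the auxiliary graph $G_m$ from \lemref{lem:girth}, whose outdegree inherits cleanly -- or, as a more modest first target, proving \conjref{conj:outdeg} under the strictly stronger hypothesis $\delta^+(G)\ge\frac{n}{k}+\eps n$ (the outdegree analogue of \thmref{thm:otherorientations}) and only afterwards sharpening the additive error term down to~$1$ using the explicit extremal constructions.
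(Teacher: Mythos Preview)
The statement you are attempting to prove is \conjref{conj:outdeg}, which the paper states as an \emph{open conjecture} in the concluding remarks; there is no proof in the paper to compare against. So the relevant question is simply whether your proposal constitutes a proof, and it does not --- as you yourself acknowledge, it is a research plan with an identified obstacle, not a complete argument.

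Beyond the obstacle you flag, there is an earlier gap. You assert that \claref{cla:diameter} and \claref{cla:shortcuts} ``generalize directly, since both iterate outneighborhoods only.'' This is not correct: the proof of \claref{cla:diameter} iterates outneighborhoods from $x$ to reach more than half the vertices, but then \emph{symmetrically iterates inneighborhoods from $y$}, using the minimum indegree bound, to get paths of bounded length to $y$ from more than half the vertices. Only the overlap of these two halves gives the diameter bound. With only $\delta^+$, you can reach many vertices from $x$, but you have no control over which vertices can reach $y$ along short paths, so the diameter claim fails as stated. Since \claref{cla:shortcuts} invokes the diameter bound to close up an $s$-shortcut into two short cycles, it too breaks. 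The same issue recurs in \claref{cla:diam6}, \claref{cla:diam5}, and \claref{cla:diam5l6}. Repairing this is not a matter of bookkeeping: without an indegree hypothesis, a vertex could have indegree zero, and there is no path to it at all.

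Your fallback suggestion --- first prove the asymptotic outdegree version $\delta^+(G)\ge\frac{n}{k}+\eps n$ --- is a reasonable research direction, but note that even this is not known and is not implied by anything in the paper.
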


\section*{Acknowledgements}

We thank Roman Glebov for our joint preliminary work on the $C_6$ case using a different approach and P\'eter P\'al Pach for suggesting the usage of Kneser's theorem in the proof of \lemref{lem:additive}. We also thank the anonymous referee for their valuable comments,
which greatly improved the presentation of our results.

\end{document}